\newtheorem{theorem}{Theorem}
\newtheorem{lemma}{Lemma}
\journal{Journal of Computational Physics}
\begin{document}

\begin{frontmatter}

%% Title, authors and addresses

%% use the tnoteref command within \title for footnotes;
%% use the tnotetext command for theassociated footnote;
%% use the fnref command within \author or \address for footnotes;
%% use the fntext command for theassociated footnote;
%% use the corref command within \author for corresponding author footnotes;
%% use the cortext command for theassociated footnote;
%% use the ead command for the email address,
%% and the form \ead[url] for the home page:
%% \title{Title\tnoteref{label1}}
%% \tnotetext[label1]{}
%% \author{Name\corref{cor1}\fnref{label2}}
%% \ead{email address}
%% \ead[url]{home page}
%% \fntext[label2]{}
%% \cortext[cor1]{}
%% \address{Address\fnref{label3}}
%% \fntext[label3]{}

\title{Scheduled Relaxation Jacobi method: improvements and applications}

%% use optional labels to link authors explicitly to addresses:
%% \author[label1,label2]{}
%% \address[label1]{}
%% \address[label2]{}

\author{J.E.~Adsuara\corref{cor1}\fnref{label1}} 
\author{I.~Cordero-Carri\'on\corref{cor1}\fnref{label2}}
\author{P.~Cerd\'a-Dur\'an\corref{cor1}\fnref{label1}}
\author{M.A.~Aloy\corref{cor1}\fnref{label1}}
\cortext[cor1]{jose.adsuara@uv.es,isabel.cordero@uv.es, pablo.cerda@uv.es, miguel.a.aloy@uv.es}
\address[label1]{Departamento de Astronom\'{\i}a y Astrof\'{\i}sica, Universidad de Valencia, E-46100, Burjassot, Spain.}
\address[label2]{Departamento de Matem\'atica Aplicada, Universidad de Valencia, E-46100, Burjassot, Spain.}

\begin{abstract}
  Elliptic partial differential equations (ePDEs) appear in a wide variety of areas of mathematics, physics and
  engineering. Typically, ePDEs must be solved numerically, which sets an ever growing demand for efficient and highly
  parallel algorithms to tackle their computational solution. The Scheduled Relaxation Jacobi (SRJ) is a promising class
  of methods, atypical for combining simplicity and efficiency, that has been recently introduced for solving linear
  Poisson-like ePDEs. The SRJ methodology relies on computing the appropriate parameters of a multilevel approach with
  the goal of minimizing the number of iterations needed to cut down the residuals below specified tolerances. The
  efficiency in the reduction of the residual increases with the number of levels employed in the algorithm. Applying
  the original methodology to compute the algorithm parameters with more than 5 levels notably hinders obtaining
  optimal SRJ schemes, as the mixed (non-linear) algebraic-differential equations from which they result become
  notably stiff. Here we present a new methodology for obtaining the parameters of SRJ schemes that overcomes the
  limitations of the original algorithm and provide parameters for SRJ schemes with up to 15 levels and resolutions of
  up to $2^{15}$ points per dimension, allowing for acceleration factors larger than several hundreds with respect to
  the Jacobi method for typical resolutions and, in some high resolution cases, close to 1000.  Furthermore, we extend
  the original algorithm to apply it to certain systems of non-linear ePDEs.
\end{abstract}

\begin{keyword}
%% keywords here, in the form: keyword \sep keyword
Iterative method\sep Jacobi method\sep Finite differences method\sep Elliptic equations.

%% PACS codes here, in the form: \PACS code \sep code

%% MSC codes here, in the form: \MSC code \sep code
%% or \MSC[2008] code \sep code (2000 is the default)

\end{keyword}

\end{frontmatter}

%%\linenumbers

%% main text
%%%%%%%%%%%%%%%%%%%%%%%%%%%%%%%%%%%%%%%%%%%%%%%%%%%%%%%%%%%%%%%%%%%%%
\section{Introduction}
%%%%%%%%%%%%%%%%%%%%%%%%%%%%%%%%%%%%%%%%%%%%%%%%%%%%%%%%%%%%%%%%%%%%%

Partial differential equations (PDEs) are the appropriate mathematical language for modeling many phenomena
\cite{Evans2010}. In particular, we are interested in elliptic PDEs (ePDEs), that arise when we face the solution of
equilibrium problems, in which the time evolution of the system is either neglected or irrelevant.
Poisson and Laplace equations are prototype second order ePDEs,
with and without source terms respectively.

Though the aforementioned Poisson and Laplace equations posses analytic solutions in a limited number of simple cases,
we usually need a numerical solution when more general problems are considered. One of the standard approaches for
solving these equations numerically is using finite differences methods. In this approach, both functions and operators
are discretized on a numerical mesh, leading to a system of linear algebraic equations, which can be solved with direct
or iterative methods. One of the simplest and most studied iterative schemes is the so called Jacobi method
\cite{Jacobi1845,Richardson11}, whose main drawback is its poor convergence rate.

In oder to improve the efficiency of the Jacobi method, many alternatives have been considered. A popular possibility is
the use of preconditioners \cite{JM60,GJS91,NT06} applied to linear systems, that make the associated Jacobi and
Gauss-Seidel methods converge asymptotically faster than the unpreconditioned ones. Indeed, the method we improve on
here, can be adapted as a preconditioner for other methods (e.g., the conjugate gradient method). Very widespread is the use of
multigrid methods \cite[e.g.,\,][]{TOS01} that, in many cases, provide the solution with ${\cal O}(N)$ operations, or
that can even be employed as preconditioners. Relaxation algorithms \citep[originally introduced in][]{Richardson11},
improve the performance of the Jacobi method by considering modifications of the Gauss-Seidel algorithm that include a
weight, for instance, successive overrelaxation (SOR) methods \cite{Young1954}.

Along this line, \cite[][YM14 henceforth]{Yang2014} have recently presented a significant acceleration (of the order of
100) over the Jacobi algorithm, employing the Scheduled Relaxation Jacobi (SRJ) method.  The SRJ method is a
generalization of the weighted Jacobi method which adds an overrelaxation factor to the classical Jacobi in a similar
fashion to the SOR. This generalization includes a number $P$ of different levels, in each of which, the overrelaxation
(or underrelaxation) parameter or weight is tuned to achieve a significant reduction of the number of iterations, thus
leading to a faster convergence rate.  The optimal set of weights depends on the actual discretization of the problem at
hand. Although the method greatly improves the convergence rate with respect to the original Jacobi, the schemes
presented by YM14, optimal up to $P=5$ and resolutions of up to 512 points per spatial dimension, are still not
competitive with other methods used currently in the field (e.g., spectral methods \cite{BW86}, or multigrid methods as
commented above). The main advantage of the SRJ method over other alternatives to solve numerically ePDEs is its
simplicity and the straightforward parallelization, since SRJ methods preserve the insensitivity of the Jacobi method to
domain decomposition (in contrast, e.g., to multigrid methods).

Following basically the same procedure as in YM14, \cite[][ACCA15 henceforth]{Adsuetal15} has obtained optimal SRJ
algorithms with up to $P=10$ levels and multiple numerical resolutions. However, the limitations of the methodology of
YM14 to compute optimal parameters for multilevel SRJ schemes prevents to develop algorithms with more than 10
levels. In this paper, we will show a new methodology to evaluate the parameters of optimal SRJ schemes with up to
$P=15$ levels and resolutions of up to $2^{15}$ points per spatial dimension of the problem, which in some cases may
yield accelerations of order $10^3$ with respect to the Jacobi method. Considering the difficulty in obtaining the SRJ
parameters for $P>5$, we provide the readers with a comprehensive set of tables for different SRJ schemes and different
resolutions.

We begin the paper giving an overview of the SRJ method (Sect.~\ref{sec:SRJ}) and describing the original methodology
for obtaining optimal schemes together with the improvements on them already made in ACCA15
(Sect.~\ref{sec:optimalPars}).  Then, we will present in Sect.~\ref{subsec:analyticalManipulations}, some analytical
work which reduces the number of unknowns to solve for to ${\cal O}(P)$ (instead of ${\cal O}(P^2)$ as in YM14 and
ACCA15). In Sect.~\ref{sec:results} we show a comparison of the new method to compute optimal parameters for SRJ schemes
with that of YM14. Furthermore, we test the SRJ methods in a case study, namely a Poisson equation with Dirichlet
boundary conditions (Sect.~\ref{subsec:casestudy}) that has analytic solution, and show that optimal SRJ parameters
computed for resolutions close to that of the problem at hand can bring two orders of magnitude smaller number of
iterations than the Jacobi method to solve such the problem. We have also assessed the performance of the new SRJ
schemes with a large number of sublevels with respect to other standard methods to solve ePDEs
(Sect.~\ref{subsec:spherical}). In particular, we compare SRJ schemes with $P=6$ and $P=15$ to direct inversion methods
and to spectral methods implemented in the {\tt LAPACK} and {\tt LORENE} packages, respectively. We outline the most
prominent conclusions of our study and discuss the limitations of the current methodology in
Sect.~\ref{sec:conclusions}.

%%%%%%%%%%%%%%%%%%%%%%%%%%%%%%%%%%%%%%%%%%%%%%%%%%%%%%%%%%%%%%%%%%%%%
\section{SRJ schemes}
\label{sec:SRJ}
%%%%%%%%%%%%%%%%%%%%%%%%%%%%%%%%%%%%%%%%%%%%%%%%%%%%%%%%%%%%%%%%%%%%%

In this section we recap the most salient results obtained by YM14 and set the notation for the rest of the
paper. 

First of all, if we define $\omega_iJ$ as a single step in a weighted Jacobi iteration using the weight $\omega_i$
($i=1,\ldots,P$), then the SRJ method can be cast as a successive application of elementary relaxation steps of the form
\begin{eqnarray*}
\overbrace{ \overbrace{ \omega_1 J \ldots \omega_1 J}^{q_1} \underbrace{ \omega_2 J \ldots \omega_2 J}_{q_2} \ldots
  \overbrace{ \omega_P J \ldots \omega_P J}^{q_P} }^M \overbrace{ \omega_1 J \ldots \omega_1 J}^{q_1} \ldots \, ,
\end{eqnarray*}
where the largest weight, $\omega_1$ is applied $q_1$ times, and each of the remaining and progressively smaller weights
$\omega_i$ ($i=1,\ldots, P$) is applied $q_i$ times, respectively. A single cycle of the scheme ends after $M$
elementary steps, where $M:=\sum_{i=1}^P q_i$. In order to reach a prescribed tolerance goal, we need to repeat a number
times the basic $M$-cycle of the SRJ method. Both, a vector of weights and a vector with the number of times we use each
weight, define each optimal scheme. We emphasize that, from the point of view of the implementation, the only difference
with the traditional weighted Jacobi is that, instead of having a fixed weight, SRJ schemes of $P$-levels require the
computation of $P$ weights.

In order to simplify the notation, we define $\boldsymbol{\omega} := (\omega_1,\ldots,\omega_P)$, with $\omega_{i} >
\omega_{i+1}$ and $\boldsymbol{q} := (q_1,\ldots,q_P)$ which is in one-to-one correspondence with the previous
$\boldsymbol{\omega}$. Also, we define $\boldsymbol{\beta} := (\beta_1,\ldots,\beta_P)$, where $\beta_i = q_i/ M$ is
the fraction of the iteration counts that a given weight $\omega_i$ is repeated in an $M$-cycle, with $\beta_P := 1 -
\sum_{i=1}^{P-1}\beta_i$.

The basic idea of the SRJ schemes is finding the optimal values for $\boldsymbol{\omega}$ and $\boldsymbol{\beta} $ that
minimize the total number of iterations to reach a prescribed tolerance for a given number of points (i.e., numerical
resolution) $N$.

%%%%%%%%%%%%%%%%%%%%%%%%%%%%%%%%%%%%%%%%%%%%%%%%%%%%%%%%%%%%%%%%%%%%%
\section{Finding the optimal parameters}
\label{sec:optimalPars}
%%%%%%%%%%%%%%%%%%%%%%%%%%%%%%%%%%%%%%%%%%%%%%%%%%%%%%%%%%%%%%%%%%%%%

Below we explain how to compute the optimal values of $\boldsymbol{\omega}$ and $\boldsymbol{\beta}$, following the
prescription of YM14 and rewrite some parts of the YM14 algorithm to make them amenable for extension to a larger number
of levels and resolutions.

\subsection{Converge analysis and optimization problem}
\label{sec:convergeAnalysis}

We perform a convergence analysis of the method in order to obtain a number of restrictions that the parameters of the
SRJ scheme must fulfill. As a model problem, we use the Laplace equation with homogeneous Neumann boundary conditions in
two spatial dimensions, in Cartesian coordinates and over a domain with unitary size:
\begin{equation}
\begin{cases}
\displaystyle\frac{\partial^2}{\partial x^2} u(x,y) + \frac{\partial^2}{\partial y^2} u(x,y) = 0, & (x,y) \in (0,1) \times (0,1) \\*[0.3cm]
\displaystyle\left.\frac{\partial}{\partial x}\right|_{x=0} u(x,y) = 0,\,  \left.\frac{\partial}{\partial x}\right|_{x=1} u(x,y)  = 0, & y \in (0,1) \\*[0.4cm]
\displaystyle\left.\frac{\partial}{\partial y}\right|_{y=0} u(x,y) = 0,\,  \left.\frac{\partial}{\partial y}\right|_{y=1} u(x,y)  = 0, & x \in (0,1). \\
\end{cases}
\label{eq:Laplace}
\end{equation}

Let us consider a 2nd-order central-difference discretization of Eq.\,(\ref{eq:Laplace}) on a uniform grid consisting of $N_x\times N_y$ zones, and define $N={\rm max}(N_x,N_y)$. Then, we apply the Jacobi method with a relaxation parameter $\omega$, so that the following iterative scheme results:
\begin{eqnarray}
u^{n+1}_{i,j} & = &  (1-\omega)u^n_{i,j}+\frac{\omega}{4}(u^n_{i,j-1}+u^n_{i,j+1}+u^n_{i-1,j}+u^n_{i+1,j}) \\
& = &  u^n_{i,j}+\frac{\omega}{4}(u^n_{i,j-1}+u^n_{i,j+1}+u^n_{i-1,j}+u^n_{i+1,j} - 4u^n_{ij}),
\label{eq:2nd-order_CD}
\end{eqnarray}
where $n$ is the index of iteration. 

At this point, we perform a von Neumann stability analysis for obtaining the amplification factor,
\begin{equation}
G_{\omega}(\kappa) = (1- \omega \kappa), 
\end{equation}
where $\kappa$ is a function of the wave-numbers in each dimension. For the problem at hand, 
\begin{equation}
\kappa(k_x,k_y) = \sin^2 \left(\frac{k_x}{2N_x}\right) + \sin^2 \left(\frac{k_y}{2N_y}\right)\, .
\end{equation}

$G_\omega$ expresses how much the error can grow up from one iteration to the next one using the relaxation Jacobi
method. Thus, if a single relaxation step is performed, we require $|G_\omega|<1$ to ensure convergence. However, in an
SRJ scheme, we perform a series of $M$-cycles (Sect.~\ref{sec:SRJ}). Hence, even if on an elementary step of the
algorithm one may violate the condition $|G_\omega|<1$ (which may happen, e.g., if such step is an overrelaxation of the
Jacobi method), the condition for convergence shall be obtained for the composition of $M$ elementary amplification
factors. As we apply Eq.~(\ref{eq:2nd-order_CD}) $M$-times but with $P$ different weights $\omega_i$, the following
composition of amplifications factors is obtained:
\begin{eqnarray}
\overbrace{ \overbrace{ G_{\omega_1} \ldots G_{\omega_1} }^{q_1} \underbrace{ G_{\omega_2} \ldots G_{\omega_2}}_{q_2}
  \ldots \overbrace{ G_{\omega_P} \ldots G_{\omega_P}}^{q_P} }^M = \prod_{i=1}^{P} G_{\omega_i}^{\phantom{\omega_i}q_i}.
\label{eq:cycleAmplFact}
\end{eqnarray}

Finally, it is not important how many times we use each of the weights $q_i$ but their relative frequency of use during
an $M$-cycle, which is defined by $\beta_i$. Therefore, following YM14, one can define the per-iteration amplification
factor function as a geometric mean of the modulus of the cycle amplification factor (Eq.\,\ref{eq:cycleAmplFact}):
\begin{equation}
\Gamma(\kappa) = \prod_{i=1}^{P} |1-\omega_i \kappa|^{\beta_i} .
\label{eq:Gamma}
\end{equation}

The previous transformation is very convenient to find deterministic optimal parameters for the SRJ schemes, since it
avoids working with a Diophantine equation (Eq.\,\ref{eq:cycleAmplFact}), because $q_i \in \mathbb{N}$, while $\beta_i \in \mathbb{R}$.

From the definition of $\Gamma(\kappa)$, it is evident that larger values of the per-iteration amplification factor
yield a larger number of iterations for the algorithm to converge. Thus, the optimal values for the SRJ parameters are
obtained by looking for the extrema of $\Gamma(\kappa)$ in $[\kappa_m, \kappa_M]$, which is the interval bounding the
allowed values of $\kappa$, namely
\begin{eqnarray}
\kappa_m = \sin^2 \left(\frac{\pi}{2 N}\right),  & \kappa_M = 2\, ,
\label{eq:boundaries}
\end{eqnarray}
and then, to minimize {\em globally} these extrema, so that the error per iteration decreases as much as possible. This
sets our optimization problem.

We explicitly point out that the value of $\kappa_m$ depends on the type of boundary conditions of the problem, on
  the discretization of the elliptic operator and on the dimensionality of the problem. This is not the case for
  $\kappa_M$, which equals 2 independent on the boundary conditions, dimensionality and discretization of the elliptic
  operator. For later reference, we write the explicit form of $\kappa_m$ as a function of the number of dimensions,
  $d$, for a Cartesian discretization of the elliptic operator an Neumann boundary conditions:
\begin{eqnarray}
\kappa_m^{\rm (d)} = \displaystyle\frac{2}{d}\sin^2 \left(\frac{\pi}{2 N^{\rm (d)}}\right) .
\label{eq:boundaries1D3D}
\end{eqnarray}
Obviously, we recover Eq.\,(\ref{eq:boundaries}) setting $d=2$. 
For practical purposes, it is possible to obtain the optimal SRJ parameters for any value of $d$ once we know the
optimal parameters in 2D. This is done by computing the effective number of points in 2D, $N_{\rm eff}^{\rm (2)}$, corresponding to a given
problem size $N^{\rm (d)}$ in $d$-dimensions through the relation:
\begin{eqnarray}
N_{\rm eff}^{\rm (2)} = \frac{\pi}{2\arcsin{ \left(\displaystyle\sqrt{\frac{2}{d}}
      \sin{\left(\frac{\pi}{2N^{\rm (d)}}\right)}\right)} } \simeq N^{\rm (d)}\sqrt{ \frac{d}{2} } ,
\label{eq:effectiveN}
\end{eqnarray}
where the approximated result holds for large values of $N^{\rm (d)}$.

Finally, as stated above, the values of $\kappa_m$ change depending on whether Neumann or Dirichlet boundary conditions
are considered, and so the optimal parameters change. Fortunately, there is a simple way to obtain the optimal
parameters in case of Dirichlet boundary conditions from the 2D optimal parameters computed for Neumann boundaries,
namely
\begin{eqnarray}
N_{\rm eff}^{\rm (2)} = \frac{\pi}{2\arcsin{ \left(\displaystyle\sqrt{\frac{2}{d}
 \sum_{i=1}^d \sin^2{\left(\frac{\pi}{2N^{\rm (d)}_{i,{\rm Dirichlet}}}\right)}}\right)} } \simeq \frac{\displaystyle\sqrt{\frac{d}{2} }}{\sqrt{
    \displaystyle\sum_{i=1}^d {\frac{1}{(N^{\rm (d)}_{i,{\rm Dirichlet}})^{2}}}}} ,
\label{eq:effectiveNDirichlet}
\end{eqnarray}
where the approximated result holds when the number of points in each dimension is sufficiently large.

\subsection{The non-linear system}
\label{subsec:NonLinSys}

In the optimization problem stated in the previous section we need to compute the location of the extrema
$\Gamma(\kappa)$, $\boldsymbol{\kappa}$ hereafter. From the optimization process one must also obtain the rest of the
parameters of the SRJ scheme, namely, $\boldsymbol{\omega}$ and $\boldsymbol{\beta}$. Thus, we
need to solve a system ${\cal S}(\boldsymbol{\omega}, \boldsymbol{\beta}, \boldsymbol{\kappa})$ to determine all these
unknowns.

For the evaluation of $\boldsymbol{\kappa}$, we must take into account that the location of the maxima can be either at
the edges of the domain, $\kappa_0:=\kappa_m$ and $\kappa_P:=\kappa_M$ (set by Eq.\,\ref{eq:boundaries}), or in other
$P-1$ internal values $\kappa_i$ ($i=1,\ldots,P-1$) determined (each of them) by the following condition:
\begin{equation}
 \frac{\partial}{\partial \kappa} \log \Gamma(\kappa)  = \sum_{i=1}^{P} \frac{\beta_i \omega_i}{1 - \kappa \omega_i} = 0.
\label{eq:partialG}
\end{equation}
From the solutions of Eq.\,(\ref{eq:partialG}), we obtain the $P-1$ different $\kappa_i = \kappa_i(\boldsymbol{\omega},
\boldsymbol{\beta})$, which allows us to reduce the number of unknowns of the system ${\cal S}(\boldsymbol{\omega},
\boldsymbol{\beta},\boldsymbol{\kappa})$ from $3P-2$ $(\omega_1,\ldots,\omega_P,\beta_1,\ldots,\beta_{P-1},\kappa_1,\ldots,\kappa_{P-1})$
to $2P-1$. Hence, we need to also obtain $2P-1$ equations to find a unique solution of the system.

For obtaining the set of the first $P$ equations for the system, and following YM14, we equalize all the maxima:
\begin{equation}
\Gamma(\kappa_0) = \Gamma(\kappa_{i}), \,\, i=1,\ldots,P 
\label{eq:equMax}.
\end{equation}
Furthermore, if we assume that $\boldsymbol{\omega}=\boldsymbol{\omega}(\boldsymbol{\beta})$, and therefore 
$\boldsymbol{\kappa}=\boldsymbol{\kappa}(\boldsymbol{\beta})$, a second set of $P-1$ equations can be obtained from the minimization of $\Gamma(\kappa_0)$:
\begin{equation}
\frac{\partial}{\partial \beta_j}\Gamma(\kappa_0)=0, \: j=1,\ldots
P-1.
\label{eq:partialbeta} 
\end{equation}
Thereby, our system is now 
$S(\boldsymbol{\omega}, \boldsymbol{\beta},\frac{\partial \boldsymbol{\omega}}{\partial \boldsymbol{\beta}})$, 
since the differentiation in Eq.\,(\ref{eq:partialbeta}) introduces $P(P-1)$ new ancillary variables, 
namely $\frac{\partial \omega_i}{\partial \beta_j}; \,i=1,\ldots,P,\,j=1,\ldots, P-1$. The final set of
$P(P-1)$ additional equations to account for this extra ancillary variables results from applying the same 
condition as in Eq.\,(\ref{eq:partialbeta}) to the remaining values of $\kappa_i$, deduced from 
Eqs.~(\ref{eq:equMax}) and (\ref{eq:partialbeta}):
\begin{equation}
0=\frac{\partial}{\partial \beta_j}\Gamma(\kappa_i),\:  i=1,\ldots
P,\: j=1\ldots P-1.
\label{eq:equDifBeta}
\end{equation}
We note that ACCA15 have shown that setting the
optimization problem in terms of $\boldsymbol{\omega}$ as reference
variables and considering $\boldsymbol{\beta}$ as function of the
later ($\boldsymbol{\beta}(\boldsymbol{\omega})$) brings the same
numerical solution for the system ${\cal S}(\boldsymbol{\omega},
\boldsymbol{\beta},\boldsymbol{\kappa})$.

\subsection{Basic improvements on the original SRJ algorithm}
\label{subsec:preImpVar}

In order to make the SRJ method competitive with other existing algorithms to solve ePDEs, we must find the optimal
parameters of SRJ schemes with a {\em sufficiently} large number of levels. Furthermore, since the optimal parameters depend on the
resolution of the discretization used to solve a given problem, we also need to compute optimal parameters for a range of
numerical resolutions larger than in YM14.  In this section we summarize some improvements done in ACCA15, which allow us
to solve the system from $P=6$ up to $P=10$ and for resolutions up to $2^{15}$. Additional improvements based on
analytical results will be commented in the following subsection.

Firstly, the stiffness of $S(\boldsymbol{\omega}, \boldsymbol{\beta}, \frac{\partial \boldsymbol{\omega}}{\partial
  \boldsymbol{\beta}})$ increases with the number of levels $P$, which prevented YM14 to compute optimal SRJ schemes for
$P>5$. We have been able to reduce the complexity of the numerical solution by manipulating parts of them
algebraically. On the one hand, we have hidden the part of the non-linear system involving the $\boldsymbol{\kappa}$
unknowns by solving for them symbolically and using those symbolic placeholders later when solving numerically for
$\boldsymbol{\omega}$ and $\boldsymbol{\beta}$.  On the other hand, and after the previous manipulation, we have seen in
Sect.~\ref{subsec:NonLinSys} that we need to solve numerically a non-linear system of $P^2+P-1$ equations with the same
number of unknowns. We aim to rewrite this system $S(\boldsymbol{\omega}, \boldsymbol{\beta}, \frac{\partial
  \boldsymbol{\omega}}{\partial \boldsymbol{\beta}})$ as $S(\boldsymbol{\omega}, \boldsymbol{\beta})$, which requires
obtaining $\frac{\partial \boldsymbol{\omega}}{\partial \boldsymbol{\beta}}$ as a function of $\boldsymbol{\omega}$ and
$\boldsymbol{\beta}$. We also compute the solutions of this linear subsystem symbolically. These manipulations of the
original system of equations permitted ACCA15 to compute the optimal SRJ parameters for a larger number of levels (up to
$P=10$) than in YM14.
  
Secondly, in order to increase the number of points, for the numerical solution of the system, since the accuracy of the
results for large values of $P$ critically depends on having sufficiently large precision, we needed Mathematica's
ability to perform arbitrary precision arithmetic. We ended up using up to twenty four digits in the representation of
the numbers in some cases.
\begin{figure*}[h]
\begin{tabular}{ll}
\hspace{-0.5cm}  \includegraphics[width=0.55\linewidth]{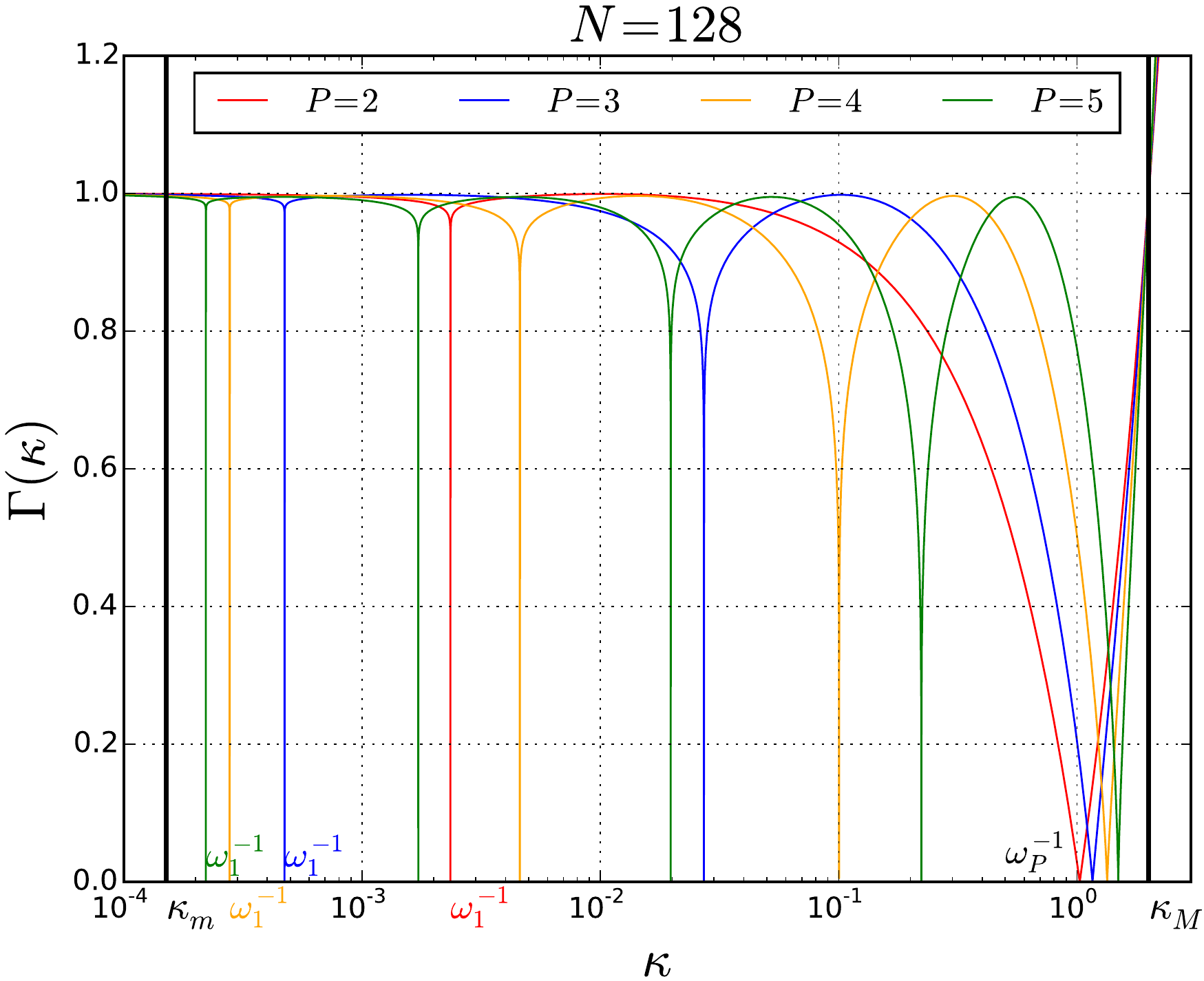} &
\hspace{-0.4cm} \includegraphics[width=0.45\linewidth]{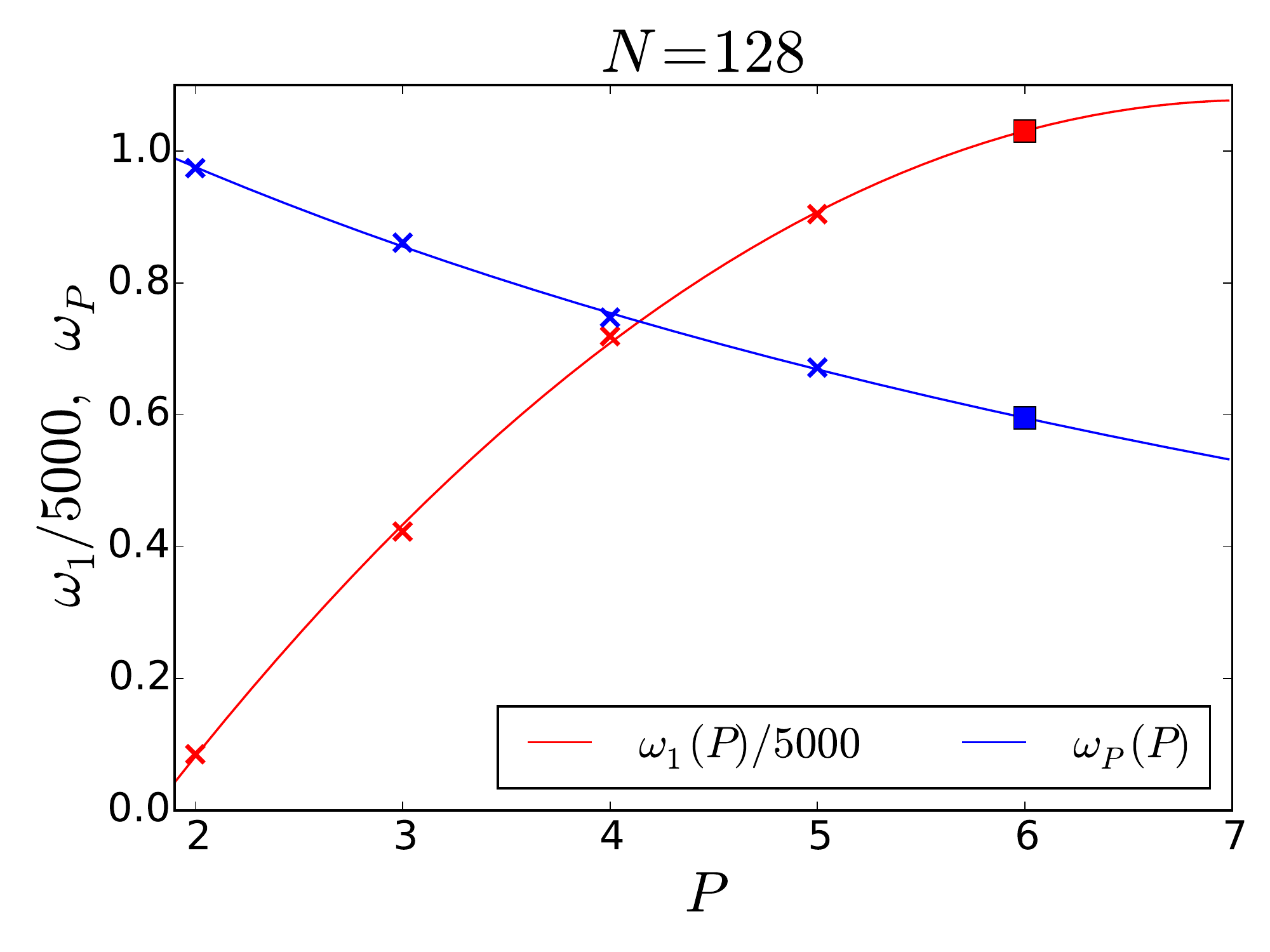}
\end{tabular}
    \caption{{\em Left}: $\Gamma(\kappa)$ functions corresponding to different values of the numbers of levels $P$ for
      $N=128$. The inverse of the minimal weight, $\omega_1^{-1}$, and the inverse of the maximal one, 
			$\omega_P^{-1}$ moves towards $\kappa_m$ and $\kappa_M$, respectively, when $P$ increases. The rest of the weights are
			distributed roughly logarithmically equally spaced inside $(\omega_1, \omega_P)$. In the right panel, we show the 
			discrete values of $\omega_1$ and of $\omega_P$ for the $P=6$, $N=128$ scheme, together with both fits of their 
			respective values as a function of $P$ to conics as described in Sect.~\ref{subsec:preImpVar}.}
    \label{fig:GammaOmegas}
\end{figure*}

Finally, the non-linear system we have to solve is very sensitive to the initial values that we guess for the
unknowns. In ACCA15, we developed a systematic way of setting the initial guesses from the values obtained from lower
levels and now we have improved it. As we can see in Fig.\,\ref{fig:GammaOmegas}, when we increase the number of levels
$P$, the values of $\omega_1^{-1}$ and $\omega_P^{-1}$ move towards $\kappa_m$ and $\kappa_M$, respectively. We can
also observe that the inverse of the rest of the weights of a scheme are roughly logarithmically equally spaced between
the values $\omega_1^{-1}$ and $\omega_P^{-1}$. Hence, we use as approximate location of the initial guesses for the inverse
of any weight for an SRJ scheme of $P$ levels the following expressions:
\begin{gather}
\{\omega_1^{-1}, \omega_1^{-1} \left(\frac{\omega_1}{\omega_P}\right)^{\frac{1}{2(P-1)}}, 
\omega_1^{-1} \left(\frac{\omega_1}{\omega_P}\right)^{\frac{3}{2(P-1)}}, \ldots,
\omega_1^{-1} \left(\frac{\omega_1}{\omega_P}\right)^{\frac{P-4}{2(P-1)}}, 
\omega_1^{-1} \left(\frac{\omega_1}{\omega_P}\right)^{\frac{P-1}{2(P-1)}},  \nonumber \\
 \omega_1^{-1} \left(\frac{\omega_1}{\omega_P}\right)^{\frac{P+2}{2(P-1)}},
\omega_1^{-1} \left(\frac{\omega_1}{\omega_P}\right)^{\frac{P+4}{2(P-1)}}, \ldots,
\omega_1^{-1} \left(\frac{\omega_1}{\omega_P}\right)^{\frac{2P-3}{2(P-1)}},
\omega_1^{-1} \left(\frac{\omega_1}{\omega_P}\right)=\omega_P^{-1}\}
\label{eq:omegaaprox1}
\end{gather}
when $P$ is odd, and 
\begin{gather}
\{\omega_1^{-1}, \omega_1^{-1} \left(\frac{\omega_1}{\omega_P}\right)^{\frac{1}{2(P-1)}}, 
\omega_1^{-1} \left(\frac{\omega_1}{\omega_P}\right)^{\frac{3}{2(P-1)}}, \ldots,
\omega_1^{-1} \left(\frac{\omega_1}{\omega_P}\right)^{\frac{P-5}{2(P-1)}},
\omega_1^{-1} \left(\frac{\omega_1}{\omega_P}\right)^{\frac{P-2}{2(P-1)}},  \nonumber \\
\omega_1^{-1} \left(\frac{\omega_1}{\omega_P}\right)^{\frac{P}{2(P-1)}},
\omega_1^{-1} \left(\frac{\omega_1}{\omega_P}\right)^{\frac{P+3}{2(P-1)}}, 
\omega_1^{-1} \left(\frac{\omega_1}{\omega_P}\right)^{\frac{P+5}{2(P-1)}}, \ldots,
\omega_1^{-1} \left(\frac{\omega_1}{\omega_P}\right)^{\frac{2P-3}{2(P-1)}},
\omega_P^{-1}\}
\label{eq:omegaaprox2}
\end{gather}
when $P$ is even.

Looking at Eqs.\,(\ref{eq:omegaaprox1}) and (\ref{eq:omegaaprox2}), the initial (guess) values of the weights for a new
SRJ scheme with an additional level ($P'=P+1$) can be built providing suitable estimates of the smallest and largest
weights, which are obtained by fitting to two conics the values of the smallest and largest weights computed for SRJ
schemes with $P-3$ to $P$ levels, and then extrapolating the result. For instance, in Fig.\,\ref{fig:GammaOmegas} we
show on the right panel the values of $\omega_1$ and $\omega_P$ as a function of $P$ with red and blue symbols,
respectively. If we want to obtain the initial values of $\omega_1$ and $\omega_P$ for $P=6$, we fit the values of
$\omega_1$ and, separately, those of $\omega_P$ for $P=2,\ldots,5$ to a hyperbola $\omega = \frac{a}{b P -c} + d$ or a
parabola $\omega=a P^2 + b P + c$ depending on the flatness of the points, an infer the value for our $P$ (in
Fig.\,\ref{fig:GammaOmegas}, the fit functions are plot with continuous lines, and the extrapolated values of
$\omega_1$ and $\omega_P$ for $P=6$ with squares).

As we shall see in the next section, and improving on the procedure outlined in ACCA15, we do not need to provide
initial values of $\boldsymbol{\beta}$, since they can be obtained analytically from the values of $\boldsymbol{\omega}$.

\subsection{Advanced analytical improvements}
\label{subsec:analyticalManipulations}

In this section, we prove two important theorems, which tell us how to calculate analytically the ancillary variables
$\frac{\partial \boldsymbol{\omega}}{\partial \boldsymbol{\beta}}$ and the parameters $\boldsymbol{\beta}$ of the SRJ
schemes in terms of the $\boldsymbol{\omega}$ and $\boldsymbol{\kappa}$ variables. Let us start with some technical
results we need for the proof of these theorems. Notice that in all products appearing from now on, each index of the product refers only to expressions containing that particular index.
\begin{lemma}
\label{parFraDec}
Let $A$ and $B$ be two matrices defined as $A:=(a_{ij})=\left(\frac{\kappa_i \, \beta_j}{1-\kappa_i \, \omega_j}\right)$ and 
$B:=(b_{ij})=\left(\frac{1-\omega_j/\omega_P}{1-\kappa_i\,\omega_j}\right)$, $i,j=1,\ldots,P-1$, respectively. The inverse matrices, $A^{-1}=\tilde{A}=(\tilde{a}_{ij})$ and $B^{-1}=\tilde{B}=(\tilde{b}_{ij})$, are given by:
\begin{equation}
\tilde{a}_{ij} = \prod_{\substack{k=1\\ k \neq j}}^{P} \prod_{\substack{l=1\\ l \neq i}}^{P}
\frac{(1-\kappa_j \,\omega_i)(1-\kappa_k \, \omega_i)(1-\kappa_j \, \omega_l)}
{\beta_i \, \kappa_j (\kappa_k-\kappa_j)(\omega_l-\omega_i)},
\label{eq:lemmaM}
\end{equation}
\begin{equation}
\tilde{b}_{ij} = \frac{\omega_P \, (1-\kappa_j \, \omega_i)}{(1-\kappa_j\,\omega_P)}
\prod_{\substack{ k=1 \\ k \neq j} }^{P-1} \prod_{\substack{ l=1 \\ l \neq i} }^{P}
\frac{(1-\kappa_k \,\omega_i)(1-\kappa_j \, \omega_l)}
{(\kappa_k-\kappa_j)(\omega_l-\omega_i)}.
\label{eq:lemmaN}
\end{equation}
\end{lemma}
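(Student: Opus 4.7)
The plan is to verify the two stated inverse formulas by direct computation of $A\tilde A = I$ and $B\tilde B = I$, exploiting the fact that both $A$ and $B$ are Cauchy-type matrices up to diagonal rescaling. The principal technical tool, consistent with the label of the lemma, is partial fraction decomposition of suitable rational functions.

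First I would factor out the diagonal parts, writing $A = D_\kappa\,M\,D_\beta$ and $B = M\,D_\omega$, where $M_{ij} = 1/(1-\kappa_i\omega_j)$, $D_\kappa = \mathrm{diag}(\kappa_1,\ldots,\kappa_{P-1})$, $D_\beta = \mathrm{diag}(\beta_1,\ldots,\beta_{P-1})$, and $D_\omega = \mathrm{diag}(1-\omega_1/\omega_P,\ldots,1-\omega_{P-1}/\omega_P)$. Inversion of the diagonal factors is immediate and produces exactly the scalar prefactors $1/(\beta_i\kappa_j)$ in $\tilde a_{ij}$ and $\omega_P/(1-\kappa_j\omega_P)$ in $\tilde b_{ij}$. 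The substantive task is therefore to invert the common kernel $M$, after which $A^{-1} = D_\beta^{-1}\,M^{-1}\,D_\kappa^{-1}$ and $B^{-1} = D_\omega^{-1}\,M^{-1}$ reassemble the claimed formulas.

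Second, to invert $M$ I would apply the classical Cauchy-matrix inversion argument. Rewriting $1-\kappa_i\omega_j = -\omega_j(\kappa_i - 1/\omega_j)$ identifies $M$ as a Cauchy matrix on the nodes $x_i = \kappa_i$ and $y_j = 1/\omega_j$ up to a right-diagonal rescaling. The $j$th column of $M^{-1}$ is then pinned down by the requirement that the rational function
\begin{equation*}
R_j(x) \;=\; \sum_{k=1}^{P-1}\frac{(M^{-1})_{kj}}{1-x\omega_k}
\end{equation*}
interpolate $\delta_{ij}$ at $x=\kappa_i$, which uniquely fixes its numerator as a Lagrange polynomial of degree $\le P-2$. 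Reading off the residues of $R_j$ at the simple poles $x=1/\omega_k$ then yields closed-form products which, after modest algebraic rearrangement, deliver exactly the double products over $k$ and $l$ appearing in Eqs.~(\ref{eq:lemmaM}) and~(\ref{eq:lemmaN}).

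The main obstacle I expect is bookkeeping rather than genuine mathematical difficulty: tracking signs that each Cauchy factor contributes, correctly pairing residue denominators with Lagrange numerators, and reconciling the ranges of the products as written. In particular, the statement extends some products up to $P$ instead of $P-1$, invoking the boundary parameters $\omega_P$ and $\kappa_P$ that are not entries of the $(P-1)\times(P-1)$ matrices themselves; absorbing these nominally "extra" $k=P$ and $l=P$ factors into the Cauchy-inverse output will require either the prefactor from the diagonal $D_\omega$ (for the $B$ case) or the structural relations of the optimization problem, namely Eqs.~(\ref{eq:partialG}) and~(\ref{eq:equMax}), which tie the boundary values of $\omega$ and $\kappa$ to the interior unknowns.
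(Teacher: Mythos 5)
Your overall strategy---peeling off diagonal factors and inverting the residual Cauchy kernel $M_{ij}=1/(1-\kappa_i\,\omega_j)$ via partial fractions and Lagrange interpolation---is essentially the paper's own proof in different clothing: the paper verifies $\sum_j\tilde a_{ij}a_{jm}=\delta_{im}$ directly by establishing exactly the partial-fraction identity that your interpolation condition $R_j(\kappa_i)=\delta_{ij}$ encodes, so the substance of the two arguments coincides. The one place your plan would go astray is the final paragraph. The index-$P$ factors are purely algebraic and need no input from the optimization problem; invoking Eqs.~(\ref{eq:partialG}) or (\ref{eq:equMax}) would be a wrong turn, because the lemma must hold as an unconditional matrix identity for the subsequent theorems to be allowed to invert the linear systems they set up. Concretely: for $A$, the matrix is in effect $P\times P$ (the paper adjoins $\kappa_P:=\kappa_M$, and the system for the ancillary variables has $P$ unknowns $\partial\omega_1/\partial\beta_l,\ldots,\partial\omega_P/\partial\beta_l$), so the products running to $P$ are the full Cauchy inverse rather than extras; for $B$, the diagonal factor is $D_\omega=\mathrm{diag}\bigl((\omega_P-\omega_j)/\omega_P\bigr)$, whose inverse contributes the $i$-dependent factor $\omega_P/(\omega_P-\omega_i)$, not the $j$-dependent prefactor $\omega_P/(1-\kappa_j\,\omega_P)$ that you attribute to it. That prefactor instead cancels against the $l=P$ term $(1-\kappa_j\,\omega_P)/(\omega_P-\omega_i)$ of the product in Eq.~(\ref{eq:lemmaN}), and the combination reproduces precisely $\omega_P/(\omega_P-\omega_i)$. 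Once these two pieces of bookkeeping are corrected, your Cauchy-inversion computation closes with no appeal to the structure of the optimization problem.
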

\begin{proof}
We just need to check that $\sum_{j=1}^P \tilde{a}_{ij} a_{jm} = \sum_{j=1}^P \tilde{b}_{ij} b_{jm} = \delta_{im}$. For convenience, we define $\kappa_P:=\kappa_M$.

We will start checking that
\begin{equation}
\sum_{j=1}^P \frac{1}{(-1+\kappa_j \, \omega_m)} 
\left(\prod_{\substack{k=1\\ k \neq j}}^{P} \prod_{\substack{l=1\\ l \neq i}}^{P}
\frac{(-1+\kappa_j \, \omega_l)}{(\kappa_j-\kappa_k)}\right) = \delta_{im} 
\left(\prod_{\substack{k=1}}^{P} \prod_{\substack{l=1\\ l \neq i}}^{P}
\frac{(\omega_i-\omega_l)}{(-1+\kappa_k \, \omega_i)}\right).
\end{equation}
We consider first the case where $i \neq m$. In general, taking into account that all the $\kappa_i$ are strictly different, for a polynomial $F(x)$, with $\deg F(x)<P-1$, we can do a partial fraction decomposition of the following form:
\begin{equation}
\frac{F(x)}{\prod_{j=1}^{P-1} (x-\kappa_j)} = \sum_{j=1}^{P-1} \frac{F(\kappa_j)}{(x-\kappa_j)} 
\left(\prod_{\substack{k=1\\ k \neq j}}^{P-1} \frac{1}{(\kappa_j-\kappa_k)}\right).
\end{equation}
Considering $F(x)=\prod_{\substack{l=1\\ l \neq i,m}}^{P} (-1+x \, \omega_l)$
in the above expression, and evaluating at $x=\kappa_P$, we get the desired expression.

We consider now the remaining case $i=m$. For convenience, we define $\kappa_0=1/\omega_i$, that satisfies $\kappa_0 \neq \kappa_j, j=1,\ldots,P-1$. For a polynomial $F(x)$, with $\deg F(x)<P$,
we can do a partial fraction decomposition of the following form:
\begin{equation}
\frac{F(x)}{\prod_{j=0}^{P-1} (x-\kappa_j)} = \sum_{j=0}^{P-1} \frac{F(\kappa_j)}{(x-\kappa_j)} 
\left(\prod_{\substack{k=0\\ k \neq j}}^{P-1} \frac{1}{(\kappa_j-\kappa_k)}\right).
\end{equation}
Considering $F(x)=\prod_{\substack{l=1\\ l \neq i}}^{P} (-1+x \, \omega_l)$ in the above expression,
and evaluating at $x=\kappa_P$, we get the desired expression.

We use this equality to check the expression for the inverse matrix $A^{-1}$ (as well as $B^{-1}$ below):
\begin{gather}
\sum_{j=1}^P \tilde{a}_{ij} a_{jm} = \frac{\beta_m}{\beta_i}
\sum_{j=1}^P \frac{(1-\kappa_j \,\omega_i)}{(1-\kappa_j \, \omega_m)}
\prod_{\substack{k=1\\ k \neq j}}^{P} \prod_{\substack{l=1\\ l \neq i}}^{P}
\frac{(-1+\kappa_k \, \omega_i)(-1+\kappa_j \, \omega_l)}{(\kappa_j-\kappa_k)(\omega_i-\omega_l)}
\nonumber \\ 
= \frac{\beta_m}{\beta_i}
\left(\prod_{\substack{k=1}}^{P} \prod_{\substack{l=1\\ l \neq i}}^{P}
\frac{(-1+\kappa_k \, \omega_i)}{(\omega_i-\omega_l)}\right)
\sum_{j=1}^P \frac{1}{(-1+\kappa_j \, \omega_m)}
\prod_{\substack{k=1\\ k \neq j}}^{P} \prod_{\substack{l=1\\ l \neq i}}^{P}
\frac{(-1+\kappa_j \, \omega_l)}{(\kappa_j-\kappa_k)}
\nonumber \\
= \frac{\beta_m}{\beta_i}
\left(\prod_{\substack{k=1}}^{P} \prod_{\substack{l=1\\ l \neq i}}^{P}
\frac{(-1+\kappa_k \, \omega_i)}{(\omega_i-\omega_l)}\right)
\left(\prod_{\substack{k=1}}^{P} \prod_{\substack{l=1\\ l \neq i}}^{P}
\frac{(\omega_i-\omega_l)}{(-1+\kappa_k\omega_i)}\right) \delta_{im} = \delta_{im}.
\end{gather}

Finally, we check the expression for the inverse matrix $B^{-1}$:
\begin{gather}
\sum_{j=1}^P \tilde{b}_{ij} b_{jm} \nonumber \\
= \frac{(\omega_m-\omega_P)}{(\omega_i-\omega_P)}
\left(\prod_{\substack{k=1}}^{P-1} \prod_{\substack{ l=1 \\ l \neq i} }^{P-1}
\frac{(-1+\kappa_k \, \omega_i)}{(\omega_i-\omega_l)}\right)
\sum_{j=1}^P\frac{-1}{(-1+\kappa_j \, \omega_m)}
\prod_{\substack{ k=1 \\ k \neq j} }^{P-1} \prod_{\substack{ l=1 \\ l \neq i} }^{P-1}
\frac{(-1+\kappa_j \, \omega_l)}{(\kappa_j-\kappa_k)}
\nonumber \\
= \frac{\omega_m-\omega_P}{\omega_i-\omega_P} \delta_{im} = \delta_{im}.
\end{gather}
\end{proof}

\begin{theorem}
\begin{equation}
\frac{\partial}{\partial \beta_q} \omega_i = \sum_{j=1}^{P} \prod_{\substack{ k=1 \\ k \neq j} }^{P} \prod_{\substack{ l=1 \\ l \neq i}}^{P} \frac{(1 - \kappa_j \omega_i)}{\beta_i \kappa_j}
\frac{(1 - \kappa_k \omega_i)}{(\kappa_k - \kappa_j)}
\frac{(1 - \kappa_j \omega_l)}{(\omega_l - \omega_i)}
\log \bigg | \frac{1 - \kappa_j \omega_q}{1 - \kappa_j \omega_P} \bigg |. 
\label{eq:ancVar}
\end{equation}
\end{theorem}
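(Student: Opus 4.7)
The plan is to differentiate the optimization conditions (\ref{eq:partialbeta}) and (\ref{eq:equDifBeta}) with respect to $\beta_q$ to produce a linear system for the unknowns $\partial\omega_j/\partial\beta_q$, and then to invert that system using Lemma~\ref{parFraDec}, which immediately produces Eq.~(\ref{eq:ancVar}).

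Concretely, starting from $\log\Gamma(\kappa_i)=\sum_{j=1}^{P}\beta_j\log|1-\omega_j\kappa_i|$, and remembering the constraint $\beta_P=1-\sum_{j<P}\beta_j$ together with the dependencies $\boldsymbol{\omega}=\boldsymbol{\omega}(\boldsymbol{\beta})$ and $\boldsymbol{\kappa}=\boldsymbol{\kappa}(\boldsymbol{\beta})$, I would take the total $\beta_q$-derivative for each $i\in\{0,1,\ldots,P\}$. Three contributions appear: the explicit exponents give $\log|1-\omega_q\kappa_i|-\log|1-\omega_P\kappa_i|$; the $\omega_j(\boldsymbol{\beta})$-chain gives $-\kappa_i\sum_{j=1}^{P}\frac{\beta_j}{1-\omega_j\kappa_i}\frac{\partial\omega_j}{\partial\beta_q}$; and the $\kappa_i(\boldsymbol{\beta})$-chain gives $-\frac{\partial\kappa_i}{\partial\beta_q}\sum_{j=1}^{P}\frac{\beta_j\omega_j}{1-\omega_j\kappa_i}$. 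The crucial observation is that this last piece vanishes for every $i$: for the interior maxima $i=1,\ldots,P-1$ by the extremum condition (\ref{eq:partialG}), and for the endpoints $i\in\{0,P\}$ because $\kappa_0=\kappa_m$ and $\kappa_P=\kappa_M$ are independent of $\boldsymbol{\beta}$, so $\partial\kappa_i/\partial\beta_q=0$.

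Imposing $\partial\Gamma(\kappa_i)/\partial\beta_q=0$ then yields, for each $q\in\{1,\ldots,P-1\}$ and $i\in\{1,\ldots,P\}$, the linear system
\begin{equation*}
\sum_{j=1}^{P}\frac{\kappa_i\beta_j}{1-\omega_j\kappa_i}\frac{\partial\omega_j}{\partial\beta_q}=\log\left|\frac{1-\omega_q\kappa_i}{1-\omega_P\kappa_i}\right|,
\end{equation*}
whose coefficient matrix is exactly the matrix $A$ of Lemma~\ref{parFraDec}. Multiplying by $\tilde{A}=A^{-1}$ and inserting the closed form of $\tilde{a}_{ij}$ from (\ref{eq:lemmaM}) reproduces Eq.~(\ref{eq:ancVar}) term by term, with $j$ running over the $\kappa_j$'s weighting each logarithmic right-hand side. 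The additional equation coming from $i=0$ is automatically consistent because the full optimization problem is assumed to admit a solution.

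The main obstacle I anticipate is not algebraic but conceptual bookkeeping: cleanly separating the three sources of $\boldsymbol{\beta}$-dependence in $\Gamma(\kappa_i)$ and justifying with two different arguments—interior extremum versus fixed endpoint—that the chain contribution through $\kappa_i$ drops out in every case. Once the linear system is assembled, the conclusion is a one-line application of Lemma~\ref{parFraDec}.
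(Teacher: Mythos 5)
Your proposal is correct and follows essentially the same route as the paper: assemble the linear system $A\,\partial\boldsymbol{\omega}/\partial\beta_q=\boldsymbol{f}$ with $a_{ij}=\kappa_i\beta_j/(1-\kappa_i\omega_j)$ and $f_i=\log\left|(1-\kappa_i\omega_q)/(1-\kappa_i\omega_P)\right|$ from the optimality conditions \eqref{eq:partialbeta}--\eqref{eq:equDifBeta}, then invert with Lemma~\ref{parFraDec}. In fact you make explicit the step the paper leaves implicit, namely the three-way split of the total $\beta_q$-derivative of $\log\Gamma(\kappa_i)$ and the vanishing of the $\partial\kappa_i/\partial\beta_q$ contribution (by the extremum condition \eqref{eq:partialG} at interior points and by the fixedness of $\kappa_m$, $\kappa_M$ at the endpoints), which is the correct justification for the system the paper writes down.
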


\begin{proof}
We have that $\kappa_j$ with $j=1, \ldots, P-1$ are the roots where the local extrema are located. We will also use the already defined $\kappa_P$. Taking into account the Eqs.\,\eqref{eq:partialbeta} and \eqref{eq:equDifBeta}, for a fixed value of $l, 1\leq l \leq P-1$, we construct the linear system 
$A \, \frac{\partial \boldsymbol{\omega}}{\partial \boldsymbol{\beta}} = \boldsymbol{f}$, which in components reads:
\begin{equation}
[a_{ij}] \cdot
\left [ 
   \begin{array}{c}
      \frac{\partial}{\partial \beta_l} \omega_1 \\
      \frac{\partial}{\partial \beta_l} \omega_2 \\
      \vdots \\
      \frac{\partial}{\partial \beta_l} \omega_P \\
   \end{array} 
\right ]
= 
\left [ 
   \begin{array}{c}
      \log | 1 - \kappa_1 \omega_l / 1 - \kappa_1 \omega_P |  \\
      \log | 1 - \kappa_2 \omega_l / 1 - \kappa_2 \omega_P |  \\
      \vdots \\
      \log | 1 - \kappa_P \omega_l / 1 - \kappa_P \omega_P |  \\
   \end{array} 
\right ],
\label{eq:LinSys}
\end{equation}
where $a_{ij}$ is defined in Lemma~\ref{parFraDec}. We can solve for the ancillary variables 
$\frac{\partial \omega_i}{\partial \beta_j}, \, i=1,\ldots,P,\, j=1,\ldots, P-1$, analytically, just inverting the matrix $A$:
\begin{equation}
\left [
   \begin{array}{c}
      \frac{\partial}{\partial \beta_l} \omega_1 \\
      \frac{\partial}{\partial \beta_l} \omega_2 \\
      \vdots \\
      \frac{\partial}{\partial \beta_l} \omega_P \\
   \end{array} 
\right ]
= 
[a_{ij}]^{-1} \cdot
\left [ 
   \begin{array}{c}
      \log | 1 - \kappa_1 \omega_l / 1 - \kappa_1 \omega_P |  \\
      \log | 1 - \kappa_2 \omega_l / 1 - \kappa_2 \omega_P |  \\
      \vdots \\
      \log | 1 - \kappa_P \omega_l / 1 - \kappa_P \omega_P |  \\
   \end{array} 
\right ]
\end{equation}

Using Lemma~\ref{parFraDec} to get the expression of the inverse matrix and doing the corresponding matrix product, we obtain Eq.\,\eqref{eq:ancVar}. Notice that we obtain the same results when we consider $\kappa_m$ instead of $\kappa_P$.
\end{proof}

\begin{theorem}
\begin{equation}
\beta_i = \prod_{k=1}^{P-1} \prod_{\substack{ l=1 \\ l \neq i}}^{P} 
(1 - \kappa_k \omega_i) \frac{\omega_l}{(\omega_l - \omega_i)}.
\label{eq:betai}
\end{equation}
\end{theorem}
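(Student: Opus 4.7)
The plan is to exploit the fact that the $\beta_i$ are determined by a square linear system whose solution can be read off from the residues of a single rational function. The extremum condition \eqref{eq:partialG} evaluated at each of the $P-1$ interior extrema $\kappa_k$ reads $\sum_{i=1}^{P}\beta_i\omega_i/(1-\kappa_k\omega_i)=0$, and together with the normalization $\sum_{i=1}^P \beta_i = 1$ this gives $P$ equations in the $P$ unknowns $\beta_1,\ldots,\beta_P$.

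Rather than invert the matrix directly via Lemma~\ref{parFraDec}, I would package these equations into the rational function
\begin{equation*}
R(\kappa) := \sum_{i=1}^{P} \frac{\beta_i \omega_i}{1-\kappa\omega_i} = \frac{N(\kappa)}{D(\kappa)}, \qquad D(\kappa) := \prod_{i=1}^{P}(1-\kappa\omega_i),
\end{equation*}
whose numerator $N$ has degree at most $P-1$ in $\kappa$. The extremum conditions force $N$ to vanish at the $P-1$ distinct points $\kappa_1,\ldots,\kappa_{P-1}$, so $N(\kappa)=C\prod_{k=1}^{P-1}(\kappa-\kappa_k)$ for some constant $C$.

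The constant $C$ is pinned down by the normalization: matching the asymptotic behaviour $R(\kappa) = -(\sum_i\beta_i)/\kappa + O(\kappa^{-2}) = -1/\kappa + O(\kappa^{-2})$ as $\kappa \to \infty$ against the explicit product yields $C=(-1)^{P-1}\prod_{l=1}^{P}\omega_l$. Then $\beta_j$ is recovered as $\beta_j = -\mathop{\mathrm{Res}}_{\kappa=1/\omega_j} R = -N(1/\omega_j)/D'(1/\omega_j)$. After the common factor $\omega_j^{P-1}$ cancels between numerator and denominator of the residue, and the sign $(-1)^{P-1}$ is absorbed by rewriting $\prod_{l\neq j}(\omega_j-\omega_l)$ as $(-1)^{P-1}\prod_{l\neq j}(\omega_l-\omega_j)$, the expression collapses to Eq.~\eqref{eq:betai}.

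The only delicate point is the sign and power-of-$\omega_j$ bookkeeping at the residue step; that is routine once the rational-function structure has been exposed. One could alternatively invert the matrix $B$ of Lemma~\ref{parFraDec} to obtain $\beta_i = \sum_{j=1}^{P-1}\tilde{b}_{ij}$, but the residue viewpoint yields the compact product form of Eq.~\eqref{eq:betai} most directly.
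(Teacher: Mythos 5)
Your argument is correct, and it takes a genuinely different route from the paper's. The paper eliminates $\beta_P$ via the normalization, assembles the $P-1$ extremum conditions into the linear system $B\boldsymbol{\beta}=(1,\ldots,1)^T$ with $B$ as in Lemma~\ref{parFraDec}, inverts $B$ using the explicit formula proved there, and then needs one further partial-fraction identity to collapse the resulting sum $\beta_i=\sum_{j}\tilde b_{ij}$ into the product form. You instead encode all $P$ conditions at once in the rational function $R(\kappa)=\sum_i\beta_i\omega_i/(1-\kappa\omega_i)$: the $P-1$ vanishing conditions fix the roots of the degree-$(P-1)$ numerator $N$, the normalization fixes its leading coefficient (equivalently the $\kappa\to\infty$ asymptotics), and each $\beta_j$ drops out as $-\mathrm{Res}_{\kappa=1/\omega_j}R=-N(1/\omega_j)/D'(1/\omega_j)$. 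I checked the bookkeeping: $D'(1/\omega_j)=-\omega_j\prod_{l\neq j}(1-\omega_l/\omega_j)$, the factors $\omega_j^{-(P-1)}$ cancel between $N(1/\omega_j)$ and the denominator, and $C=(-1)^{P-1}\prod_l\omega_l$ combined with $(-1)^{P-1}\prod_{l\neq j}(\omega_j-\omega_l)=\prod_{l\neq j}(\omega_l-\omega_j)$ reproduces Eq.~\eqref{eq:betai} exactly. Your approach buys directness — the product form appears immediately, with no matrix inversion and no auxiliary summation identity — while the paper's approach has the advantage that the inverse of $B$ (and of $A$) is needed anyway for the companion theorem on $\partial\omega_i/\partial\beta_q$, so Lemma~\ref{parFraDec} does double duty. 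The one point worth making explicit in your write-up is that $D(\kappa_k)\neq 0$, i.e.\ $\kappa_k\neq 1/\omega_i$ for all $i,k$ (guaranteed since each $\kappa_k$ lies strictly inside $(1/\omega_k,1/\omega_{k+1})$), so that the vanishing of $R$ at $\kappa_k$ really does force $N(\kappa_k)=0$.
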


\begin{proof}
We consider now the $P-1$ equations resulting from Eq.\,\eqref{eq:partialG} when $\kappa$ is replaced
by $\kappa_i, i=1,\ldots, P-1$. We write $\beta_P = \beta_P(\beta_j)$, and obtain:
\begin{equation}
\sum_{j=1}^{P-1} \frac{(1-\omega_j/\omega_P)}{(1-\kappa_i \, \omega_j)} \beta_j = 1, \, i=1,\ldots,P-1,
\end{equation}
These equations can be rewritten in matrix form, $B \boldsymbol{\beta} = \boldsymbol{g}$, which in components reads: 
\begin{equation}
[b_{ij}] \cdot
\left [ 
   \begin{array}{c}
      \beta_1 \\
      \beta_2 \\
      \vdots \\
      \beta_{P-1} \\
   \end{array} 
\right ] 
= 
\left [ 
   \begin{array}{c}
      1  \\
      1  \\
      \vdots \\
      1  \\
   \end{array} 
\right ],
\label{eq:LinSys}
\end{equation}
where $b_{ij}$ is defined in Lemma~\ref{parFraDec}. Using Lemma~\ref{parFraDec} to get the expression of the inverse matrix and doing the corresponding matrix product, we obtain:
\begin{gather}
\beta_i = \omega_P \sum_{j=1}^{P-1} \frac{(1-\kappa_j \, \omega_i)}{(1-\kappa_j\,\omega_P)}
\prod_{\substack{ k=1 \\ k \neq j} }^{P-1} \prod_{\substack{ l=1 \\ l \neq i} }^{P}
\frac{(-1+\kappa_k\,\omega_i)(-1+\kappa_j\,\omega_l)}{(\kappa_j-\kappa_k)(\omega_i-\omega_l)}
\nonumber \\
= \omega_P \left(\prod_{\substack{k=1}}^{P-1} \prod_{\substack{ l=1 \\ l \neq i} }^{P}
\frac{(-1+\kappa_k \, \omega_i)}{(\omega_i-\omega_l)} \right)
\sum_{j=1}^{P-1} \prod_{\substack{ k=1 \\ k \neq j} }^{P-1} \prod_{\substack{ l=1 \\ l \neq i} }^{P-1}
\frac{(-1+\kappa_j\,\omega_l)}{(\kappa_j-\kappa_k)}.
\end{gather}
In particular, using the simplification proven at the beginning of Lemma~\ref{parFraDec} for $i=m$, changing $P$ by $P-1$, and setting $\omega_i=0$ just formally, we also get that:
\begin{equation}
\sum_{j=1}^{P-1} \left( \prod_{\substack{k=1\\ k \neq j}}^{P-1} \prod_{\substack{l=1\\ l \neq i}}^{P-1}
\frac{(-1+\kappa_j \, \omega_l)}{(\kappa_j-\kappa_k)} \right) =  
\prod_{\substack{l=1\\ l \neq i}}^{P-1} \omega_l.
\end{equation}
Using this expression,
\begin{equation}
\beta_i = \prod_{\substack{k=1}}^{P-1} \prod_{\substack{ l=1 \\ l \neq i} }^{P}
\frac{(-1+\kappa_k \, \omega_i) \, \omega_l}{(\omega_i-\omega_l)}.
\end{equation}
\end{proof}

\subsection{Advantages of the rewritten system}
\label{subsec:rewSys}

In the most general case, as we have commented above, we need to solve a system
$S(\boldsymbol{\omega},\boldsymbol{\beta},\boldsymbol{\kappa},\frac{\partial \boldsymbol{\omega}}{\partial
  \boldsymbol{\beta}})$. With the theorems recently introduced in Sect.~\ref{subsec:analyticalManipulations}, which
basically express $\frac{ \partial \boldsymbol{\omega}}{\partial \boldsymbol{\beta} }$ as $\frac{ \partial
  \boldsymbol{\omega}}{\partial \boldsymbol{\beta} } (\boldsymbol{\omega}, \boldsymbol{\kappa})$ and
$\boldsymbol{\beta}$ also as $\boldsymbol{\beta}(\boldsymbol{\omega}, \boldsymbol{\kappa})$, and substituting them into
Eq.\,\eqref{eq:equMax} and Eq.\,\eqref{eq:partialbeta}, the non-linear, algebraic-differential system reduces to a
purely algebraic system of the form $S(\boldsymbol{\omega},\boldsymbol{\kappa})$.

We have developed a code that implements everything commented in Sect.~\ref{subsec:preImpVar} and that automatically constructs the system
and solves it for any value of $P$. This program is written in Mathematica and combines both symbolic with numerical calculations. We obtain two major benefits from the new system of equations to be solved:

\begin{enumerate}

\item For any given number of levels $P$, we reduce by orders of magnitude the computing time in the generation of the
  symbolic system that we need to solve. In ACCA15, we built $S(\boldsymbol{\omega}, \boldsymbol{\beta})$ and needed a
  lot of time for the symbolic calculations involved in obtaining, on the one hand, $\frac{ \partial
    \boldsymbol{\omega}}{\partial \boldsymbol{\beta} } (\boldsymbol{\omega}, \boldsymbol{\beta})$, solving the
  corresponding linear subsystem (Eq.\,\ref{eq:LinSys}) and, on the other hand,
  $\boldsymbol{\kappa}(\boldsymbol{\omega}, \boldsymbol{\beta})$. With the new methodology, employing
  Eqs.\,\eqref{eq:ancVar} and \eqref{eq:betai}, we exchange the role of $\boldsymbol{\beta}$ and $\boldsymbol{\kappa}$,
  since we have analytic formulae to express the ancillary variables and $\boldsymbol{\beta}$ as functions of
  $\boldsymbol{\omega}$ and $\boldsymbol{\kappa}$, respectively. With the consequent reduction in the calculation time,
  the computational time to solve the remaining equations becomes negligible.

\item The solution of the non-linear system to obtain the optimal SRJ parameters needs a suitable methodology to compute
  their initial values (Sect.~\ref{subsec:preImpVar}). In ACCA15, we had to provide initial values for
  $\boldsymbol{\omega}$ and $\boldsymbol{\beta}$. With the newly derived theorems of the previous
  section, we only need to provide initial guesses for $\boldsymbol{\omega}$, since employing Eq.\,\eqref{eq:betai}
  $\boldsymbol{\beta} = \boldsymbol{\beta}(\boldsymbol{\omega},\boldsymbol{\kappa})$, and $\boldsymbol{\kappa}$
  satisfies
\begin{equation}
\kappa_i \in \bigg ( \frac{1}{\omega_i}, \frac{1}{\omega_{i+1}} \bigg ).
\end{equation}
From the plots of $\Gamma$ (see, e.g., Fig.\,\ref{fig:GammaOmegas}), we can see that each maximum $\kappa_i$ is roughly placed at:
\begin{equation}
\kappa_i \approx \frac{1}{\omega_i} + \frac{\frac{1}{\omega_{i+1}} - \frac{1}{\omega_i} }{3}, 
\end{equation}
which are the values that we will use as initial guesses. 
\end{enumerate}

With these two improvements, we have reduced by four orders of magnitude the total time for finding the parameters of an
optimal scheme. For example, for $P=10$, it was necessary to spend a calculation time in Mathematica of the order of one
week with the methodology employed by ACCA15. In contrast, with the improvements reported here, we can accomplish the
same task in tens of seconds. While, in practice, in ACCA15 we were limited (due to the computing time) to SRJ schemes
with $P \le10$, now we can tackle larger number of levels.

%%%%%%%%%%%%%%%%%%%%%%%%%%%%%%%%%%%%%%%%%%%%%%%%%%%%%%%%%%%%%%%%%%%%%
\section{Results}
\label{sec:results}
%%%%%%%%%%%%%%%%%%%%%%%%%%%%%%%%%%%%%%%%%%%%%%%%%%%%%%%%%%%%%%%%%%%%%
\begin{table*}
\caption{Parameters $\boldsymbol{w}$, $\boldsymbol{\beta}$ and the
  estimation of the convergence performance index $\rho =
  \sum_{i=1}^{P} \omega_i \beta_i$ of the $P=2$, $P=3$, $P=4$ and
  $P=5$ schemes for a number of values of $N$ and the model problem
  specified in Eq.\,(\ref{eq:Laplace}).}
\scriptsize
\begin{centering}
\bigskip
\begin{tabular}{c c | c | c | c }
 N & & $P=2$ & $P=3$ & $P=4$  \\
\hline
$\multirow{3}{*}{100}$ & $\boldsymbol{\omega}$ & $\{321.074,0.968096\}$ & $\{1420.73,30.0648,0.845599\}$ & $\{2308.12,162.259,8.50839,0.732499\}$  \\
 & $\boldsymbol{\beta} $ & $ \{0.00993673,0.990063\}$ & $\{0.00502828,0.0729552,0.922017\}$ & $\{0.00430412,0.0245487,0.158309,0.812838\}$  \\
      & $\rho$ & 4.15 & 10.12 & 15.86 \\
\hline
$\multirow{3}{*}{150}$ & $\boldsymbol{\omega} $ & $ \{509.976,0.977667\}$ & $\{2724.66,41.8246,0.870558\}$ & $\{4707.62,259.325,10.9382,0.756243\}$  \\
 & $\boldsymbol{\beta} $ & $ \{0.0064850,0.99352\}$ & $\{0.00304593,0.0574955,0.939459\}$ & $\{0.00268244,0.0182254,0.138417,0.840676\}$  \\
      & $\rho$ & 4.28 & 11.52 & 19.50 \\
\hline
$\multirow{3}{*}{200}$ & $\boldsymbol{\omega} $ & $ \{704.099,0.982735\}$ & $\{4295.,52.6521,0.886485\}$ & $\{7968.04,368.694,13.2257,0.774085\}$  \\
 & $\boldsymbol{\beta} $ & $ \{0.00478657,0.995213\}$ & $\{0.00211898,0.048302,0.949579\}$ & $\{0.00185082,0.0144313,0.124148,0.85957\}$  \\
      & $\rho$ & 4.35 & 12.49 & 22.38 \\
\hline
$\multirow{3}{*}{250}$ & $\boldsymbol{\omega} $ & $ \{901.84,0.985888\}$ & $\{6090.23,62.8089,0.897814\}$ & $\{11871.8,481.378,15.1867,0.786453\}$  \\
 & $\boldsymbol{\beta} $ & $ \{0.00378101,0.996219\}$ & $\{0.00159331,0.0420853,0.956321\}$ & $\{0.00139269,0.0120297,0.114584,0.871993\}$  \\
      & $\rho$ & 4.39 & 13.21 & 24.75 \\
\hline
$\multirow{3}{*}{300}$ & $\boldsymbol{\omega} $ & $ \{1102.34,0.988045\}$ & $\{8082.34,72.4478,0.906414\}$ & $\{16301.,591.753,17.0536,0.797245\}$  \\
 & $\boldsymbol{\beta} $ & $ \{0.00311809,0.996882\}$ & $\{0.00125948,0.0375472,0.961193\}$ & $\{0.00110797,0.0104108,0.106471,0.88201\}$  \\
      & $\rho$ & 4.42 & 13.77 & 26.74 \\
\hline
$\multirow{3}{*}{350}$ & $\boldsymbol{\omega} $ & $ \{1305.06,0.989617\}$ & $\{10250.9,81.6684,0.913233\}$ & $\{21362.1,707.502,18.7691,0.805757\}$  \\
 & $\boldsymbol{\beta} $ & $ \{0.00264906,0.997351\}$ & $\{0.001031,0.0340609,0.964908\}$ & $\{0.000908394,0.0091685,0.100232,0.889691\}$  \\
      & $\rho$ & 4.44 & 14.23 & 28.49 \\
\hline
$\multirow{3}{*}{400}$ & $\boldsymbol{\omega} $ & $ \{1509.63,0.990814\}$ & $\{12580.2,90.5404,0.918815\}$ & $\{27421.7,850.177,20.3972,0.812165\}$  \\
 & $\boldsymbol{\beta} $ & $ \{0.00230021,0.9977\}$ & $\{0.000866032,0.0312827,0.967851\}$ & $\{0.000751992,0.00802258,0.0955775,0.895648\}$  \\
      & $\rho$ & 4.46 & 14.62 & 30.12 \\
\hline
$\multirow{3}{*}{450}$ & $\boldsymbol{\omega} $ & $ \{1715.8,0.991758\}$ & $\{15057.7,99.1151,0.923493\}$ & $\{35453.1,1161.24,24.1452,0.825463\}$  \\
 & $\boldsymbol{\beta} $ & $ \{0.00203086,0.997969\}$ & $\{0.000742062,0.0290068,0.970251\}$ & $\{0.000612978,0.00627071,0.0853787,0.907738\}$  \\
      & $\rho$ & 4.47 & 14.94 & 31.82 \\
\hline
$\multirow{3}{*}{500}$ & $\boldsymbol{\omega} $ & $ \{1923.36,0.992522\}$ & $\{17673.1,107.432,0.927487\}$ & $\{41329.,1177.71,25.2645,0.831242\}$  \\
 & $\boldsymbol{\beta} $ & $ \{0.00181679,0.998183\}$ & $\{0.000645947,0.0271018,0.972252\}$ & $\{0.000546643,0.00623392,0.0815398,0.91168\}$ \\
      & $\rho$ & 4.49 & 15.23 & 32.75 
\end{tabular}

\bigskip
\scriptsize
\begin{tabular}{c c | c }
 N & & $P=5$  \\
\hline
$\multirow{3}{*}{100}$ & $\boldsymbol{\omega}$ & $ \{2846.74,411.781,40.0941,3.97003,0.659793\}$  \\
      & $\boldsymbol{\beta}$ & $\{0.00395334,0.0134445,0.0549429,0.22302,0.70464\}$   \\
            & $\rho$ & 20.34 \\
\hline
$\multirow{3}{*}{150}$ & $\boldsymbol{\omega}$ & $ \{6083.43,723.916,58.9841,4.88096,0.679269\}$  \\
      & $\boldsymbol{\beta}$ & $\{0.00248163,0.00967631,0.04472,0.205462,0.73766\}$   \\
            & $\rho$ & 26.24 \\
\hline
$\multirow{3}{*}{200}$ & $\boldsymbol{\omega}$ & $ \{10402.8,1077.5,77.4789,5.6526,0.693256\}$  \\
      & $\boldsymbol{\beta}$ & $\{0.00176797,0.00760734,0.038404,0.192846,0.759375\}$   \\
            & $\rho$ & 31.18 \\
\hline
$\multirow{3}{*}{250}$ & $\boldsymbol{\omega}$ & $ \{15750.6,1464.91,95.6673,6.33405,0.704153\}$  \\
      & $\boldsymbol{\beta}$ & $\{0.00135255,0.00628676,0.0340103,0.183094,0.775256\}$   \\
            & $\rho$ & 35.47 \\
\hline
$\multirow{3}{*}{300}$ & $\boldsymbol{\omega}$ & $ \{22085.5,1881.21,113.603,6.95081,0.713063\}$  \\
      & $\boldsymbol{\beta}$ & $\{0.00108339,0.00536588,0.0307308,0.175201,0.787619\}$   \\
            & $\rho$ & 39.29 \\
\hline
$\multirow{3}{*}{350}$ & $\boldsymbol{\omega}$ & $ \{29373.9,2322.91,131.323,7.51817,0.720588\}$  \\
      & $\boldsymbol{\beta}$ & $\{0.000896175,0.00468493,0.028165,0.168605,0.797649\}$   \\
            & $\rho$ & 42.75 \\
\hline
$\multirow{3}{*}{400}$ & $\boldsymbol{\omega}$ & $ \{37587.8,2787.39,148.854,8.04621,0.727091\}$  \\
      & $\boldsymbol{\beta}$ & $\{0.000759202,0.00415986,0.0260888,0.162962,0.806030\}$   \\
            & $\rho$ & 45.91 \\
\hline
$\multirow{3}{*}{450}$ & $\boldsymbol{\omega}$ & $ \{46703.1,3272.60,166.218,8.54196,0.732811\}$  \\
      & $\boldsymbol{\beta}$ & $\{0.000655107,0.00374204,0.0243656,0.158048,0.813189\}$   \\
            & $\rho$ & 48.84 \\
\hline
$\multirow{3}{*}{500}$ & $\boldsymbol{\omega}$ & $ \{56698.8,3776.87,183.430,9.01057,0.737910\}$  \\
      & $\boldsymbol{\beta}$ & $\{0.000573622,0.00340130,0.0229066,0.153708,0.819411\}$   \\
            & $\rho$ & 51.56 \\
\end{tabular}
\label{table:weiBet}

\end{centering}
\end{table*}

In this section we first calibrate our new method comparing the
parameters of our optimal SRJ schemes with those of YM14 for $P\le 5$
(Sect.\,\ref{sec:calibration}). Later (Sect.\,\ref{sec:newSRJ}), we
present new optimal schemes computed employing the new methodology
sketched in the previous section, up to $P=15$.

\subsection{Calibration of the method}
\label{sec:calibration}

To calibrate the new methodology, we have recomputed the optimal
parameters for SRJ schemes with $P\le 5$ and found that our results
are the same as those obtained by YM14, when we use the same number of points
per dimension $N$ on the same model problem (Eq.\,\ref{eq:Laplace}).  

Following the ideas of YM14, the performance of any SRJ scheme with respect to the Jacobi method can be quantified
estimating the \emph{convergence performance index}, $\rho$,
\begin{equation}
\rho := \sum_{i=1}^{P} \omega_i \beta_i
\label{eq:rho}
\end{equation} 
which we have calculated for each SRJ method we have computed, and checked that it approaches its theoretical value when
we solve numerically (Eq.\,\ref{eq:Laplace}). We point out that the value of $\rho$ depends on the dimensionality of
the problem since the value of $\kappa_m$ does (see, Eq.\,\ref{eq:boundaries1D3D}).

YM14 showed that the optimal parameters computed for coarser grids can be used for finer ones. Nevertheless, minimizing
the gaps between different values of $N$ is important because the acceleration of the convergence with respect to the
Jacobi method may not be the largest possible unless we compute the optimal SRJ parameters corresponding to a given
problem size. Thus, we have completed the tables presented by YM14 minimizing the possible gaps between
resolutions. Furthermore, we have computed the optimal SRJ parameters for a number of intermediate values of $N$ in
Tab.~\ref{table:weiBet}, where we also show the value of $\rho$.
\begin{figure*}
\centering
\includegraphics[width=0.8\textwidth]{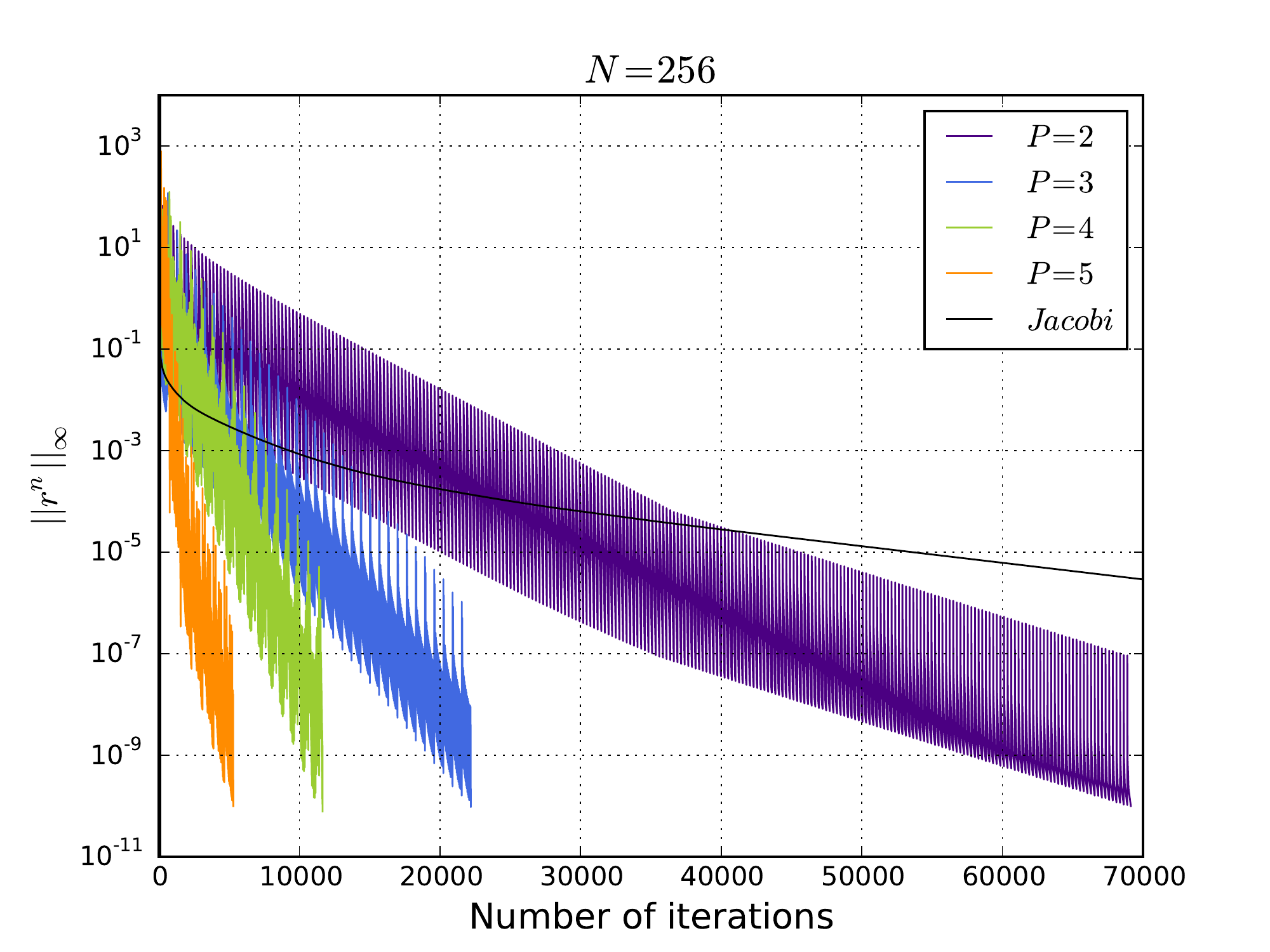} \\
\includegraphics[width=0.8\textwidth]{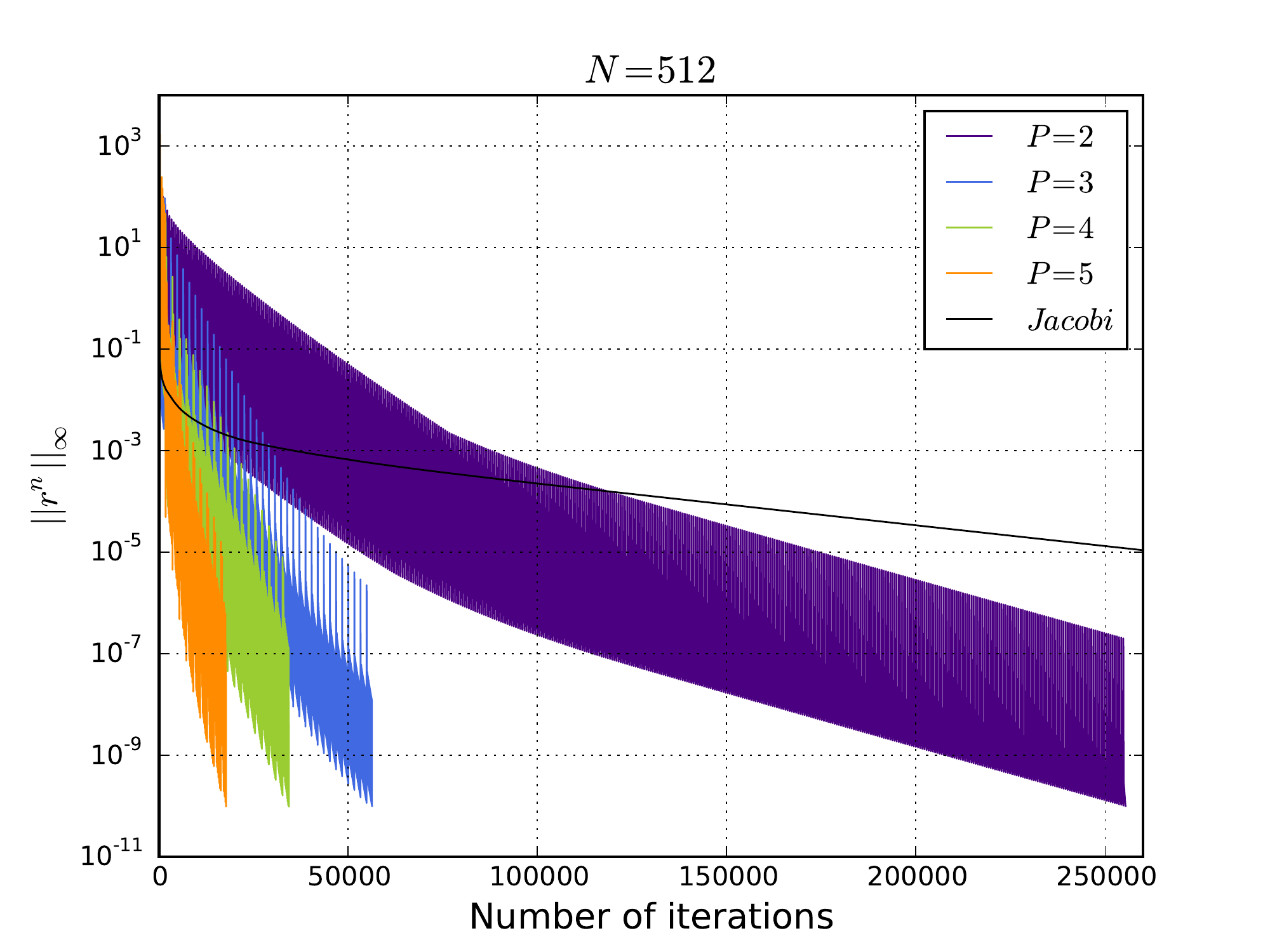} 
\caption{Comparison of the evolution of the difference between consecutive approximate solutions ($||r^n||_{\infty}$) of
  Eq.~(\ref{eq:2nd-order_CD}) of SRJ schemes from $P=2$ to $P=5$ for a
  grid with $N=256$ and with $N=512$ zones per dimension. We also include the evolution
  of the residual for the Jacobi method as a
  reference.}\label{fig:P2toP5}
\end{figure*}
In order to verify the correct behaviour of the schemes computed, we
monitor in Fig.\,\ref{fig:P2toP5} the evolution of the difference between two consecutive approximations
of the solution for the model problem specified in Eq.~(\ref{eq:Laplace}),
\begin{equation}
r^{n}_{ij} = u^{n}_{ij} - u^{n-1}_{ij},
\end{equation}
using element-wise norms and operations, that in the bidimensional case would be, for example,
\begin{equation}
||r^{n}||_{\infty} = \max_{ij} r^{n}_{ij},
\end{equation}
as a function of the number of iterations $n$ for SRJ schemes having all
the values of $P$ given by YM14. In the same figure, we also include
the residual evolution for the Jacobi method (black line). As expected,
the number of iterations to reach the prescribed tolerance decreases%
\footnote{If not explicitly mentioned, in all the cases considered in this paper the absolute tolerance 
is fixed so that $||r^n||_{\infty} < 10^{-10}$.}
as $P$ increases. For all the schemes shown in
Fig.\,\ref{fig:P2toP5}, where the number of points is set to $N=256$,
we have obtained the expected theoretical value of $\rho$.

%
%\begin{landscape}
\begin{table}
\caption{Parameters for optimized $P=14$ SRJ schemes for various values of $N$.}
\tiny
\begin{center}
%\bigskip
\begin{tabular}{c c c c}
\hline
N & Optimal Scheme Parameters & $\rho$  \\
\hline
\multirow{2}{*}{100} & $\boldsymbol{\omega}=\{3878.55,2785.67,1570.02,769.498,351.868,155.898,68.1753,29.7098,12.9941,5.75755,2.75287,1.36341,0.743467,0.523835\}$ & \multirow{2}{*}{35.12} \\
& $\boldsymbol{\beta}=\{0.00223066,0.00268294,0.00367875,0.00541949,0.00825729,0.012765,0.019848,0.0309124,0.0481036,0.0748715,0.0959847,0.170428,0.2368,0.288017\}$ & \\
\hline
\multirow{2}{*}{150} & $\boldsymbol{\omega}=\{8671.64,5959.56,3144.72,1438.03,615.285,255.886,105.238,43.1569,17.7519,7.38476,3.24126,1.48992,0.772445,0.526323\}$ & \multirow{2}{*}{50.34} \\
& $\boldsymbol{\beta}=\{0.00151803,0.00187383,0.00266836,0.00408745,0.00646311,0.0103505,0.0166552,0.0268341,0.0431958,0.0694114,0.100573,0.169923,0.243817,0.302629\}$ & \\
\hline
\multirow{2}{*}{200} & $\boldsymbol{\omega}=\{15342.9,10203.7,5132.76,2234.12,912.112,362.768,142.862,56.1217,22.1054,8.79573,3.65467,1.59758,0.796262,0.528322\}$ & \multirow{2}{*}{64.84} \\
& $\boldsymbol{\beta}=\{0.00115001,0.00144748,0.00211891,0.00333763,0.00541838,0.00889808,0.0146726,0.0242198,0.0399456,0.0657302,0.101006,0.169306,0.248859,0.313891\}$ & \\
\hline
\multirow{2}{*}{250} & $\boldsymbol{\omega}=\{23881.3,15470.9,7496.09,3140.75,1236.83,475.305,181.025,68.7946,26.2065,10.0762,4.02155,1.69204,0.816583,0.529997\}$ & \multirow{2}{*}{78.79} \\
& $\boldsymbol{\beta}=\{0.000924757,0.00118231,0.00176872,0.00284695,0.00471686,0.00789814,0.0132733,0.0223266,0.0375256,0.0629164,0.100286,0.168614,0.252701,0.323018\}$ & \\
\hline
\multirow{2}{*}{300} & $\boldsymbol{\omega}=\{34278.1,21725.7,10207.4,4146.18,1585.72,592.623,219.656,81.2544,30.1232,11.2643,4.35503,1.7766,0.834356,0.531439\}$ & \multirow{2}{*}{92.3} \\
& $\boldsymbol{\beta}=\{0.000772581,0.00100079,0.00152417,0.002497,0.00420622,0.00715567,0.0122136,0.020863,0.0356119,0.0606376,0.0992048,0.167894,0.25575,0.330669\}$ & \\
\hline
\multirow{2}{*}{350} & $\boldsymbol{\omega}=\{46526.,28939.3,13245.9,5241.76,1956.08,714.096,258.693,93.545,33.8951,12.3814,4.66283,1.85345,0.850189,0.532707\}$ & \multirow{2}{*}{105.45} \\
& $\boldsymbol{\beta}=\{0.000662864,0.000868418,0.00134284,0.00223296,0.00381437,0.00657652,0.0113733,0.0196825,0.0340395,0.0587264,0.0980248,0.16717,0.258242,0.337243\}$ & \\
\hline
\multirow{2}{*}{400} & $\boldsymbol{\omega}=\{60618.7,37088.,16594.7,6420.75,2345.86,839.25,298.085,105.695,37.5482,13.4414,4.95006,1.92411,0.864496,0.533839\}$ & \multirow{2}{*}{118.29} \\
& $\boldsymbol{\beta}=\{0.000580015,0.000767473,0.00120256,0.00202561,0.00350221,0.00610869,0.010685,0.0187016,0.0327121,0.0570842,0.0968455,0.166458,0.260328,0.343\}$ & \\
\hline
\multirow{2}{*}{450} & $\boldsymbol{\omega}=\{76550.6,46151.6,20240.1,7677.7,2753.45,967.72,337.793,117.724,41.1009,14.4541,5.22033,1.98968,0.877567,0.534862\}$ & \multirow{2}{*}{130.87} \\
& $\boldsymbol{\beta}=\{0.000515249,0.000687865,0.00109052,0.00185784,0.00324648,0.00572076,0.0101073,0.017868,0.0315686,0.0556475,0.0957059,0.165764,0.262105,0.348115\}$ & \\
\hline
\multirow{2}{*}{500} & $\boldsymbol{\omega}=\{94317.,56112.2,24170.1,9008.13,3177.53,1099.21,377.785,129.647,44.5668,15.4266,5.47629,2.05098,0.889618,0.535797\}$ & \multirow{2}{*}{143.20} \\
& $\boldsymbol{\beta}=\{0.000463236,0.000623426,0.000998804,0.00171891,0.00303236,0.00539248,0.00961327,0.0171471,0.0305679,0.054373,0.0946203,0.165092,0.263642,0.352715\}$ & \\
\hline
\multirow{2}{*}{550} & $\boldsymbol{\omega}=\{113913.,66954.2,28374.8,10408.2,3617.01,1233.48,418.035,141.476,47.9566,16.3643,5.71997,2.10865,0.900811,0.536657\}$ & \multirow{2}{*}{155.31} \\
& $\boldsymbol{\beta}=\{0.000420555,0.000570165,0.00092222,0.0016017,0.00284993,0.00511011,0.00918421,0.0165148,0.0296809,0.0532295,0.0935924,0.164444,0.264987,0.356893\}$ & \\
\hline
\multirow{2}{*}{600} & $\boldsymbol{\omega}=\{135336.,78663.8,32845.3,11874.7,4070.95,1370.32,458.522,153.221,51.2786,17.2714,5.95295,2.16317,0.91127,0.537454\}$ & \multirow{2}{*}{167.24} \\
& $\boldsymbol{\beta}=\{0.000384908,0.000525385,0.000857231,0.0015013,0.00269227,0.00486396,0.00880696,0.015954,0.0288864,0.0521941,0.0926211,0.163818,0.266176,0.360718\}$ & \\
\hline
\multirow{2}{*}{650} & $\boldsymbol{\omega}=\{158580.,91228.1,37573.6,13404.7,4538.56,1509.57,499.227,164.888,54.5397,18.1514,6.17652,2.21494,0.921097,0.538198\}$ & \multirow{2}{*}{178.98} \\
& $\boldsymbol{\beta}=\{0.000354693,0.000487197,0.000801333,0.0014142,0.00255438,0.00464697,0.00847177,0.0154516,0.0281686,0.0512493,0.0917036,0.163215,0.267237,0.364244\}$ & \\
\hline
\multirow{2}{*}{700} & $\boldsymbol{\omega}=\{183644.,104636.,42552.7,14995.7,5019.15,1651.07,540.135,176.484,57.7456,19.0071,6.39171,2.26428,0.930369,0.538894\}$ & \multirow{2}{*}{190.56} \\
& $\boldsymbol{\beta}=\{0.000328761,0.000454234,0.000752702,0.00133783,0.00243256,0.00445388,0.00817133,0.0149979,0.0275151,0.0503814,0.090836,0.162634,0.268191,0.367514\}$ & \\
\hline
\multirow{2}{*}{750} & $\boldsymbol{\omega}=\{210523.,118876.,47776.4,16645.5,5512.09,1794.7,581.231,188.016,60.9012,19.8408,6.59939,2.31147,0.939153,0.53955\}$ & \multirow{2}{*}{201.99} \\
& $\boldsymbol{\beta}=\{0.000306266,0.000425485,0.000709977,0.00127025,0.00232401,0.00428065,0.00789997,0.0145854,0.0269165,0.0495796,0.0900146,0.162073,0.269053,0.370562\}$ & \\
\hline
\multirow{2}{*}{800} & $\boldsymbol{\omega}=\{239214.,133939.,53238.8,18352.1,6016.86,1940.35,622.504,199.487,64.0103,20.6544,6.80028,2.35671,0.947503,0.540169\}$ & \multirow{2}{*}{213.28} \\
& $\boldsymbol{\beta}=\{0.000286569,0.000400187,0.00067212,0.00120995,0.00222654,0.00412413,0.00765326,0.0142079,0.026365,0.0488352,0.0892357,0.161532,0.269837,0.373415\}$ & \\
\hline
\multirow{2}{*}{850} & $\boldsymbol{\omega}=\{269715.,149816.,58934.6,20113.6,6532.95,2087.9,663.944,210.901,67.0766,21.4498,6.99499,2.40021,0.955464,0.540756\}$ & \multirow{2}{*}{224.44} \\
& $\boldsymbol{\beta}=\{0.000269181,0.000377748,0.000638326,0.00115578,0.00213845,0.00398185,0.00742767,0.0138606,0.0258544,0.048141,0.0884956,0.161009,0.270553,0.376097\}$ & \\
\hline
\multirow{2}{*}{900} & $\boldsymbol{\omega}=\{302024.,166499.,64858.9,21928.4,7059.93,2237.28,705.541,222.262,70.1032,22.2283,7.18405,2.44212,0.963074,0.541314\}$ & \multirow{2}{*}{235.47} \\
& $\boldsymbol{\beta}=\{0.000253722,0.000357707,0.00060796,0.00110682,0.00205836,0.00385178,0.00722033,0.0135396,0.0253796,0.0474912,0.0877914,0.160504,0.271211,0.378626\}$ & \\
\hline
\multirow{2}{*}{950} & $\boldsymbol{\omega}=\{336137.,183978.,71007.3,23794.9,7597.41,2388.4,747.287,233.574,73.0926,22.9911,7.36793,2.48257,0.970367,0.541846\}$ & \multirow{2}{*}{246.38} \\
& $\boldsymbol{\beta}=\{0.000239888,0.000339698,0.000580514,0.00106231,0.00198518,0.00373232,0.00702889,0.0132417,0.0249364,0.0468808,0.0871199,0.160016,0.271817,0.38102\}$ & \\
\hline
\multirow{2}{*}{1000} & $\boldsymbol{\omega}=\{372052.,202248.,77375.6,25711.8,8145.01,2541.19,789.175,244.838,76.0472,23.7394,7.54701,2.52169,0.97737,0.542354\}$ & \multirow{2}{*}{257.19} \\
& $\boldsymbol{\beta}=\{0.000227438,0.000323423,0.000555578,0.00102166,0.001918,0.0036221,0.0068514,0.0129641,0.0245213,0.0463056,0.0864787,0.159543,0.272378,0.38329\}$ & \\
\hline
\hline
\multirow{2}{*}{32} & $\boldsymbol{\omega}=\{402.8,318.533,208.846,120.929,65.1531,33.8031,17.2446,8.7701,4.50164,2.36779,1.58821,1.06413,0.669773,0.517215\}$ & \multirow{2}{*}{12.47} \\
& $\boldsymbol{\beta}=\{0.00647086,0.0073607,0.00926188,0.0124338,0.0173072,0.0245363,0.035068,0.0502174,0.0717549,0.104058,0.0350767,0.168979,0.212666,0.24481\}$ & \\
\hline
\multirow{2}{*}{64} & $\boldsymbol{\omega}=\{1598.72,1200.1,724.027,381.957,187.598,88.992,41.5807,19.3493,9.04533,4.29382,2.29786,1.25078,0.716642,0.521476\}$ & \multirow{2}{*}{23.51} \\
& $\boldsymbol{\beta}=\{0.00338329,0.0039658,0.00523063,0.00739484,0.0108296,0.0161218,0.0241681,0.0363079,0.0545086,0.0822675,0.082198,0.170812,0.229322,0.27349\}$ & \\
\hline
\multirow{2}{*}{128} & $\boldsymbol{\omega}=\{6330.52,4427.82,2398.11,1126.86,494.844,210.953,88.8667,37.3202,15.7238,6.70433,3.03854,1.43707,0.760478,0.525303\}$ & \multirow{2}{*}{43.75} \\
& $\boldsymbol{\beta}=\{0.00176623,0.00215768,0.00302688,0.00456605,0.00711566,0.0112388,0.0178429,0.0283674,0.0450598,0.0714887,0.0994468,0.170158,0.241043,0.296722\}$ & \\
\hline
\multirow{2}{*}{256} & $\boldsymbol{\omega}=\{25031.,16170.,7803.54,3256.38,1277.48,489.145,185.638,70.2999,26.6852,10.2232,4.06315,1.70266,0.818836,0.530181\}$ & \multirow{2}{*}{80.43} \\
& $\boldsymbol{\beta}=\{0.000903457,0.00115703,0.00173491,0.00279897,0.00464739,0.00779789,0.0131313,0.022132,0.0372734,0.0626189,0.100168,0.168529,0.253103,0.324005\}$ & \\
\hline
\multirow{2}{*}{512} & $\boldsymbol{\omega}=\{98853.4,58634.4,25154.6,9337.93,3281.64,1131.19,387.422,132.494,45.3869,15.6547,5.53583,2.06513,0.892377,0.536009\}$ & \multirow{2}{*}{146.12} \\
& $\boldsymbol{\beta}=\{0.000452243,0.000609742,0.000979198,0.00168901,0.00298598,0.00532095,0.00950493,0.016988,0.0303456,0.0540876,0.0943683,0.164935,0.26398,0.353754\}$ & \\
\hline
\multirow{2}{*}{$1024$} & $\boldsymbol{\omega}=\{389930.,211296.,80509.3,26649.4,8411.36,2615.1,809.329,250.229,77.4537,24.0938,7.63137,2.54003,0.980636,0.542591\}$ & \multirow{2}{*}{262.34} \\
& $\boldsymbol{\beta}=\{0.000221894,0.000316155,0.000544398,0.00100336,0.00188766,0.00357215,0.00677068,0.0128374,0.0243311,0.0460409,0.086181,0.159321,0.272632,0.38434\}$ & \\
\hline
\multirow{2}{*}{2048} & $\boldsymbol{\omega}=\{1536240,757099.,256177.,75783.7,21526.3,6043.96,1691.56,473.123,132.43,37.1993,10.5906,3.1509,1.08412,0.549805\}$ & \multirow{2}{*}{465.33} \\
& $\boldsymbol{\beta}=\{0.000106867,0.000161473,0.000298341,0.000587241,0.00117546,0.00236283,0.00475453,0.00956894,0.0192541,0.038699,0.077294,0.152063,0.278729,0.414945\}$ & \\
\hline
\multirow{2}{*}{4096} & $\boldsymbol{\omega}=\{6045072,2698661,811234.,214918.,55022.7,13963.2,3535.64,894.898,226.617,57.5285,14.7527,3.93026,1.20411,0.557585\}$ & \multirow{2}{*}{815.34} \\
& $\boldsymbol{\beta}=\{0.0000505714,0.0000812954,0.000161234,0.000338826,0.000721677,0.00154174,0.00329580,0.00704624,0.0150625,0.0321750,0.0684563,0.143616,0.282283,0.445170\}$ & \\
\hline
\end{tabular}
\label{table:weiP14_1}
\end{center}
\end{table}
%\end{landscape}
%

%\begin{landscape}
\begin{table}
\caption{Parameters for optimized $P=15$ SRJ schemes for various values of $N$.}
\tiny 
\begin{center}
%\bigskip
\begin{tabular}{c c c c}
\hline
N & Optimal Scheme Parameters & $\rho$  \\
\hline
\multirow{2}{*}{100} & $\boldsymbol{\omega}=\{3900.7,2917.56,1750.24,917.535,447.788,211.033,97.8831,45.1243,20.8015,9.63811,5.19063,2.69523,1.30463,0.729676,0.52263\}$ & \multirow{2}{*}{35.62} \\
& $\boldsymbol{\beta}=\{0.00211354,0.00248261,0.00328491,0.00466027,0.00684846,0.0102307,0.015395,0.0232377,0.0351381,0.0539546,0.0440768,0.115928,0.169266,0.232834,0.28055\}$ & \\
\hline
\multirow{2}{*}{150} & $\boldsymbol{\omega}=\{8731.9,6299.82,3574.97,1764.89,812.568,362.251,159.248,69.6404,30.449,13.37,6.43047,3.027,1.40229,0.752546,0.524622\}$ & \multirow{2}{*}{51.31} \\
& $\boldsymbol{\beta}=\{0.00143811,0.00172513,0.00235617,0.00345693,0.00524685,0.00808252,0.0125283,0.0194675,0.0302848,0.0474906,0.0542321,0.11255,0.169529,0.238971,0.29264\}$ & \\
\hline
\multirow{2}{*}{200} & $\boldsymbol{\omega}=\{15459.6,10836.2,5887.55,2775.66,1222.6,522.574,220.57,92.6754,38.9339,16.42,7.3844,3.28605,1.47625,0.769405,0.526065\}$ & \multirow{2}{*}{66.29} \\
& $\boldsymbol{\beta}=\{0.00109279,0.0013333,0.00186699,0.0028111,0.00437317,0.00689647,0.010935,0.017374,0.0276256,0.0441322,0.0577854,0.110203,0.169452,0.243027,0.301093\}$ & \\
\hline
\multirow{2}{*}{250} & $\boldsymbol{\omega}=\{24074.2,16482.5,8649.12,3932.14,1672.98,691.895,282.873,115.18,46.8951,19.1619,8.22785,3.52027,1.54157,0.783992,0.527297\}$ & \multirow{2}{*}{80.75} \\
& $\boldsymbol{\beta}=\{0.00088128,0.00109027,0.00155754,0.00239368,0.00379676,0.006099,0.00984518,0.0159203,0.0257581,0.0418003,0.0589403,0.108288,0.169245,0.246258,0.308126\}$ & \\
\hline
\multirow{2}{*}{300} & $\boldsymbol{\omega}=\{34568.,23204.9,11831.2,5221.34,2159.63,869.493,346.353,137.457,54.5501,21.7213,9.00657,3.73816,1.60113,0.797059,0.528388\}$ & \multirow{2}{*}{94.79} \\
& $\boldsymbol{\beta}=\{0.000738052,0.000923817,0.00134184,0.00209712,0.00337956,0.00551145,0.00902828,0.0148121,0.0243112,0.0399833,0.0591017,0.106635,0.168961,0.248953,0.314222\}$ & \\
\hline
\multirow{2}{*}{350} & $\boldsymbol{\omega}=\{46934.7,30975.9,15411.5,6633.06,2679.17,1054.58,410.977,159.617,61.9913,24.1524,9.7385,3.94279,1.65611,0.808933,0.52937\}$ & \multirow{2}{*}{108.48} \\
& $\boldsymbol{\beta}=\{0.00063454,0.000802316,0.00118197,0.00187368,0.00306013,0.00505457,0.00838329,0.0139235,0.0231325,0.0384872,0.058814,0.105172,0.168631,0.251248,0.319602\}$ & \\
\hline
\multirow{2}{*}{400} & $\boldsymbol{\omega}=\{61168.7,39772.5,19371.5,8159.09,3228.87,1246.47,476.666,181.708,69.2673,26.4843,10.4333,4.13611,1.70724,0.819823,0.530261\}$ & \multirow{2}{*}{121.87} \\
& $\boldsymbol{\beta}=\{0.000556209,0.000709559,0.00105828,0.00169833,0.00280596,0.0046861,0.00785612,0.0131873,0.0221419,0.0372151,0.0583153,0.103854,0.168272,0.253232,0.324411\}$ & \\
\hline
\multirow{2}{*}{450} & $\boldsymbol{\omega}=\{77265.3,49575.,23696.,9792.69,3806.5,1444.59,543.333,203.752,76.408,28.7354,11.0971,4.3196,1.7551,0.829887,0.531078\}$ & \multirow{2}{*}{135.00} \\
& $\boldsymbol{\beta}=\{0.000494861,0.000636339,0.000959489,0.00155652,0.00259789,0.0043809,0.00741432,0.0125628,0.0212909,0.0361099,0.0577191,0.102656,0.167899,0.25497,0.328752\}$ & \\
\hline
\multirow{2}{*}{500} & $\boldsymbol{\omega}=\{95220.,60366.1,28371.8,11528.2,4410.2,1648.44,610.9,225.763,83.4336,30.9182,11.7345,4.49446,1.80013,0.839249,0.531833\}$ & \multirow{2}{*}{147.89} \\
& $\boldsymbol{\beta}=\{0.000445513,0.00057702,0.000878606,0.00143911,0.00242379,0.00412284,0.00703687,0.0120236,0.0205478,0.0351345,0.0570832,0.101557,0.167517,0.256507,0.332706\}$ & \\
\hline
\multirow{2}{*}{550} & $\boldsymbol{\omega}=\{115029.,72130.4,33387.7,13360.8,5038.38,1857.63,679.299,247.749,90.3589,33.0419,12.3487,4.66169,1.8427,0.848005,0.532533\}$ & \multirow{2}{*}{160.58} \\
& $\boldsymbol{\beta}=\{0.00040496,0.000527957,0.000811066,0.0013401,0.00227553,0.00390105,0.00670944,0.0115513,0.0198904,0.034263,0.0564379,0.100541,0.167134,0.25788,0.336333\}$ & \\
\hline
\multirow{2}{*}{600} & $\boldsymbol{\omega}=\{136689.,84854.1,38733.6,15286.4,5689.71,2071.79,748.469,269.715,97.1953,35.1137,12.9426,4.82212,1.88311,0.856234,0.533187\}$ & \multirow{2}{*}{173.08} \\
& $\boldsymbol{\beta}=\{0.000371046,0.000486682,0.000753751,0.00125532,0.00214748,0.00370786,0.00642182,0.0111329,0.0193026,0.0334767,0.0557995,0.099597,0.166751,0.259115,0.339681\}$ & \\
\hline
\multirow{2}{*}{650} & $\boldsymbol{\omega}=\{160196.,98524.8,44400.5,17301.1,6363.,2290.64,818.358,291.664,103.952,37.1391,13.5182,4.97644,1.92159,0.863999,0.5338\}$ & \multirow{2}{*}{185.41} \\
& $\boldsymbol{\beta}=\{0.000342267,0.000451462,0.000704454,0.00118179,0.00203555,0.00353769,0.00616654,0.0107586,0.0187724,0.0327614,0.0551766,0.0987156,0.166373,0.260234,0.342789\}$ & \\
\hline
\multirow{2}{*}{700} & $\boldsymbol{\omega}=\{185548.,113131.,50380.6,19401.8,7057.24,2513.9,888.92,313.598,110.635,39.1228,14.0774,5.12526,1.95835,0.871356,0.534377\}$ & \multirow{2}{*}{197.58} \\
& $\boldsymbol{\beta}=\{0.000317542,0.000421048,0.000661567,0.00111734,0.00193672,0.00338637,0.00593793,0.010421,0.0182905,0.0321064,0.0545736,0.0978891,0.165999,0.261253,0.345689\}$ & \\
\hline
\multirow{2}{*}{750} & $\boldsymbol{\omega}=\{212741.,128662.,56666.5,21585.6,7771.5,2741.35,960.116,335.519,117.253,41.0685,14.6216,5.26907,1.99358,0.878346,0.534923\}$ & \multirow{2}{*}{209.60} \\
& $\boldsymbol{\beta}=\{0.000296072,0.000394511,0.000623891,0.00106032,0.0018487,0.00325072,0.00573167,0.0101143,0.0178497,0.0315029,0.0539927,0.0971112,0.165631,0.262187,0.348405\}$ & \\
\hline
\multirow{2}{*}{800} & $\boldsymbol{\omega}=\{241773.,145109.,63251.7,23849.8,8504.98,2972.78,1031.91,357.429,123.809,42.9795,15.1522,5.40832,2.0274,0.88501,0.53544\}$ & \multirow{2}{*}{221.48} \\
& $\boldsymbol{\beta}=\{0.000277257,0.00037115,0.000590511,0.00100947,0.00176971,0.00312826,0.00554433,0.00983409,0.0174443,0.030944,0.0534343,0.0963765,0.16527,0.263047,0.350959\}$ & \\
\hline
\multirow{2}{*}{850} & $\boldsymbol{\omega}=\{272642.,162462.,70130.,26192.,9256.94,3208.01,1104.27,379.329,130.308,44.8586,15.6702,5.54338,2.05996,0.891377,0.535932\}$ & \multirow{2}{*}{233.24} \\
& $\boldsymbol{\beta}=\{0.000260633,0.000350423,0.000560715,0.000963807,0.00169836,0.00301701,0.00537318,0.0095766,0.0170695,0.0304242,0.0528984,0.0956807,0.164917,0.263841,0.353369\}$ & \\
\hline
\multirow{2}{*}{900} & $\boldsymbol{\omega}=\{305344.,180712.,77296.,28610.2,10026.7,3446.87,1177.17,401.219,136.753,46.708,16.1767,5.67458,2.09136,0.897476,0.536401\}$ & \multirow{2}{*}{244.87} \\
& $\boldsymbol{\beta}=\{0.000245842,0.000331905,0.000533945,0.000922543,0.00163353,0.00291539,0.00521603,0.00933889,0.0167216,0.0299387,0.0523842,0.0950198,0.16457,0.264579,0.355649\}$ & \\
\hline
\multirow{2}{*}{950} & $\boldsymbol{\omega}=\{339879.,199853.,84744.3,31102.1,10813.7,3689.21,1250.58,423.1,143.15,48.5298,16.6725,5.80222,2.12169,0.903331,0.53685\}$ & \multirow{2}{*}{256.39} \\
& $\boldsymbol{\beta}=\{0.000232596,0.00031526,0.000509752,0.000885049,0.00157432,0.00282213,0.00507108,0.0091185,0.0163973,0.0294837,0.0518908,0.0943906,0.16423,0.265265,0.357814\}$ & \\
\hline
\multirow{2}{*}{1000} & $\boldsymbol{\omega}=\{376243.,219876.,92470.3,33666.1,11617.4,3934.9,1324.47,444.973,149.499,50.326,17.1582,5.92656,2.15105,0.908961,0.537279\}$ & \multirow{2}{*}{267.81} \\
& $\boldsymbol{\beta}=\{0.000220668,0.000300214,0.000487773,0.000850813,0.00152,0.00273615,0.00493681,0.0089134,0.016094,0.0290559,0.0514173,0.0937903,0.163898,0.265906,0.359873\}$ & \\
\hline
\hline
\multirow{2}{*}{64} & $\boldsymbol{\omega}=\{1604.55,1236.6,777.72,429.57,220.699,109.268,53.1653,25.7023,12.4395,6.06839,3.77684,2.26342,1.17188,0.697364,0.519746\}$ & \multirow{2}{*}{23.72} \\
& $\boldsymbol{\beta}=\{0.00324844,0.00375019,0.00483085,0.00665688,0.00950942,0.0138266,0.0202681,0.0298105,0.0439172,0.0661899,0.0257826,0.120006,0.167699,0.222552,0.261952\}$ & \\
\hline
\multirow{2}{*}{128} & $\boldsymbol{\omega}=\{6371.83,4666.19,2709.05,1370.56,646.134,294.622,132.367,59.1371,26.4159,11.8529,5.94244,2.89717,1.36449,0.743778,0.523863\}$ & \multirow{2}{*}{44.51} \\
& $\boldsymbol{\beta}=\{0.00167178,0.00198802,0.00268004,0.00387851,0.00580983,0.00883865,0.0135361,0.0207864,0.031965,0.0496738,0.0510383,0.113836,0.169489,0.236713,0.288096\}$ & \\
\hline
\multirow{2}{*}{256} & $\boldsymbol{\omega}=\{25234.4,17233.,9009.26,4080.06,1729.54,712.786,290.428,117.862,47.8274,19.4772,8.32423,3.5472,1.54899,0.785632,0.527435\}$ & \multirow{2}{*}{82.46} \\
& $\boldsymbol{\beta}=\{0.000861253,0.0010671,0.00152771,0.00235298,0.00373991,0.00601951,0.00973541,0.0157724,0.0255664,0.0415604,0.0589968,0.108078,0.169214,0.246607,0.308902\}$ & \\
\hline
\multirow{2}{*}{512} & $\boldsymbol{\omega}=\{99805.2,63101.3,29545.,11959.4,4558.78,1698.18,627.242,231.042,85.1043,31.433,11.8839,4.53525,1.81056,0.841402,0.532005\}$ & \multirow{2}{*}{150.96} \\
& $\boldsymbol{\beta}=\{0.000435073,0.000564418,0.000861316,0.00141386,0.00238609,0.00406664,0.00695417,0.0119047,0.0203829,0.0349166,0.0569284,0.101306,0.167425,0.256851,0.333604\}$ & \\
\hline
\multirow{2}{*}{$1024$} & $\boldsymbol{\omega}=\{394347.,229799.,96276.,34921.9,12008.9,4053.99,1360.11,455.47,152.531,51.1795,17.388,5.98513,2.16481,0.91159,0.537479\}$ & \multirow{2}{*}{273.25} \\
& $\boldsymbol{\beta}=\{0.000215354,0.000293494,0.00047792,0.00083541,0.00149547,0.0026972,0.00487579,0.00881986,0.0159551,0.0288594,0.0511968,0.0935117,0.163741,0.266199,0.360827\}$ & \\
\hline
\multirow{2}{*}{$2048$} & $\boldsymbol{\omega}=\{1556575.,832736.,312142.,101721.,31639.4,9698.49,2959.69,902.095,274.961,83.9203,25.799,8.03399,2.62374,0.99542,0.543653\}$ & \multirow{2}{*}{489.09} \\
& $\boldsymbol{\beta}=\{0.000104643,0.000150277,0.000261282,0.000485959,0.000922098,0.0017595,0.00336258,0.00642874,0.0122909,0.0234935,0.0445452,0.0851626,0.158288,0.273725,0.38902\}$ & \\
\hline
\end{tabular}
\label{table:weiP15_1}
\end{center}
\end{table}
%\end{landscape}

\subsection{New SRJ optimal schemes}
\label{sec:newSRJ}

After verifying that we recover the optimal parameters computed in YM14, we have improved on their results computing the
optimal values of SRJ schemes with $P>5$. In \ref{sec:P2_P13} we provide the
Tables\,\ref{table:weiP6_1}~to~\ref{table:weiP13_2}, corresponding to the optimal parameters for SRJ schemes with
$P=6,\ldots,13$ and various resolutions for the Laplace problem (Eq.\,\ref{eq:Laplace}). In Tables \ref{table:weiP14_1},
and \ref{table:weiP15_1}, we show the optimal solution parameters for $P=14$ and $P=15$.%
\footnote{All the files needed to efficiently implement any of the SRJ algorithms shown in this paper can be found
    at \texttt{http://www.uv.es/camap/SRJ.html}.}
We encountered that
finding optimal parameters at low resolution is increasingly more difficult as the number of levels increases. Indeed,
as we can see, for $P=15$ the minimum value of $N$ we have been able to compute is 64. The reason for the inability of
the proposed method to find optimal parameters for low $N$ and large $P$ is that larger values of $P$ imply that the
results are extremely sensitive to tiny changes in the smaller wave numbers (i.e., to the values of $\kappa_i$ close to
$\kappa_m$), and small numerical errors prevent a full evaluation of the solution of the non-linear system ${\cal S}$,
unless the (guessed) initial values are very close to the optimal ones.
\begin{figure*}[h]
\centering
\includegraphics[width=0.9\textwidth]{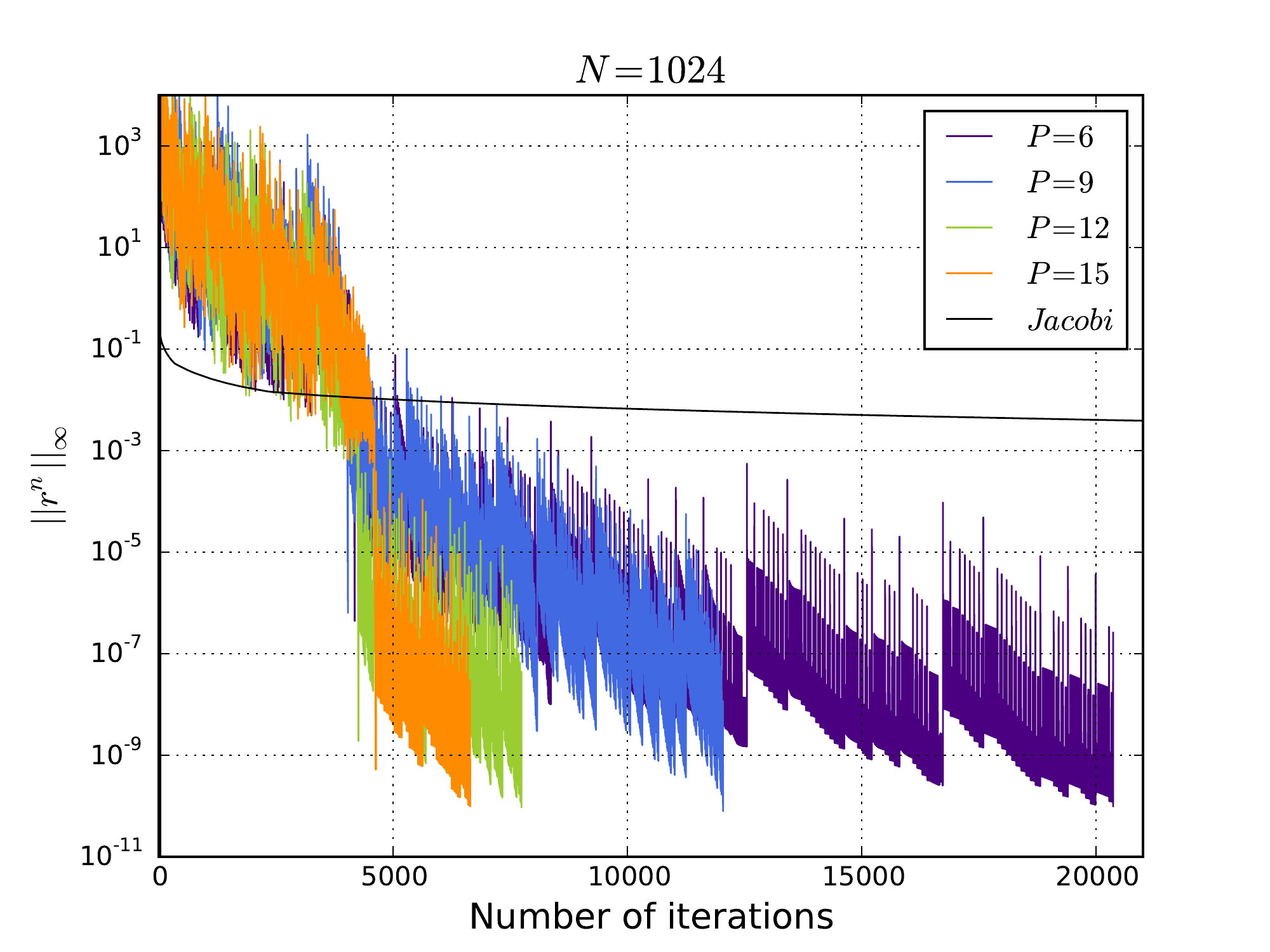} \\
\caption{Comparison of the residual evolution for the optimal SRJ
  schemes with $P=6, 9, 12$ and 15 and $N=1024$ points per
  dimension. For reference, we also include the evolution of the
  residual for the Jacobi method (black line).  }
\label{fig:P6toP15}
\end{figure*}

We remark that thanks to the improvements done (Sect.~\ref{subsec:preImpVar})
and specially with the analytic solution of a part of the unknowns of
the system (Sect.~\ref{subsec:analyticalManipulations}), not only
the optimal solution is achievable, but also it is reachable with a
moderate computational cost: employing Mathematica on a standard
workstation, the computational time of the optimal parameters ranges
from tenths of a second for the $P=6$ scheme to tens of seconds for
$P=15$.

In Fig.\,\ref{fig:P6toP15} we show the evolution of the residual for
some of the new optimal SRJ schemes solving the model problem used
throughout the paper. These new schemes show a progressively larger
efficiency as $P$ grows. A good proxy for the performance of the
method is the convergence performance index, which grows with the
number of levels. We achieve a reduction in the time of computation to
solve the problem because of the reduction in the number of iterations
to reach convergence.  This reduction is roughly proportional to
$P\log_{10}(P+1)$. However, the rate of reduction of the error is non
monotonic. For large values of $P$ (namely, $P\ge 12$), a direct inspection of
Fig.\,\ref{fig:P6toP15} shows a faster decline of the residual once
any given SRJ method reduces its residual below the one
corresponding to the Jacobi method (in this case, this happens after
about 4.500 iterations).

%%%%%%%%%%%%%%%%%%%%%%%%%%%%%%%%%%%%%%%%%%%%%%%%%%%%%%%%%%%%%%%%%%%%%
\subsection{Obtention of the integer parameters $\boldsymbol{q}$} 
\label{sec:real2integer}

There is a step in the practical implementation of SRJ methods that
may impact on the performance of the resulting algorithm, measured by
the number of iterations needed to reduce the residual below a
prescribed tolerance.  Once the solution has been found and we know
the \emph{real} values of $\boldsymbol{\omega}$ and
$\boldsymbol{\beta}$, one must obtain the \emph{integer} values of
$\boldsymbol{q}$. The conversion to integer begins by defining
$\boldsymbol{\bar{\beta}}:=\frac{\boldsymbol{\beta}}{\beta_1}$, so
that $\beta_1 = q_1=1$. For the conversion to integer of the rest of
the $\bar{\beta}_i$ ($i=2,\ldots,P$), we have tested several
possibilities, including the floor $\boldsymbol{q} = \lfloor
\boldsymbol{\bar{\beta}} \rfloor$, rounding $\boldsymbol{\bar{\beta}}$
to the nearest integer, taking the ceiling function $\boldsymbol{q} =
\lceil \boldsymbol{\bar{\beta}} \rceil$, or combinations of the former
alternatives, since it is possible to apply different recipes for
every $\beta_i / \beta_1$. Each of these alternatives may yield a
different number of iterations to reach convergence (see below). 
\begin{figure}[H]
\centering
\begin{tabular}{cc}
\includegraphics[width=0.48\textwidth]{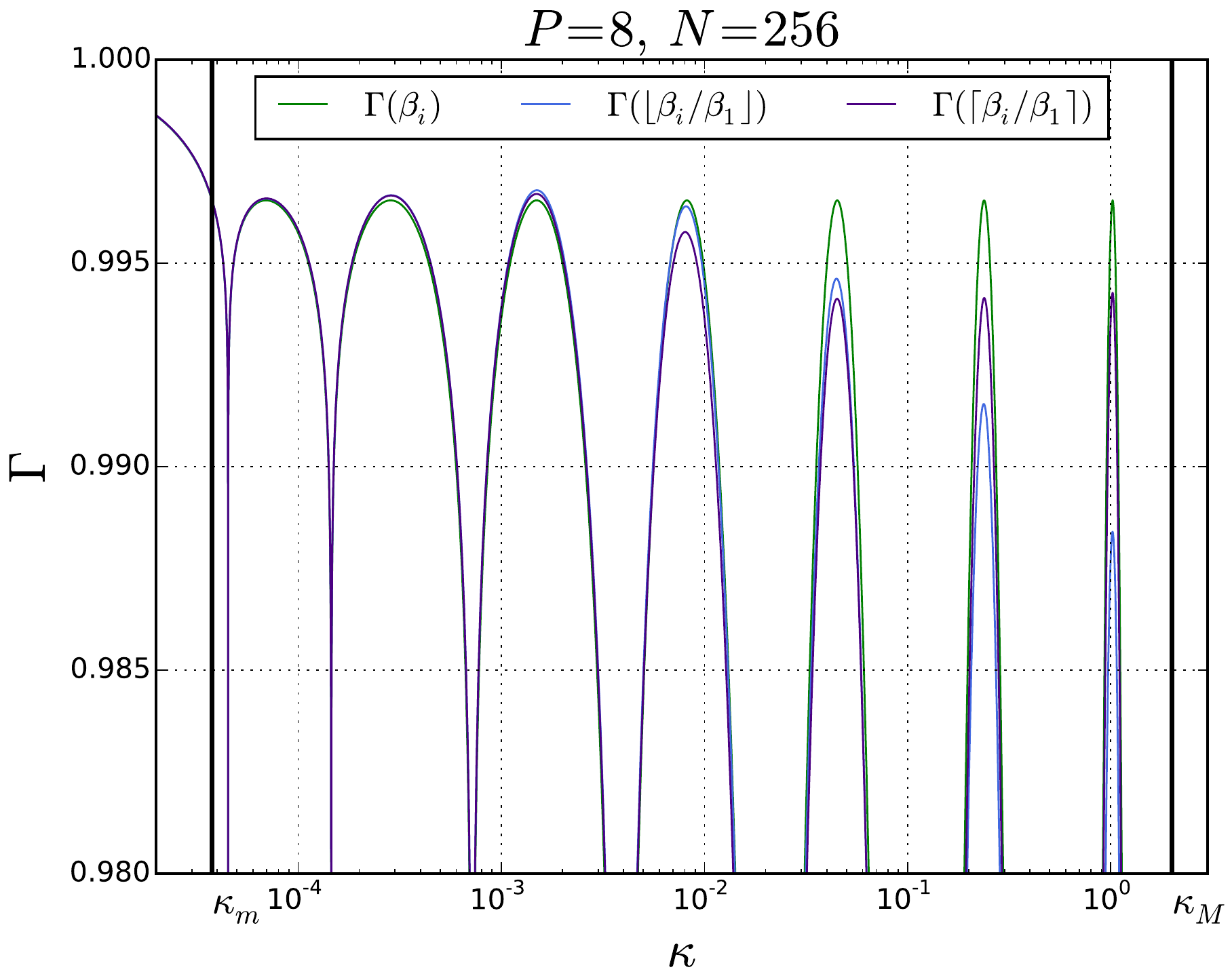} &
\includegraphics[width=0.48\textwidth]{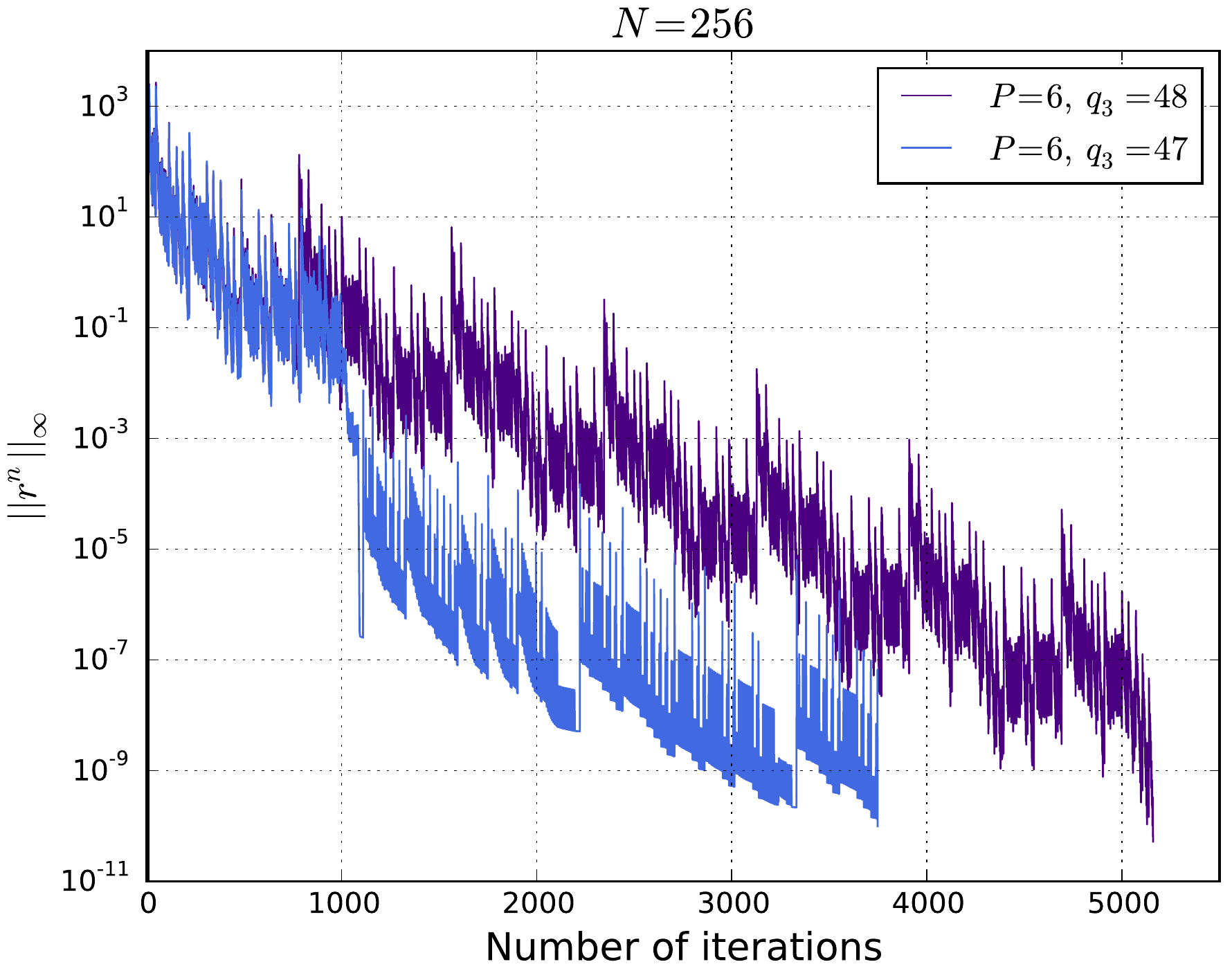} 
\end{tabular}
\caption{Comparison of the evolution of the residual of Eq.\,(\ref{eq:2nd-order_CD}) of two SRJ schemes having $P=6$
  and $N=256$ zones per dimension. Top: mean amplification factor per $M$-cycle. Bottom: evolution of the
  residual as a function of the number of iterations. The different variants of the $P=6$ SRJ method are displayed with
  different color lines showing the dependence of the performance of the method on the conversion of the real values of
  the optimal solution for $\beta_i$ to the integer values $q_i=\lfloor \beta_i / \beta_1 \rfloor$ (blue) and
  $q_i=\lceil \beta_i / \beta_1 \rceil$ (purple).  }
\label{fig:f_flo_to_int}
\end{figure}
After
computing the integer values of $\boldsymbol{q}$, a key point to
account for is that the $\Gamma(\kappa)$ function must remain below
1, since otherwise our method diverges. In Fig.~\ref
{fig:f_flo_to_int} (upper panel) we observe that the amplification
factor per $M$-cycle may change by more than 10\%, for values of
$\kappa$ close to $\kappa_M$, depending on the method adopted to
convert $\boldsymbol{\beta}$ to integer.

While the number of levels is small, the differences among the
distinct conversions from real to integer do not change much either
the number of iterations or the convergence rate of the resulting
scheme. However, when $P$ increases, there can be non-negligible
changes in the total number of iterations to reduce the residual of
our model equation below a prescribed tolerance. In the lower panel of the
Fig.~\ref{fig:f_flo_to_int} we show the evolution of the residual as a
function of the iteration number for two different choices of the
integer conversion of $\boldsymbol{\beta}$ into $\boldsymbol{q}$ in
the case $P=6$ (the optimal parameters of which can be found on
Tab.~\ref{table:weiP6_2}). We note that there is a difference of more
than 1200 iterations ($\sim 25\%$) between the distinct integer
conversions. Unfortunately, changing the number of levels, the same
recipes for converting reals to integers yield efficiencies of the
methods that do not display a clear trend. Fortunately, increasing the
number of levels by one unit results in a reduction of the number of
iterations to reach convergence which is larger than that resulting
from any manipulation of the integer values of $q_i$ in an SRJ scheme
with a given $P$. Hence, in the following, the results we will provide
are obtained by taking simply $q_i = \lfloor \beta_i / \beta_1
\rfloor$.

%%%%%%%%%%%%%%%%%%%%%%%%%%%%%%%%%%%%%%%%%%%%%%%%%%%%%%%%%%%%%%%%%%%%%
\section{Numerical examples}
\label{sec:examples}

%%%%%%%%%%%%%%%%%%%%%%%%%%%%%%%%%%%%%%%%%%%%%%%%%%%%%%%%%%%%%%%%%%%%%
\subsection{Poisson equation with Dirichlet boundaries}
\label{subsec:casestudy}

So far we have considered only the application of SRJ schemes to the
solution of the Laplace equation with homogeneous Neumann boundary
conditions (Eq.\,\ref{eq:Laplace}). In this section we consider a case
study consisting on solving a Poisson equation in two dimensions
endowed with Dirichlet boundary conditions. The exact problem setting
reads
\begin{eqnarray}
\frac{\partial^2}{\partial x^2} u(x,y) + \frac{\partial^2}{\partial y^2} u(x,y) =- e^{x y} (x^2 + y^2), & & (x,y) \in (0,1) \times (0,1),\nonumber \\
u(0,y) = -1,\,\, u(1,y) = -e^y, & & y \in [0, 1], \\ 
u(x,0) = -1,\,\, u(x,1) = -e^x, & & x \in [0, 1], \nonumber  
\label{eq:toyProblem}
\end{eqnarray}
which has analytic solution:
\begin{equation}
u(x,y) = -e^{x y}.
\end{equation}
This kind of problem will help us to assess whether the change in the boundary conditions affects the efficiency of an
SRJ scheme.  Imposing Dirichlet boundary conditions is typically more challenging than dealing with Neumann ones, since
Dirichlet boundary conditions change the value of $\kappa_m$ so that (YM14)
\begin{equation}
\kappa_{m,{\rm Dirichlet}} = \sin^2 \left(\frac{\pi}{2N_x}\right) + \sin^2 \left(\frac{\pi}{2N_y}\right)\, ,
\end{equation}
to be compared with Eq.\,(\ref{eq:boundaries}). Hence, the optimal SRJ values obtained for a given $N$ and Neumann
boundary conditions do not exactly coincide with those optimal in problems involving Dirichlet boundaries, hence we must
follow the recipe provided in Eq.\,(\ref{eq:effectiveNDirichlet}).
Furthermore, since in Eq.\,(\ref{eq:toyProblem}) we are considering a Poisson equation, we can test whether the presence
of source terms modifies the performance of SRJ methods.

For the case studied we choose a discretization consisting on $N_x
\times N_y = 585 \times 280$ uniform numerical zones. Although here we do not have Neumann boundary conditions, we
will use the optimal values of the SRJ scheme for this case in order to show that even though this choice is not
optimal, still it substantially speeds up the solution of the problem with respect to Jacobi. In this case, we
have that $N = 585$. If we apply the SRJ scheme with
$P=10$, we must look for the $\boldsymbol{\omega}$ and the
$\boldsymbol{\beta}$ parameters in Tab.\,\ref{table:weiP10_1}. In this
case, the table does not provide an entry for $N=585$, but as YM14
point out, we can chose as (non-optimal) parameters for the SRJ scheme
those corresponding to a smaller resolution.\footnote{It is, however,
  possible to compute the optimal values for $N=585$ employing our
  algorithm.} In our case, the closest resolution that matches this
criterion in Tab.\,\ref{table:weiP10_1} is that corresponding to the
row with $N=550$.

We use the simplest way to obtain the $q_i$ from the $\beta_i$
ensuring convergence, namely $q_i = \Big \lfloor
\frac{\beta_i}{\beta_1} \Big \rfloor$ with $1 \leq i \leq P$,
resulting in 
\begin{equation}
\boldsymbol{q} = \{1,1,3,9,21,49,116,268,587,1014\}, \, M = 2069.
\end{equation}

\begin{figure*}[h]
\centering
\includegraphics[width= 0.75\textwidth]{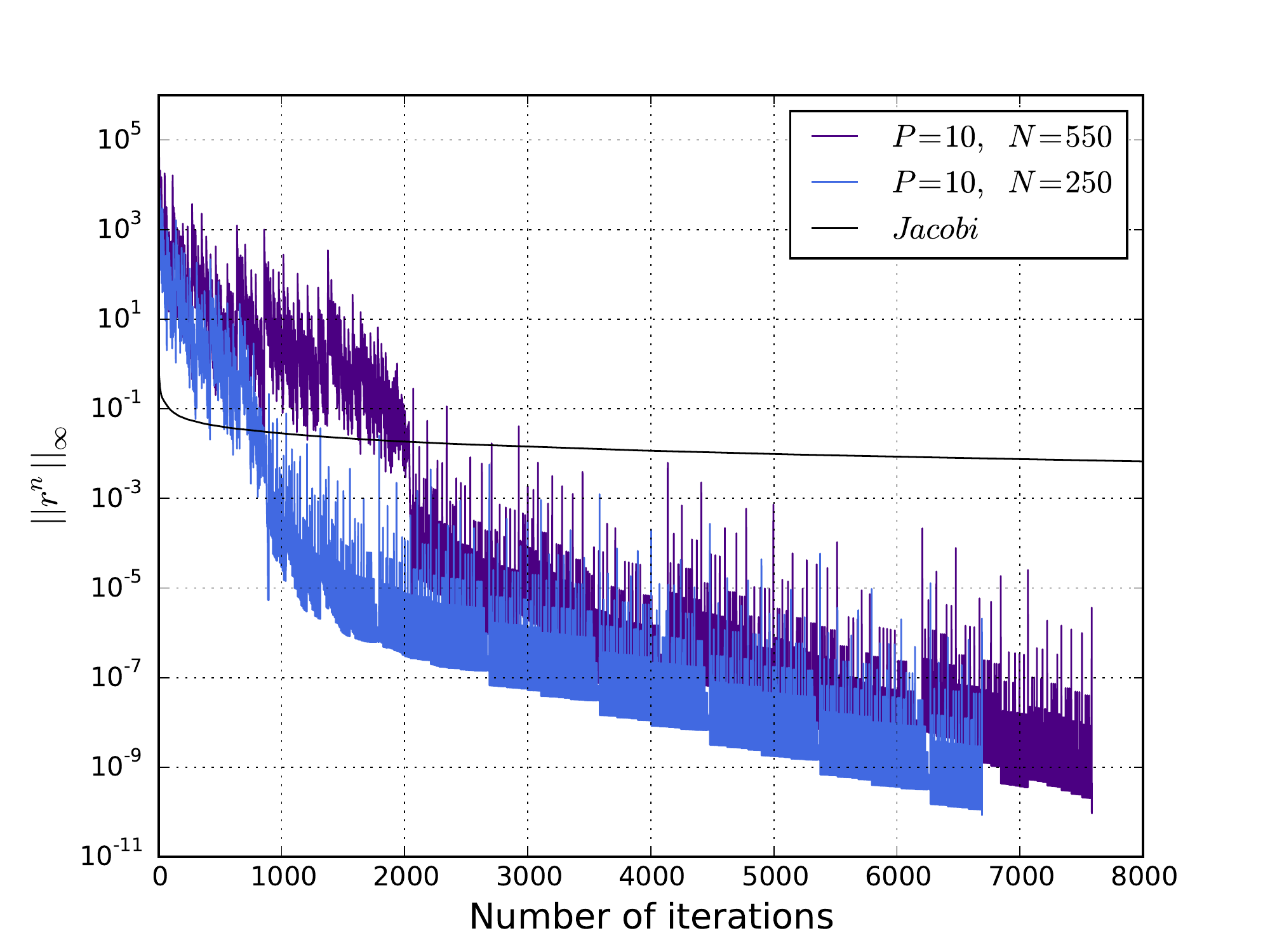} \\
\caption{Evolution of the residual as a function of the iteration number for different SRJ schemes. With violet line we
  show the case in which the parameters of an SRJ scheme with $P=10$, $N=550$ and Neumann boundary conditions are chosen
  to compute the solution of the problem stated in Eq.\,\eqref{eq:toyProblem}, on a grid of $N_x \times N_y = 585 \times
  280$ and {\em Dirichlet} boundary conditions. With blue line we show the case in which the the parameters of an SRJ
  scheme with $P=10$, $N=250$ and Neumann boundary conditions are used. The latter case corresponds to the closest value
  of $N$ to the effective resolution $N_{\rm eff}^{\rm(2)}=252.56$ that shall be used when Dirichlet boundaries (instead
  of Neumann ones) are used. For comparison, we also display the evolution of the residual for the Jacobi
  method.} 
\label{fig:example}
\end{figure*}

This means that we will use $\omega_1 = 106105$ and $\omega_2 =
40577.2$ once per $M$-cycle, $\omega_3 = 10230.6$ three times per
$M$-cycle, etc. In practice, it is necessary to distribute the
largest over-relaxation steps over the whole $M$-cycle to prevent the
occurrence of overflows. YM14 provide a Matlab script that generates a
schedule for the distribution of $\omega_i$ on the $M$-cycle that
guarantees the absence of overflows. We find that an even
distribution of the over-relaxations over the entire $M$-cycle is
sufficient in order to avoid overflows.

In Fig.~\ref{fig:example} we plot the evolution of the residual as a
function of the number of iterations for a SRJ scheme with $P=10,
N=550$ (instead of $N=585$), as well as the residual evolution
employing the Jacobi method for the solution of
Eq.~\eqref{eq:toyProblem}. This example shows that even picking an SRJ
scheme whose parameters are non-optimal for the problem size at hand
($N=585$ in this case), for the presence of source terms and for the kind of boundary conditions specified (Dirichlet for the problem at hand), we can largely speed up the convergence with
respect to the Jacobi method. Theoretically, for the optimal $P=10$,
$N=550$ SRJ method, an acceleration of the order of $\rho =125.85$
with respect to the Jacobi method is expected, something confirmed with our numerical results (Fig.~\ref{fig:example}). 

Finally, we briefly illustrate how to find the optimal scheme for Dirichlet boundaries (which are, indeed, the ones with
which the problem at hand is set). According to Eq.\,(\ref{eq:effectiveNDirichlet}), $N_{\rm eff}^{(2)} =
252.56$. Therefore, we choose the row with entry $N=250$ and we proceed as we have done previously. In
Fig.\,\ref{fig:example} we can see that the results are even better than for the (non-optimal) set of parameters with
$P=10$, $N=550$ and Neumann boundary conditions, since in the case with $N=250$, the accuracy goal is reached with $\sim
13\%$ less iterations.

%%%%%%%%%%%%%%%%%%%%%%%%%%%%%%%%%%%%%%%%%%%%%%%%%%%%%%%%%%%%%%%%%%%%%
\subsection{Poisson equation in spherical coordinates}
\label{subsec:spherical}

The Poisson equation appears, among others, in problems involving gravity, either Newtonian
or some approximations to General Relativity, and electrostatics. In numerical 
simulations, e.g., in Astrophysics and Cosmology, the computation of the gravitational potential 
is usually coupled to a hyperbolic set of equations describing the dynamics
of the fluid, e.g., the Euler equations. In those cases the Poisson equation is solved
on each time step (or every several time steps) of the evolution of the hyperbolic part.
It is thus crucial to test the efficiency of the SRJ compared with other methods 
currently used by the scientific community. In simulations of stellar interiors, spherical
coordinates is a popular choice of coordinates, so we adopt it for our test.
To mimic typical astrophysical scenarios we have chosen a test in which the 
source has compact support and boundary conditions are applied at radial infinity.

The Poisson equation in spherical coordinates $(r, \theta, \varphi)$ reads
\begin{equation}
  \frac{\partial^2 u}{\partial r^2}
  + \frac{2}{r} \frac{\partial u}{\partial r}
  + \frac{1}{r^2} \frac{\partial^2 u}{\partial \theta^2}
  + \frac{\cot{\theta}}{r^2} \frac{\partial u}{\partial \theta}
  + \frac{1}{r^2 \sin{\theta}} \frac{\partial^2 u}{\partial \varphi^2} = s,
  \label{eq:grav:poisson}
\end{equation}
being $u$ and $s$ functions of $(r,\theta,\varphi)$. For our test we choose
the source term to be the series
\begin{equation}
  s(r,\theta,\varphi) =
\begin{dcases} 
  - \sum_{n=0}^{n_{\rm max}} \sum_{m=-m_{\rm max}}^{m_{\rm max}} a_{2n}\, k_{2n}^2\, j_{2n} (k_{2n} r) \, Y_{2n}^{m,c}(\theta,\varphi), & {\rm for}\, r \le 1 \\
  \\ 0, & {\rm for}\, r>1, 
\end{dcases}
\label{eq:grav:source}
\end{equation}  
being $j_l$ the spherical Bessel functions of the first kind and $Y_l^{m,c}$ the real part of the spherical harmonics. Only even parity terms, $l=2n$
are considered. $k_l$ is the first
root of the spherical Bessel function of order $l$, such that $s (1,\theta,\varphi) = 0$. We chose $a_l = 1/2^{l}$,
such that the series is convergent. We impose homogeneous Neumann boundary conditions at $r=0$, $\theta=0$ and $\theta=\pi$, 
and periodic boundary conditions in the $\varphi$ direction. If we impose homogeneous Dirichlet condition at radial infinity  ($r \to \infty$), the
solution of this elliptic problem is
\begin{equation}
  u (r,\theta,\varphi) = 
  \begin{dcases}
    \sum_{n=0}^{n_{\rm max}} \sum_{m=-m_{\rm max}}^{m_{\rm max}} \left (a_{2n} \, j_{2n} (k_{2n} r) + b_{2n} r^{2n} \right ) \, Y_{2n}^{m, c}(\theta,\varphi), &{\rm for} \, r \le 1 \\
    \sum_{n=0}^{n_{\rm max}} \sum_{m=-m_{\rm max}}^{m_{\rm max}} \frac{c_{2n}}{r^{2n+1}} \, Y_{2n}^{m,c}(\theta,\varphi), & {\rm for} \, r > 1,
  \end{dcases}
  \label{eq:grav:sol}
\end{equation}  
where the coefficients $c_l$ and $b_l$ can be computed imposing continuity of u and its first derivatives at $r=1$, resulting in
\begin{equation}
b_l = c_l = - \frac{a_l}{2 l + 1} \partial_r j_l(k_l r)|_{r=1} = - \frac{a_l}{2 l + 1} \left [ l \,j_l(k_l) - k_l \,j_{l+1} (k_l) \right]
\end{equation}

Since our interest is to assess the performance of the SRJ method under the conditions which are found on real
applications, we solve this equation numerically in the domain $r\in[0,1]$, $\theta \in [0,\pi]$ and $\varphi \in [0,2
\pi]$, i.e. only in the region where the sources are non zero, and apply Dirichlet boundary conditions at $r=1$, using
the analytical solution given by Eq.~(\ref{eq:grav:sol}). 

We emphasize that this problem set up includes boundary
conditions of mixed type (Neumann and Dirichlet) and, hence, none of the schemes whose optimal parameters have been
tabulated in this paper is strictly optimal. However, as we shall see, even in such conditions, the new schemes
presented in this paper with $P=15$ will be competitive with other alternatives in the literature.

We set up three versions of the test with different dimensionality. In the 3D test, we choose $n_{\rm max} = \infty$ and $m_{\rm max}=2n$ and solve
the equation in the domain  $r\in[0,1]$, $\theta \in [0,\pi]$ and $\varphi \in [0,2 \pi]$, discretized in an equidistant grid of size $N\times N\times N$ points.
In the 2D case we consider axisymmetry, i.e., no $\varphi$ dependence in $u$ and $s$. We choose $n_{\rm max} = \infty$ and $m_{\rm max}=0$
and solve in the domain $r\in[0,1]$ and $\theta \in [0,\pi]$, discretized in a grid with $N\times N$ points. In the 1D case we consider spherical symmetry, i.e. no $\theta$
or $\varphi$ dependence in $u$ and $s$. We choose $n_{\rm max} =0$, $m_{\rm max} =0$  and solve in the domain $r\in[0,1]$ with $N$ points.
Since the series in Eqs.\,(\ref{eq:grav:source}) and (\ref{eq:grav:sol}) are convergent, we compute them numerically by adding terms until the last
significant digit does not change. We use a second order finite difference discretization for Eq.~(\ref{eq:grav:poisson}) and one ghost cell in
each direction to impose boundary conditions. For convenience, we multiply Eq.~(\ref{eq:grav:poisson}) by $r^2$ in the discretized version.

As an example, we present explicitly the discretization of the 1D problem. The 2D and 3D discretizations are analogous to what is described here.
We use a staggered grid with ghost cells, $r_i = (i - 1/2) \Delta r$ with $i=0,...,N+1$,  where
$\Delta r = 1 / N$. Points $i=0$ and $i=N+1$ are ghost cells used only for the purpose of imposing boundary conditions. Using second order centered derivatives 
and imposing spherical symmetry ($\partial_\theta=\partial_\varphi=0$) Eq.~(\ref{eq:grav:poisson}), multiplied by $r^2$, can be discretized as
\begin{equation}
r_i^2 \frac{u_{i+1}-2 u_i+u_{i-1}}{\Delta r^2} + 2 r_i\frac{u_{i+1} - u_{i-1}}{2 \Delta r} = r_i^2 s_i, \quad (i = 1, ..., N)
\end{equation}
where sub-index $i$ indicates a function evaluated at $r_i$. By imposing boundary conditions it is possible to set
the values of $u_0$ and $u_{N+1}$. The resulting linear system of $N$ equations with $N$ unknowns, $u_i, \, i=1,...,N$,
can be written in matrix form
\begin{equation}
\sum_{j=1}^{N}\mathcal{A}_{i j} u_{j} = r_i^2 s_i , \quad (i = 1, ..., N),
\end{equation}
being $\mathcal{A}_{i j}$ the elements of the coefficient matrix, which in the 1D case is a $N\times N$ tridiagonal matrix
\begin{eqnarray}
\mathcal{A}_{i i-1} &=& \left ( r_i - 2 \Delta r\right )\frac{r_i}{ \Delta r^2} \nonumber \\
\mathcal{A}_{i i} &=& \frac{-2 r_i^2}{ \Delta r^2} \nonumber \\
\mathcal{A}_{i i+1} &=& \left ( r_i + 2 \Delta r\right )\frac{r_i}{ \Delta r^2} \nonumber\\
\mathcal{A}_{i j} &=& 0, \quad \rm{otherwise}.
\end{eqnarray}
Note that this matrix is diagonally dominant by rows and columns except for the first two rows and the first column.
If the $r^2$ factor were not present, the matrix would not be diagonally dominant by columns and the convergence of the
iterative methods could not be guaranteed. Once the boundary conditions are applied, the coefficient matrix is effectively
modified. Wether the resulting effective matrix is diagonally dominant or not depends crucially on how the boundary
conditions are applied.

We impose Dirichlet boundary conditions at the outer boundary by setting $u_{N+1} = u_{\rm analytic}(r_{N+1})$,
being the analytic solution that given by Eq.~(\ref{eq:grav:sol}). In this case the equation at $i=N$ results
\begin{equation}
\mathcal{A}_{NN-1} u_{N-1} + \mathcal{A}_{NN} u_N = r_N^2 s_N - \mathcal{A}_{NN+1} u_{\rm analytic} (r_{N+1}),
\end{equation}
being  $\mathcal{A}_{NN+1}$ an extension of the coefficient matrix used for practical purposes.
At the inner boundary we impose homogeneous Neumann conditions, i.e. $\partial_r u |_{r_0} = 0$. A
second order discretization of this boundary condition is not unique. Two possible discretizations are
of the boundary condition are
\begin{eqnarray}
\frac{u_1 - u_0}{\Delta_r} &=& 0, \label{eq:bc1} \\
\frac{u_2 - u_0}{2 \Delta_r} &=& 0. 
\end{eqnarray}
Naively, one would expect that imposing either $u_0 = u_1$ or $u_0 = u_2$, would be an appropriate boundary 
condition. However, either of these cases results in an effective coefficient matrix which is not diagonally
dominant. To cure this pathology we impose $u_0=u_1=u_2$, which is compatible with both discretizations of the boundary
condition. 
Since we are fixing not only the value of the ghost cell, $u_0$, but also the value $u_1$, this condition
reduces the dimensionality of the linear system by $1$. The resulting effective coefficient matrix $\hat{\mathcal{A}}_{ij}$
is a tridiagonal matrix of size $(N-1)\times (N-1)$, with indices $i,j=2,...N$. The elements read
\begin{align}
\hat{\mathcal{A}}_{2 2} =& - \left ( r_2 + 2 \Delta r\right )\frac{r_2}{ \Delta r^2} = -\frac{21}{4}\nonumber \\
\hat{\mathcal{A}}_{2 3} =& \left ( r_2 + 2 \Delta r\right )\frac{r_2}{ \Delta r^2} = +\frac{21}{4}\nonumber \\
\hat{\mathcal{A}}_{i j} =& \mathcal{A}_{i j}, \quad {\rm otherwise}.
\end{align}
The new matrix is diagonally dominant by rows and columns, which guarantees the convergence of Jacobi-based iterative methods.

In practice, the effective coefficient matrix, $\hat{\mathcal{A}}_{i j}$, is not used in the iterative methods directly.
Instead we use a coefficient matrix $\mathcal{A}_{i j}$ extended to the ghost cells and  we set at each iteration the values at the ghost cells
($u_0$ and $u_{N+1}$) and at $u_1$ according to prescription given above. This procedure is equivalent to using the 
effective coefficient matrix, but it eases the implementation of the algorithm.

We perform series of calculations for different values of the number of points $N$. For each calculation we use the SRJ
coefficients computed in the previous sections matching the corresponding value of $N$.  For each series we perform
calculations using coefficients computed with different values of $P$. The tolerance goal is set to $10^{-5}/N^2$, which
depends on the number of points. Since we use a second order discretization, this scaling in the tolerance ensures that
the difference between the numerical and the analytical solution is dominated by finite differencing errors and at the
same time avoids unnecessary iterations in the low resolution calculations.  This prescription for the tolerance mimics
the tolerance choice that is used under realistic conditions and renders a fairer comparison in the computational cost
between different resolutions. For comparison we also perform calculations using other iterative methods: Jacobi,
Gauss-Seidel and SOR (weight equal to $1.9$). For each case involving iterative methods we perform two calculations: the
{\it ab initio calculation} in which the solution is initialized to zero, and the {\it realistic calculation} in which
the solution is initialized to $u_{\rm analytic} (1 + ran(-0.5,0.5)/N)$, being $ran(-0.5,0.5)$ a random number in the
interval $[-0.5,0.5]$. The {\it realistic calculation} tries to mimic the conditions encountered in many numerical
simulations in which an elliptic equation (or a system of PDEs) is solved coupled with evolutionary (hyperbolic) PDEs,\footnote{For instance, this is the case of the fluid equations.} which are typically solved using explicit methods, whose time
step is limited by the Courant-Friedrichs-Lewy (CFL) condition. This means that the change in the source of the Poisson
equation between subsequent calculations is ${\mathcal O}(\Delta x)$; therefore, if the solution of the previous time
step is used for the iteration in the elliptic solver, this should differ only ${\mathcal O}(\Delta x) \sim 1/N$, from
the solution.

In addition to iterative methods, we perform the calculations using a direct inversion method and spectral methods.  In
the {\it direct inversion method}, we compute the LU factorization of the matrix associated with the coefficients of the
discretized version of the equation by performing Gaussian elimination using partial pivoting with row interchanges. We
use the implementation in the {\tt dgbtrf} routine of the {\tt LAPACK} library \citep{lapack}, which allows for
efficient storage of the matrix coefficient in bands. Once the LU decomposition is known, we solve the system of linear
equations using the {\tt dgbtrs} routine. Since this method is non-iterative, its computational cost does not depend on
the initial value. However, this approach has advantages when used repeatedly, coupled to evolution equations (e.g., for
a fluid). Most of the computational cost of this method is due to the LU decomposition, but once it has been performed,
solving the linear system for different values of the sources is computationally less intensive. Therefore, we consider
the computational cost of the whole process, LU decomposition and solution of the system, in the {\it ab initio
  calculations}, while only the solution of the linear system, assuming the LU decomposition is given, in the {\it
  realistic calculations}.
\begin{figure*}[p!]
\centering
\includegraphics[width= 0.49\textwidth]{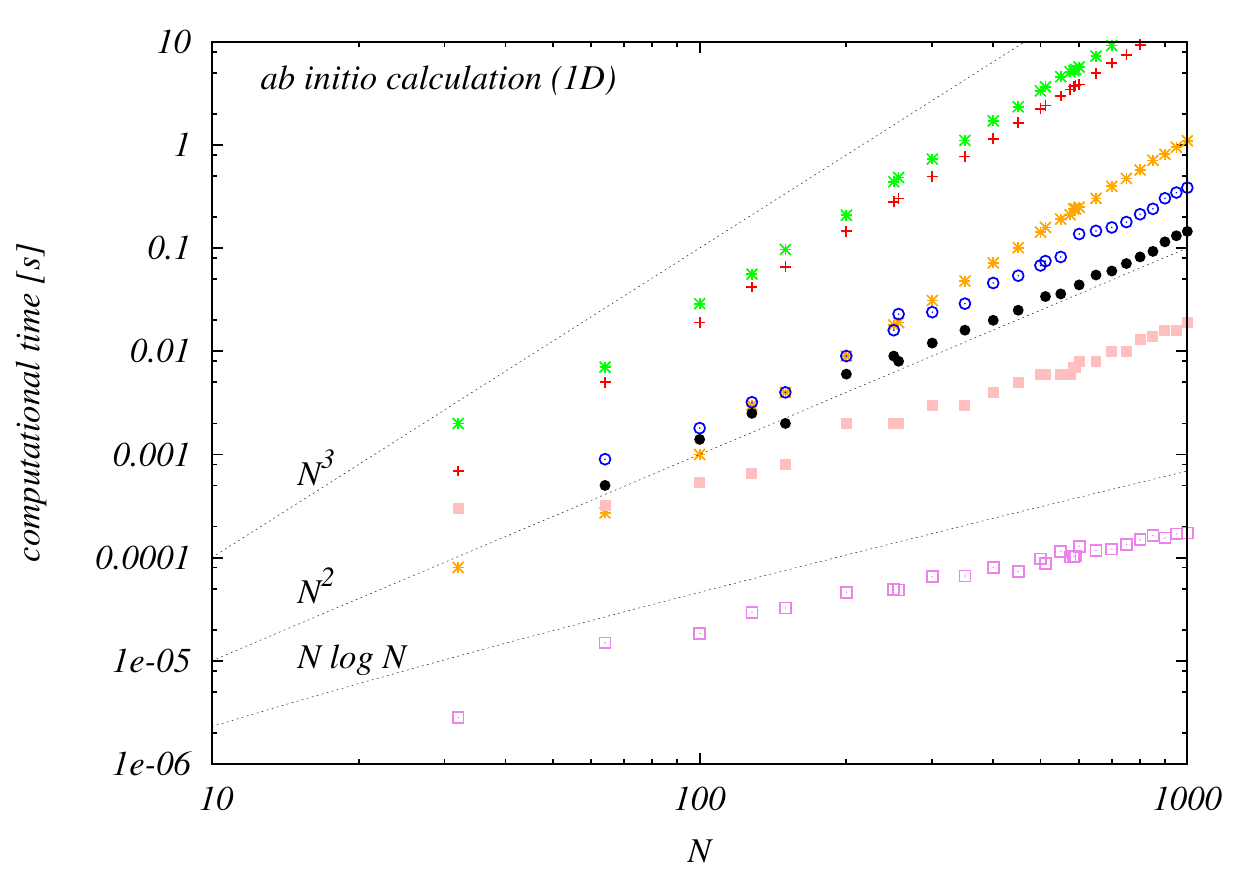} 
\includegraphics[width= 0.49\textwidth]{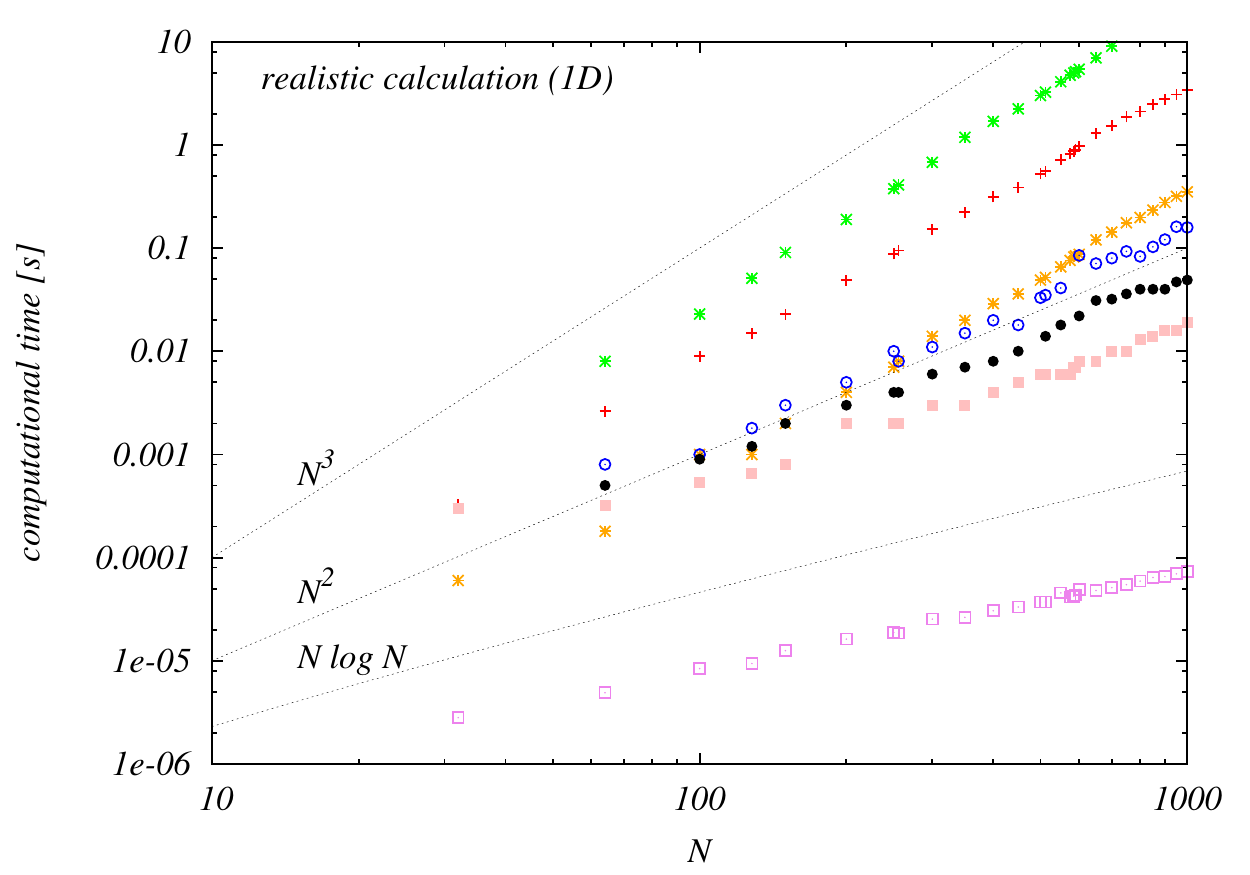} \\
\includegraphics[width= 0.49\textwidth]{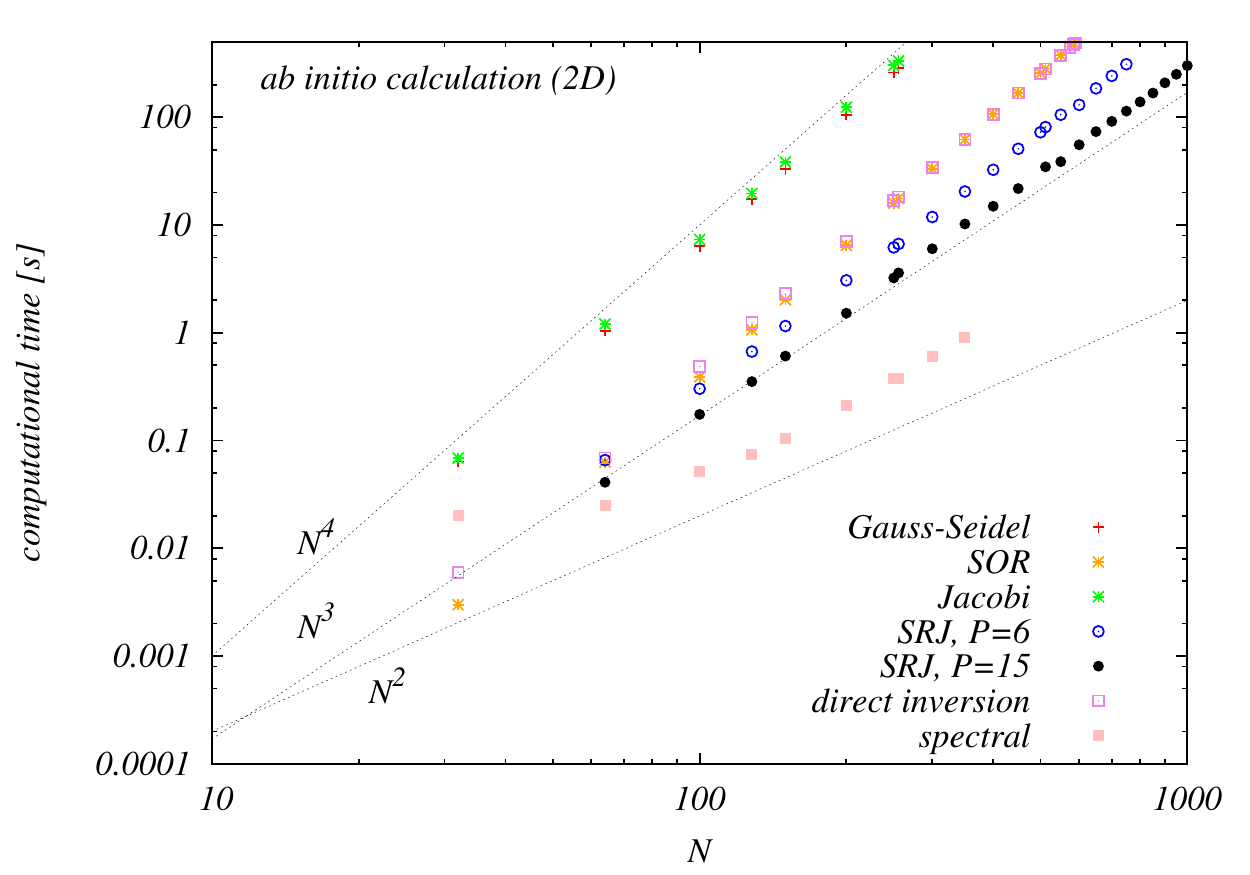} 
\includegraphics[width= 0.49\textwidth]{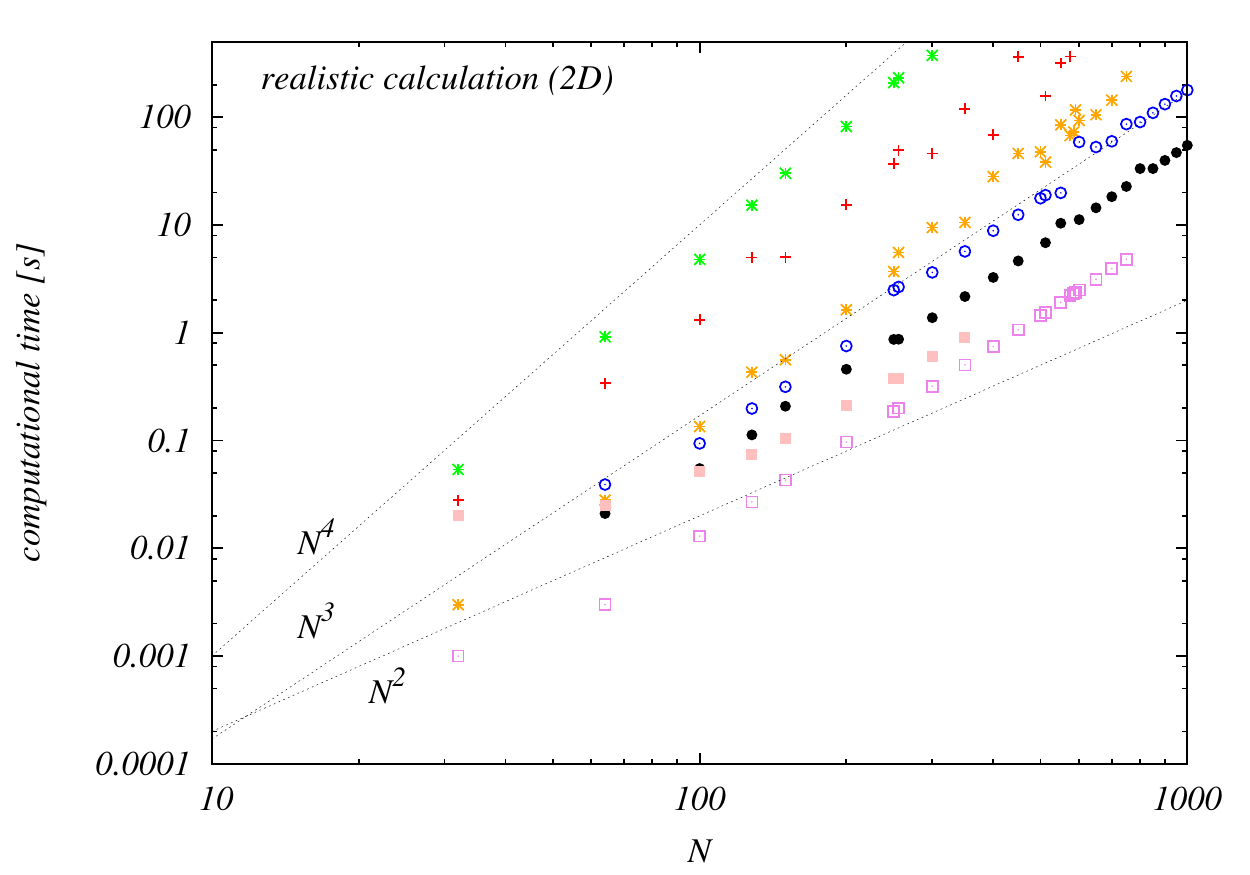} \\
\includegraphics[width= 0.49\textwidth]{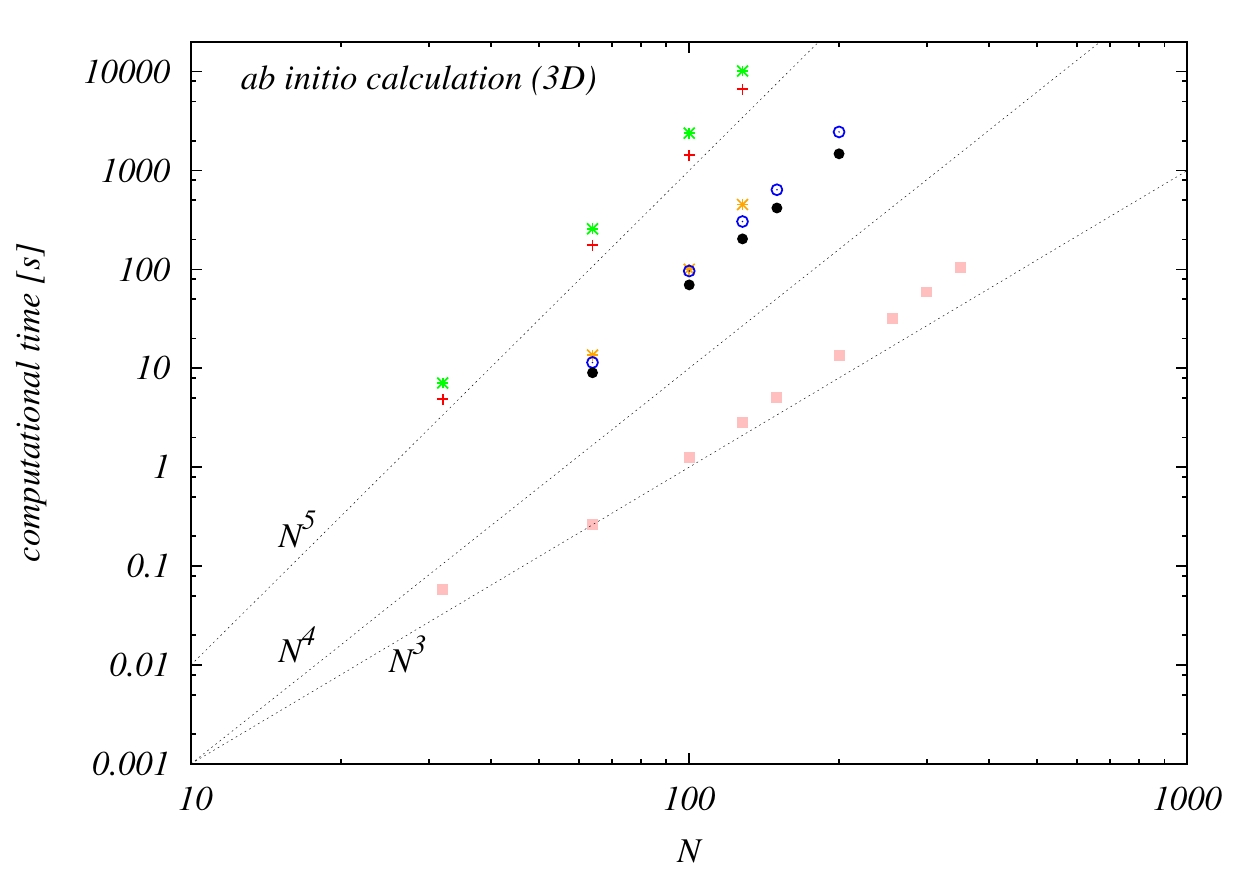} 
\includegraphics[width= 0.49\textwidth]{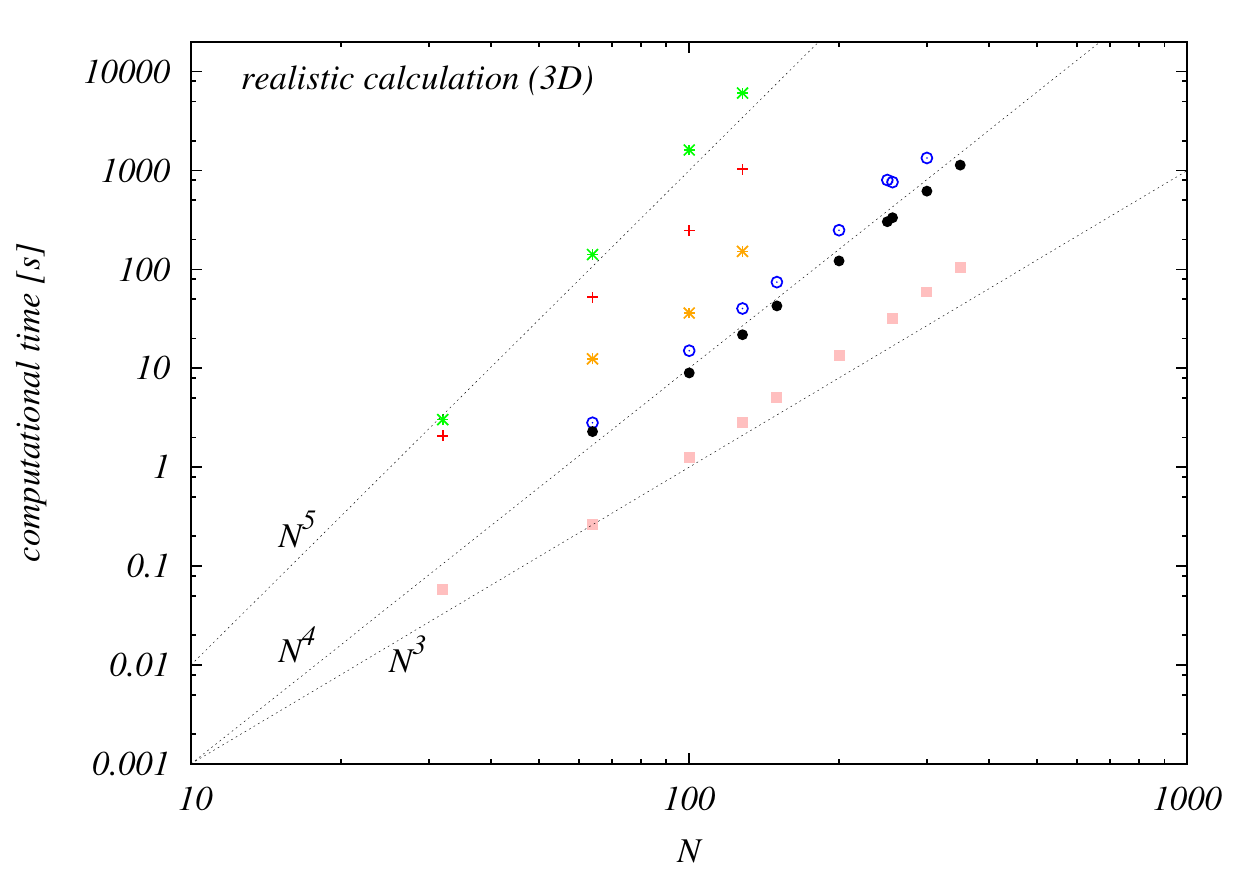} \\
\caption{Computational cost of the solution of the Poisson equation in spherical coordinates, depending on the size of the problem $N$,
using different numerical methods, including SRJ for the minimum ($P=6$) and maximum ($P=15$) set of coefficients computed in this work. 
Upper, middle and lower panels show the 1D, 2D and 3D test respectively. 
Right and left column panels show {\it ab initio} and {\it realistic
  calculations} respectively. 
}
\label{fig:grav:cpu}
\end{figure*}

For the spectral solver we use the {\tt LORENE} library \cite{lorene}. To provide results that are comparable to all other numerical methods used
in the present work we use the following procedure: first we evaluate the source, $s$, at the {\it finite differences grid} used in all other 
numerical methods; then, the source is interpolated to the collocation points in the {\it spectral grid}, which do not coincide with the 
{\it finite differences grid}; next the solution is computed by means of the {\tt LORENE} library; finally the function is evaluated at 
the cells of the {\it finite differences grid}. The details of the procedure are described in
\cite{Dimmelmeier2005}. The accuracy of the numerical 
method is dominated by the second order finite differences discretization error associated with the {\it finite differences grid}, provided 
sufficient number of collocation points are used in the {\it spectral grid}. We have tested that it is sufficient to use $N/2$ collocation points 
per dimension to fulfill this requirement. When using the spectral solver, there is no difference in the computational cost in {\it ab initio} or 
{\it realistic calculations}.

We have performed the calculations using a 3.4 GHz Intel core i7 and 16 GB of memory. All codes and libraries have been compiled using the {\tt GCC}
compiler \citep{gcc}. We have measured the computational time for each method timing exclusively the part of the code involved in the computation and not
the allocation and initialization of variables. Figure~\ref{fig:grav:cpu} shows the dependence of the computational time for 1D, 2D and 3D tests
in the {\it ab initio calculations} and the {\it realistic calculations} setups. 
As expected, for any dimensionality,  the SRJ methods render a significant speed up with respect to other iterative methods, due to the smaller 
number of iterations needed. Only SOR method has comparable computational time for low resolutions ($N < 100$). The computational time for SRJ
methods scales approximately as $N^{d+1}$, being $d$ the dimensionality of the test, i.e. the number of iterations is proportional to $N$. In 
comparison, the computational cost of other iterative methods (Jacobi, Gauss-Seidel, SOR), scale approximately as $N^{d+2}$, i.e. the number
of iterations needed scales as $N^2$. This factor $N$ improvement of SRJ with respect other iterative 
methods ensures that the method will always be less costly for sufficient high resolution. Compared to non-iterative methods the results 
depend on the dimensionality of the test. 

For the 1D test, both spectral and the direct inversion method are significantly faster than SRJ.
The computational cost of both methods are close to $N \log N$. Therefore, we conclude that SRJ methods are not competitive for 1D problems, even 
when {\it realistic} conditions are considered. 

In 2D, the computational cost of the direct inversion method for {\it ab initio calculations} increases significantly,
scaling as $N^4$, because the associated matrix is not tridiagonal anymore, as in the 1D case, but is a banded matrix of
band size $2N+1$. Hence, the direct inversion method is more costly than SRJ for resolutions $N>100$. However, in the
{\it realistic calculation}, in which the LU decomposition is not performed, the direct inversion method is still the
fastest, with a computational time scaling as $N^3$ (the same as SRJ) but with lower computational cost. Due to
limitations of the {\tt LORENE} library, we were not able to perform multidimensional computations using spectral
methods for $N>350$.  Compared to $P=15$, spectral methods are about a factor $2$ faster than SRJ in {\it realistic
  calculations} and become comparable for $N<100$. It seems fair to say that spectral methods perform better for {\it ab
  initio calculations}, since in this case, SRJ methods (see $P=6$ and $P=15$ in Figure~\ref{fig:grav:cpu}) scale as
$N^3$. Therefore, we conclude that for 2D calculations SRJ is a competitive method, when compared with spectral
methods. Although the direct inversion method is the fastest in the range of values of $N$ selected for our tests, we
expect that this advantage will disappear when going to larger number of points; the memory needed for the direct
inversion method scales as $N^3$ (due to the explicit use of the banded structure of the matrix) in comparison with
$N^2$ of all other methods (iterative and spectral). This strongly limits the size of the problem to be solved without
using parallelization.

In 3D all computations are significantly more costly, so we limit our
tests to what is achievable within $\sim 1$~hour of computation
time. For the SRJ methods tested this is $N \le 200$.  For $N>100$ the
computational cost of spectral methods is a factor $\approx 10$ lower
than a SRJ method with $P=15$, in the {\it realistic calculation}. Using the SRJ parameters for $P=15$ and an
  effective number of points per dimension as given by Eq.\,(\ref{eq:effectiveN}), the SRJ method becomes $\sim 20\%$
  faster, so that it is ``only'' $\approx 8$ slower than the spectral method. The
conclusion is that spectral methods still seem to have advantages over
SRJ methods, for the 3D test presented. However, both spectral and SRJ
methods scale approximately as $N^4$ in 3D. Due to the large amount of
memory needed for the direct inversion method, which scales as $N^5$,
we did not present any such calculation for the 3D case. In practice,
this limitation makes the direct inversion method unfeasible for
computations in a single CPU. The performance of all these methods and
a comparison between them in a parallel architecture is beyond the
scope of this work.

\begin{figure*}[p!]
\raggedleft
\includegraphics[width= 0.49\textwidth]{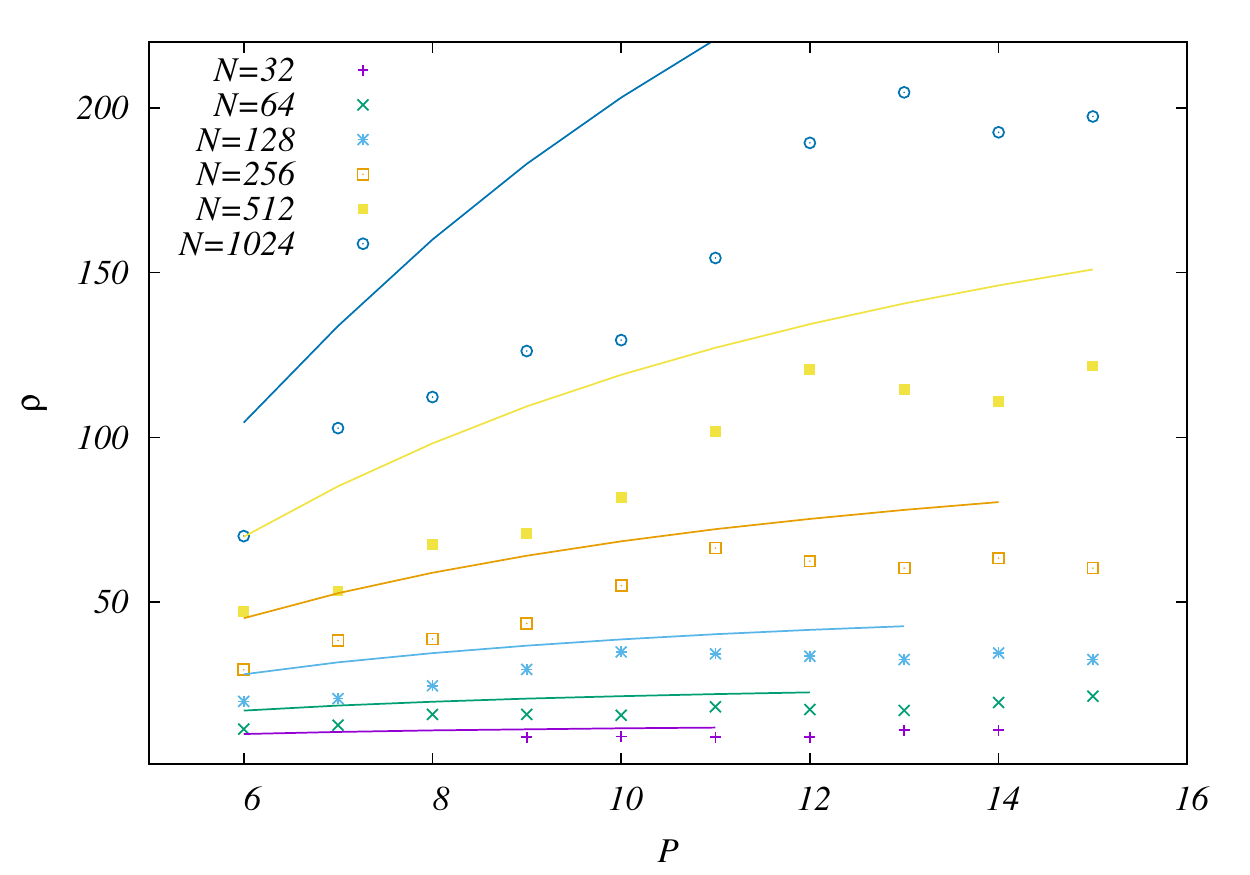}
\includegraphics[width= 0.49\textwidth]{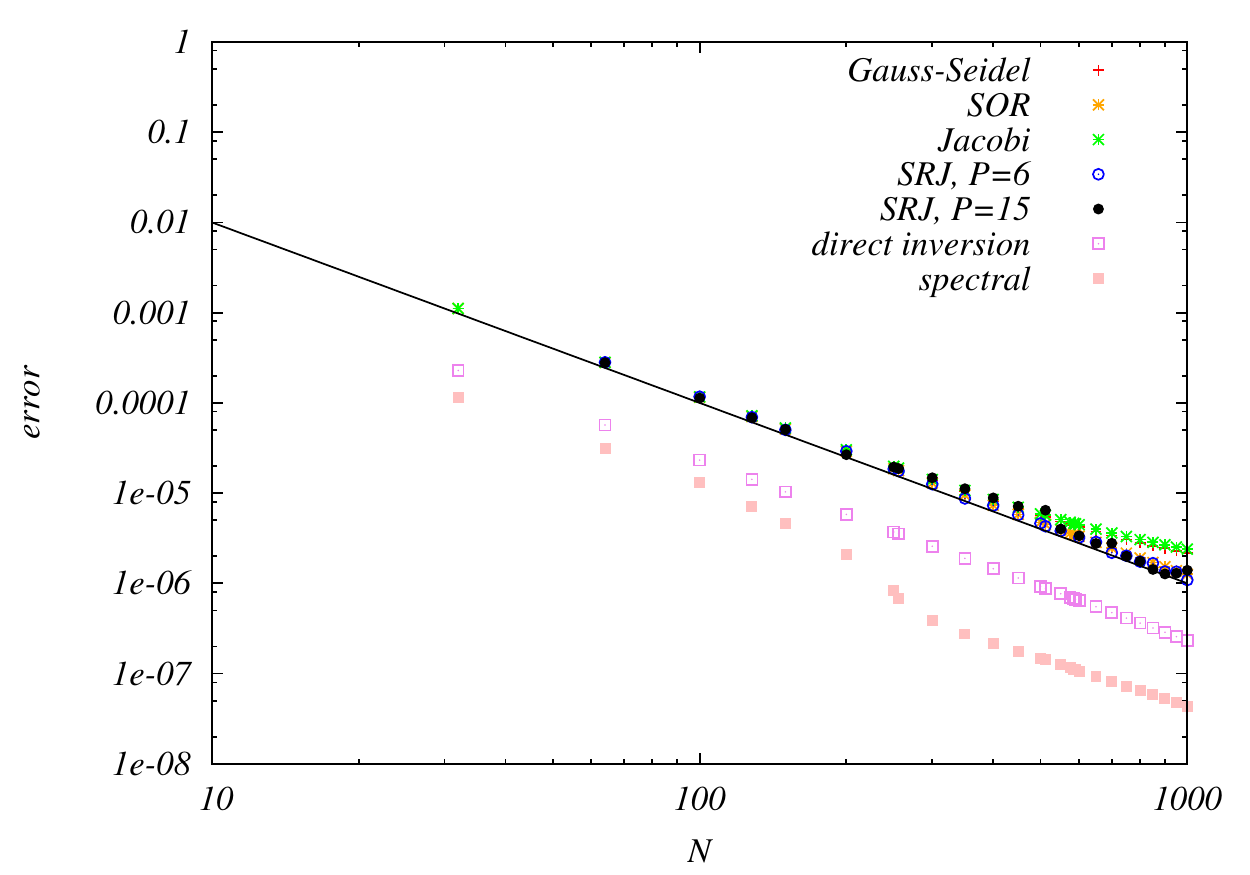}\\
\includegraphics[width= 0.47\textwidth]{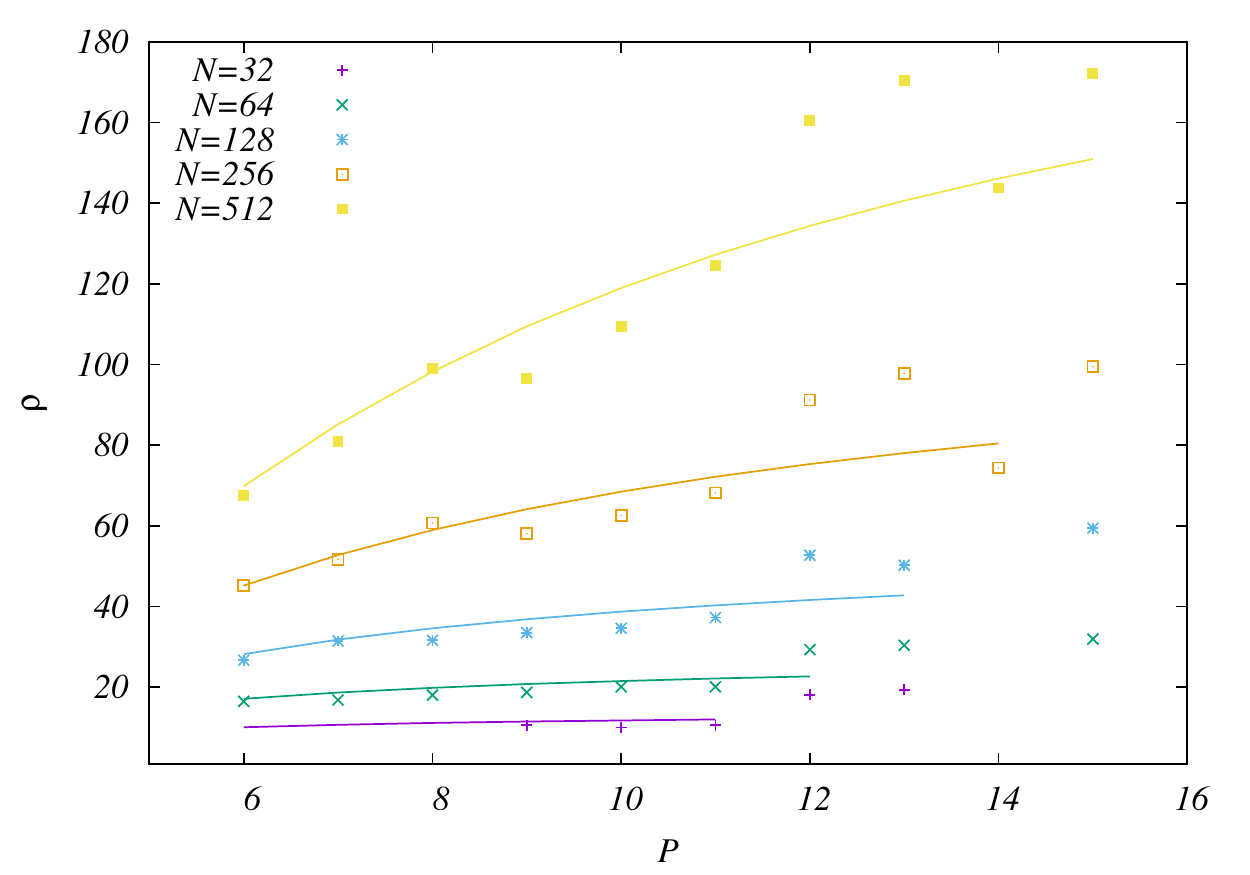}
\includegraphics[width= 0.49\textwidth]{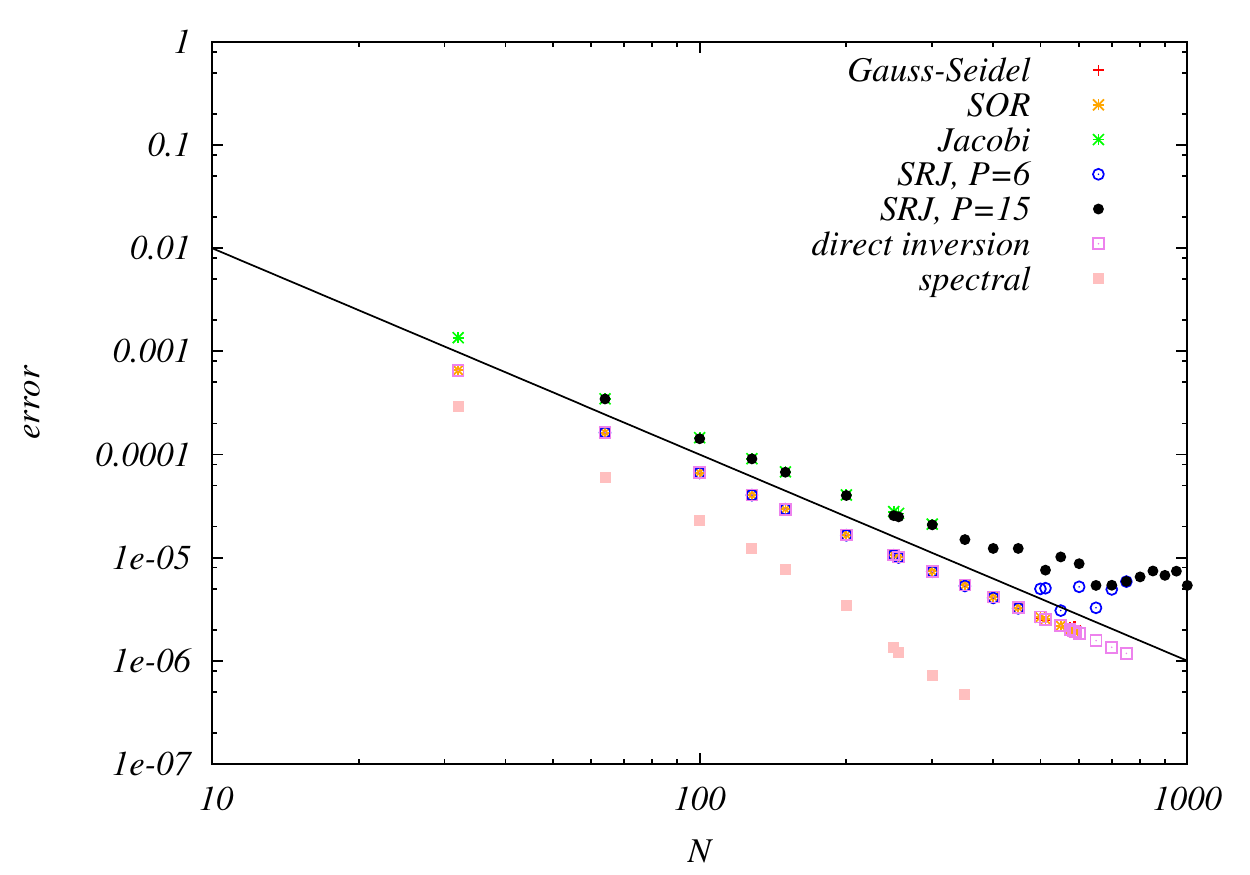}\\
\includegraphics[width= 0.47\textwidth]{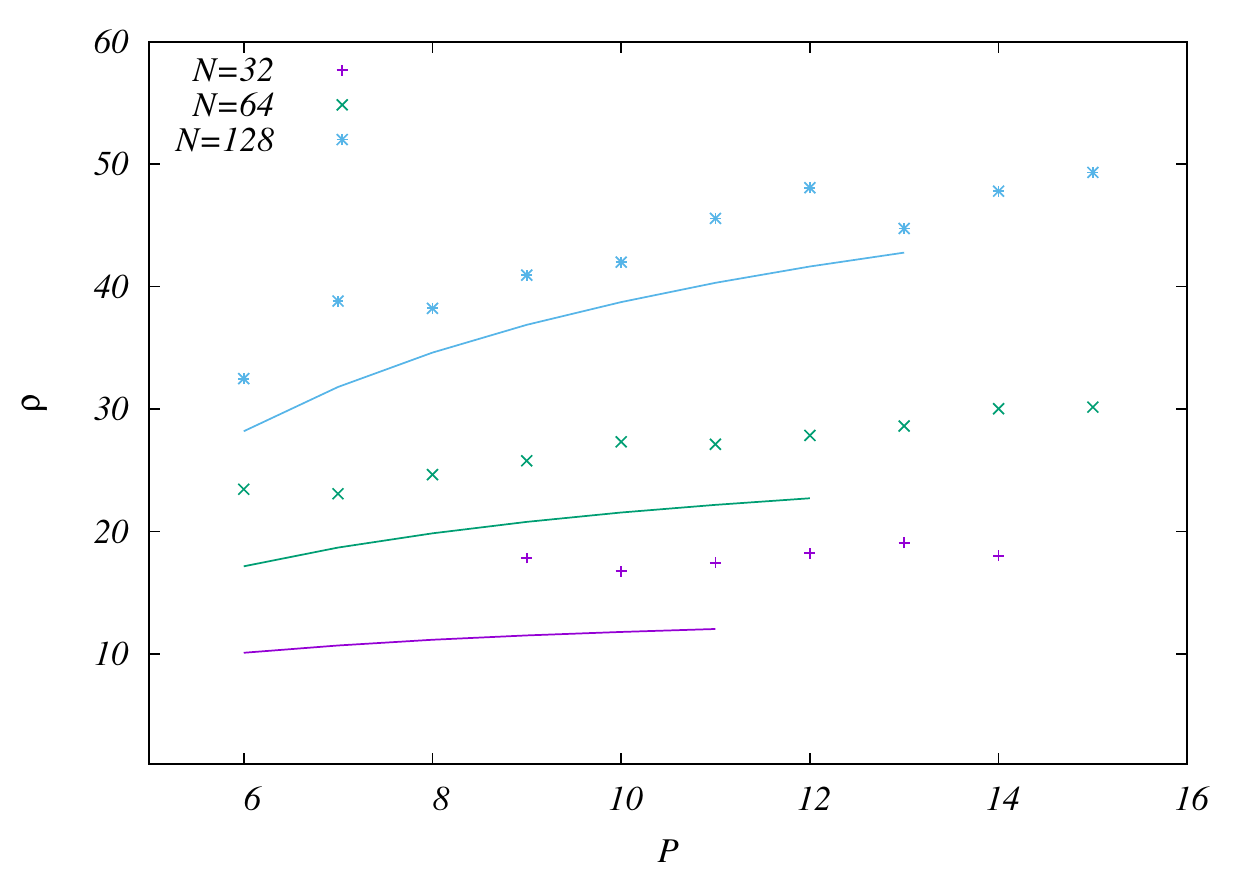}
\includegraphics[width= 0.49\textwidth]{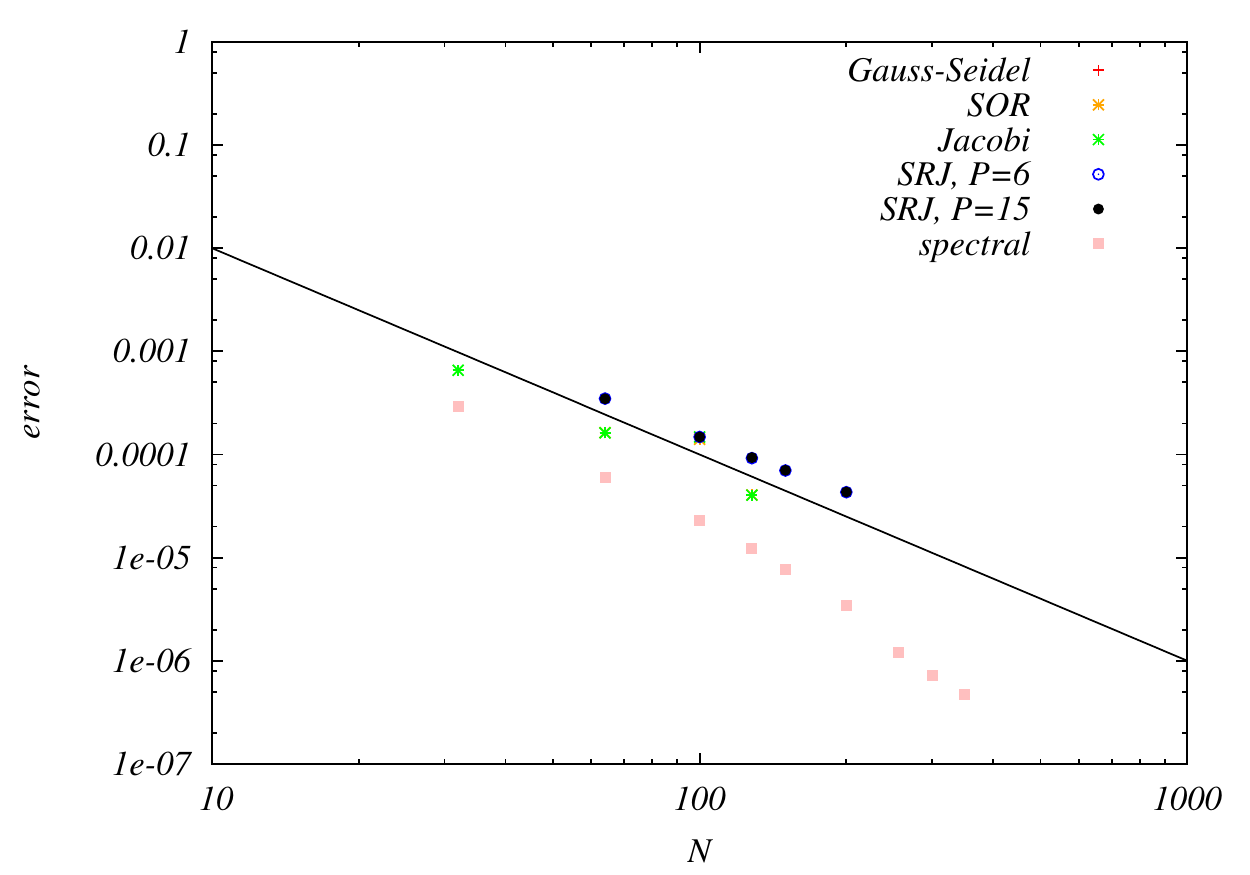}
\caption{
Detailed analysis of the solution of the Poisson equation for 1D, 2D and 3D calculations (upper, middle and lower panels respectively).
Left panels show the dependence of the numerically estimated value of $\rho$ on $P$, for several values of $N$ ranging from $32$ to $1024$.
Solid lines of the same color represent the theoretical estimate of $\rho$, for each case.
Right panels show the error in the solution as a function of $N$, computed as the $L_\infty$-norm of the difference between the numerical and 
the analytical solution. The solid black line represents $1/N^2$, which is an estimation of the expected finite difference error.
}
\label{fig:grav:rho}
\end{figure*}

Finally we have estimated numerically the value of $\rho$ for different SRJ weights, to be compared with the theoretical
predictions.  For this purpose we compute the ratio of number of iterations needed with Jacobi and a given SRJ method,
using the same tolerance and resolution, $N$. Left panels of Fig.\,\ref{fig:grav:rho} show the dependence of $\rho$ on
$P$ for several values of $N$, computed using the set of {\it ab initio calculations}. Regardless of the dimensionality,
in all calculations the numerical values of $\rho$ are close to the theoretical predictions (solid lines). In the 1D
test problems there is a tendency of the theoretical values to overestimate the numerically computed value. This trend
is exacerbated for large values of $N$ (namely, $N>512$). In 3D the situation is reversed, and the theoretical value of
$\rho$ falls below the numerical one. To explain these differences, we shall consider that the optimal weights depend on
the dimensionality of the problem, since $\kappa_m$ does also depend on dimensionality (see
Eq.\,\ref{eq:boundaries1D3D}).  As the optimization of the weights has been performed for the 2D case, it is not
surprising to find such discrepancies when using the same weights and the same value of $\rho$ in a problem with
different dimensionality. Indeed, we have repeated some of the 3D and 1D test problems employing the optimal SRJ
parameters corresponding to the effective number of points set according to Eq.\,(\ref{eq:effectiveN}), and found that
(i) the SRJ scheme runs is $\sim 20\%$ less iterations and, (ii) for this effective number of points, the theoretical
convergence performance index, computed with the dimensionality corrections mentioned below Eq.\,(\ref{eq:rho}), becomes
an upper bound for the numerical values of $\rho$. Adding to this arguments, we also note that the discretization of the
Laplacian operator in spherical coordinates may also change slightly the optimal weights. Finally, another factor that
explains the discrepancies is that the boundary conditions of this problem are mixed (as commented above), and the
optimal weights are computed for purely Neumann boundaries.

The right panels of Fig.\,\ref{fig:grav:rho} show the error in the solution as a function of $N$, computed as 
the $L_\infty$-norm of the difference between the numerical and the analytical solution. In all cases the error is 
dominated by the finite difference error associated to the discretization of the elliptic operator, which, 
for a second order method, is expected to be $\sim\mathcal{O}((\Delta x)^2) \sim \mathcal{O}(1/N^2)$. This is a 
symptom that our prescription for the tolerance is yielding converged numerical solutions, in iterative methods.
It also shows that the number of spectral grid points used is sufficient for such calculations.

We have also tried different discretizations of the equation and the boundary conditions, although not as systematically
as the presented case. In general, using discretizations which lead to non-diagonally dominant coefficient matrices,
increases the number of iterations to converge or, in some cases, they do not converge at all. The Jacobi method is the
most sensitive to this, while all other iterative methods (Gauss-Seidel, SOR, SRJ) seem less affected by this issue. As
an example, if just $u_0=u_1$ is used for the inner boundary condition (consistent with Eq.\,(\ref{eq:bc1})), the
Jacobi method needs about $5$ times more iterations in 1D, while all other iterative methods remain almost unaltered
(only SOR shows differences for $N\le64$).  This is an indication that the new method is not only faster than well-known
iterative methods but can also be more robust than some of them.

%%%%%%%%%%%%%%%%%%%%%%%%%%%%%%%%%%%%%%%%%%%%%%%%%%%%%%%%%%%%%%%%%%%%%
\subsection{Grad-Shafranov equation in spherical coordinates}

The Grad-Shafranov (GS) equation \citep{Grad:1970,Shafranov:1958} describes equilibrium solutions in ideal
magnetohydrodynamics for a two dimensional plasma. It is of interest
in studying the plasma in magnetic confinement fusion (e.g. Tokamaks),
the solar corona and neutron star magnetospheres, among others.

In spherical coordinates $(r,\theta,\varphi)$ the magnetic field of an axisymmetric
($\partial_\varphi=0$) plasma configuration can be expressed as
\begin{equation}
  \boldsymbol{B} (r,\theta) = \boldsymbol{\nabla} \times \boldsymbol{A}
= \frac{1}{r \sin{\theta}} \boldsymbol{\nabla} \Psi (r,\theta) \times \boldsymbol{\hat e_{\varphi}} 
  + \frac{F(r,\theta)}{r \sin{\theta}} \boldsymbol{\hat e_{\varphi}},
\end{equation}
where $\boldsymbol{A}$ is the vector potential and $\boldsymbol{\hat e_{\varphi}}$
is the unit vector in the $\varphi$ direction. The flux function,
$\Psi \equiv r \sin{\theta} A_\varphi$, is constant along magnetic field
lines and is a measure of the poloidal magnetic field strength. The
toroidal function, $F \equiv B_{\varphi} r \sin{\theta}$, is a measure
of the toroidal field strength. Using Ampere's law,
$\boldsymbol{J} = \boldsymbol{\nabla}\times \boldsymbol{B}$, being $\boldsymbol{J}$
the electric current, the flux function can be linked to the toroidal current as
\begin{equation}
\Delta^* \Psi \equiv \partial_{rr} \Psi  + \frac{1}{r^2} \partial_{\theta\theta}\Psi
- \frac{\cot{\theta}}{r^2} \partial_\theta \Psi = - J_\varphi r \sin{\theta},
\label{eq:ampere_phi}
\end{equation}
where $\Delta^*$ is the GS elliptic operator. For simplicity we consider here
the case in which the inertia of the fluid can be neglected (magnetically dominated).
In this case, if we impose force balance, $\boldsymbol{J} \times \boldsymbol{B} = 0$,
the toroidal function depends on the flux function, $F(\Psi)$. As a result Eq.\,(\ref{eq:ampere_phi})
leads to the GS equation
\begin{equation}
  \Delta^* \Psi = - F(\Psi) F'(\Psi). 
  \label{eq:GS}
\end{equation}
Not neglecting the inertia of the fluid leads to additional pressure terms, which are not
considered here. A popular choice for the toroidal function is $F(\Psi) = C \Psi $, being $C$ a constant.
In this case the GS equation results in
\begin{equation}
  \Delta^* \Psi + C^2 \Psi = 0,
  \label{eq:GS_linear}
\end{equation}
which is a suitable elliptic problem to be solved with SRJ methods.
Equation\,(\ref{eq:GS_linear}) resembles the Helmholtz differential equation in that it
contains a Laplacian-like operator and a linear term in $\Psi$. Therefore,
this test will show the ability of SRJ methods to handle more complicated
elliptic operators. In addition we use this test to demonstrate the
ability of iterative methods to handle boundary conditions imposed
at arbitrarily shaped boundaries.

We compute the solution of Eq.\,(\ref{eq:GS_linear}) for two sets
of boundary conditions, in the numerical domain
$r \in [1,10]$ and $\theta \in [0,\pi]$. In all cases we impose homogeneous
Dirichlet conditions at $\theta=0$ and $\theta=\pi$.
In {\it test A} we impose Dirichlet boundary conditions at $r=1$ and $r=10$
with $\Psi=\sin^2{\theta}/r$. In the case $C=0$, the solution for this test is
a dipolar field. As the value of $C$ is increased the solution results in a
twisted dipole.

In {\it test B} we solve the GS equation in part of the domain, the region defined by
\begin{eqnarray}
  &r < \left (4.5 \sin^2{\theta} + 2.5 \sin^2(2\theta)\right)
  \left ( 1 - 0.4 \cos(3\theta) + 0.3 \cos(5\theta) + 0.05 \sin(25 \theta) \right ) & \nonumber \\
  &\&& \nonumber\\
  &  (r \sin{\theta}-4)^2+(r\cos{\theta}-1.6)^2<1, &
  \label{eq:shape_b}
\end{eqnarray}
inside the aforementioned numerical domain.
The boundary of this region intersects the sphere $r=1$ at $\theta_1=0.3037$ and $\theta_2=2.8903$.
At $r=1$ we impose Dirichlet boundary conditions with $\Psi =\sin((\theta-\theta_1)/(\theta_2-\theta_1)\pi)^2$,
and homogeneous Dirichlet conditions at the remaining boundaries.
Imposing boundary conditions in arbitrarily shaped boundaries is straightforward
when using iterative methods such as SRJ; we set $\Psi=0$ everywhere
outside the region~(\ref{eq:shape_b}) and apply the SRJ iteration only inside
this region using a mask.

\begin{table}
  \caption{Number of iterations and computational time used by the SRJ method with $N=300$ and $P=14$
  to solve the GS equation, depending on the value of $C$. }
\begin{center}
\begin{tabular}{c c c | c c c}
\hline
{\it test A} &&& {\it test B} &&\\
$C$ & iterations & computational & $C$ & iterations & computational \\
 & & time [s] &  &  & time [s]  \\
\hline
 0 & 5350  & 8.55  & 0 & 3450  & 1.78  \\
 0.01 & 5350 & 8.55 & 0.01 & 3450 & 1.78  \\
 0.1 & 5350 & 8.58    & 0.1 & 3450 & 1.78  \\
 0.2 & 4660 & 7.48   & 0.5 & 3600 & 1.86  \\ 
 0.3 & 7750 & 12.43  & 1.0 & 3550 & 1.84  \\  
 0.31 & 9830 & 15.73    & 1.3 & 3410 & 1.78  \\
 0.32 & 13540 & 21.80    & 1.4 & 3660 & 2.41  \\
 0.33 & 21390 & 34.38    & 1.45 & 4980 & 2.57  \\
 0.34 & 49080 & 79.22    & 1.47 & 11620 & 6.03 \\
\hline
\end{tabular}
\end{center}
\label{tab:testab}
\end{table}

In both tests we use a second order discretization of the GS equation and a 
numerical resolution of $300\times 300$ equispaced grid points
covering the numerical domain. We solve the equations using the SRJ method
with weights corresponding to $N=300$ and $P=14$. In both tests we initialize 
$\Psi$ to zero in the whole domain.
Table~\ref{tab:testab} shows the number of iterations and computational time
to obtain a numerical solution with residual bellow $10^{-12}$,
depending on the value of $C$ used.

\begin{figure*}[p!]
\vspace{-2cm}
\includegraphics[width= 0.99\textwidth]{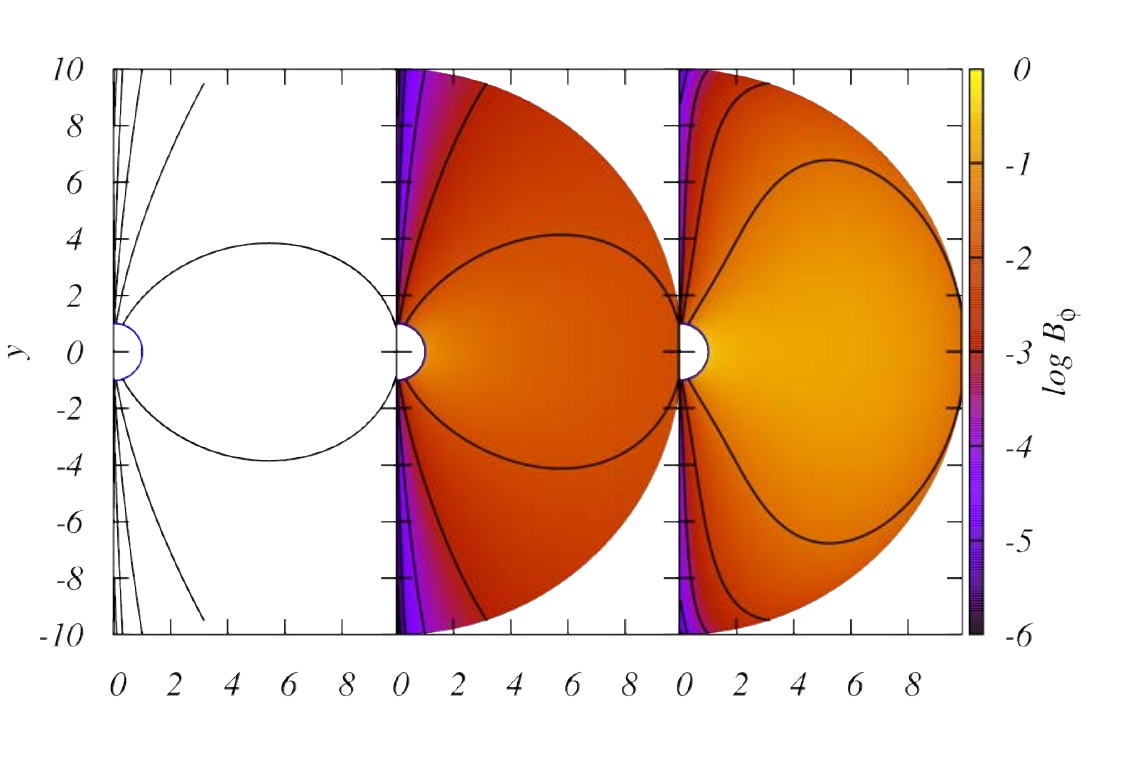}\\
\vspace{-1.5cm}\\
\includegraphics[width= 0.99\textwidth]{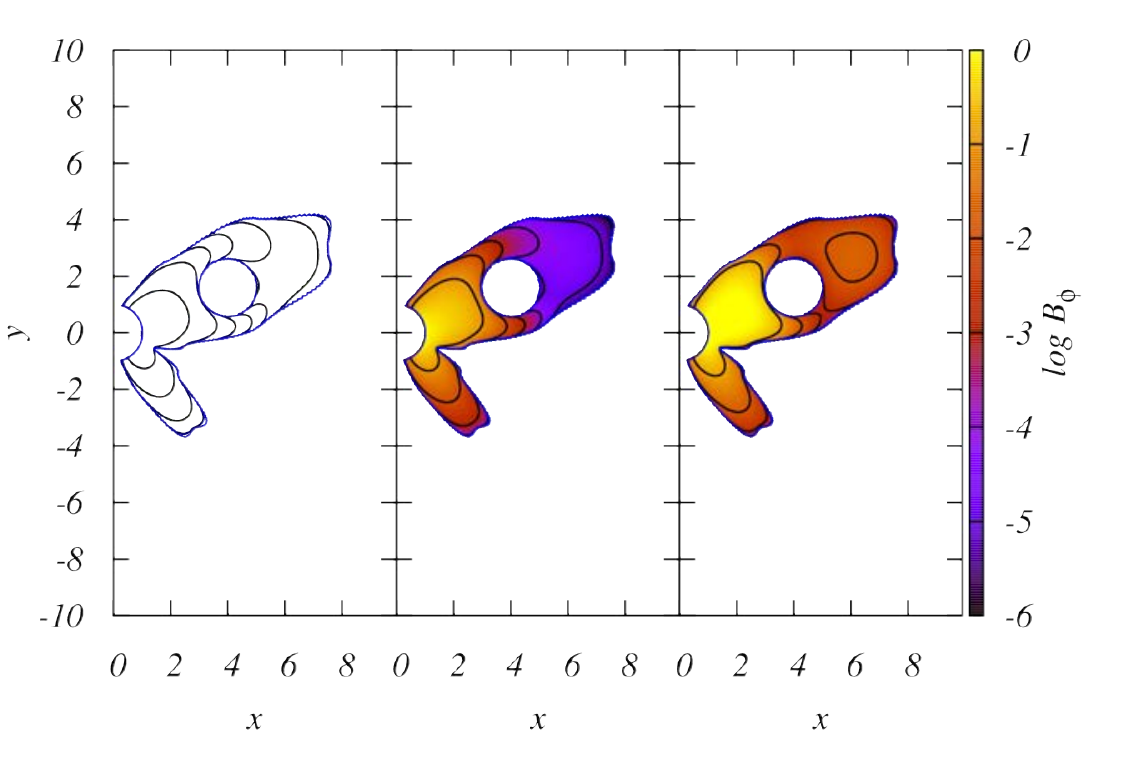}
\caption{Numerical solution of the GS equation for {\it test A} (upper panels)
  and {\it test B} (lower panels) for different values of the constant $C$.
  From left to right $C=0$, $0.1$ and $0.34$ (upper panels) and  
  $C=0$, $1.0$ and $1.47$ (lower panels).
  Isocontours of $\log \Psi$ (solid black lines),
  which coincide with magnetic field lines, are plotted in increments of $1$. Colors
  show $\log B_\varphi$. For convenience we plot the $(x,y)$ plane,
  being $x\equiv r \sin{\theta}$ and  $y\equiv r \cos{\theta}$. Blue line in lower panels
  show the boundary of the region in which the GS equation is solved.
}
\label{fig:grav:rho2}
\end{figure*}

Upper panels of Fig.\,(\ref{fig:grav:rho2}) show the results for {\it test A}, for three
different values of $C$. For the case $C=0$, the analytical solution is $\Psi=\sin^2{\theta}/r$.
In this case the maximum difference between the analytical and the numerical result, in absolute value, 
is $8.5 \times 10^{-5}$, which is consistent with the second order discretization ($9/N^2= 10^{-4}$). 
For $C=0.1$ a toroidal component appears, but the flux function, $\Psi$, remains essentially the same.
For higher values of $C$ there is a tendency of the magnetic field lines to become more inflated, to support
the increased magnetic tension due to the high magnetic field. In this regime the number of iterations 
needed in the SRJ method increases. We were not able to obtain solutions for values larger than $C=0.34$.
This is not a problem of the numerical method itself, since other methods (Jacobi, Gauss-Seidel, SOR) show similar 
behavior. The value $C\approx 0.35$ corresponds to an eigenvalue of the GS operator. For this case the matrix 
associated to the discretization of the GS equation is singular and hence it cannot be inverted. This is causing 
the convergence problems near this point.

Lower panels of Fig.\,(\ref{fig:grav:rho2}) show the results for {\it test B}, for three
different values of $C$. This case behaves qualitatively similar to {\it test A} but with more 
complicated geometry. The case $C=0$ shows no toroidal field, which appears as $C$ is increased.
For $C=1.0$ the flux function is still similar to that of the untwisted case, albeit slightly deformed.
For $C=1.47$, the maximum value that we were able to achieve, the topology of the field has changed, showing
a region of close magnetic field lines in the upper right part of the domain. As in {\it test A}, the difficulty to 
achieve convergence for larger values of $C$ is related to the presence of an eigenvalue of the GS operator. 
Note that the solution is everywhere smooth, and magnetic field lines (black lines) are tangent to the domain boundary
(blue curve) as expected (except for $r=1$ where non-zero Dirichlet boundary conditions are applied).

In general the SRJ method shows reasonable rates of convergence and computational time to solve the problem with 
high accuracy, despite of the complicated boundary conditions. This renders a method which can be used in real applications
of the GS equation with a good trade of excellent performance and ease of implementation.

%%%%%%%%%%%%%%%%%%%%%%%%%%%%%%%%%%%%%%%%%%%%%%%%%%%%%%%%%%%%%%%%%%%%%
\section{Conclusions and future work}
\label{sec:conclusions}
%%%%%%%%%%%%%%%%%%%%%%%%%%%%%%%%%%%%%%%%%%%%%%%%%%%%%%%%%%%%%%%%%%%%%

Building upon the results of YM14, we have devised a new method for
obtaining the optimal parameters for SRJ schemes applied to the
numerical solution of ePDEs.

We have shown that the new multilevel SRJ schemes keep improving the
convergence performance index of the scheme, which means that
increasing the value of $P$ we obtain ever larger acceleration factors
with respect to the Jacobi method. In the present paper we report
acceleration factors of a few hundreds and, in some cases, more than 1000 with respect to the Jacobi
method if a sufficiently large number of points per dimension
(namely, $N>16000$) and number of levels are 
considered. 

Mainly due to the fact that we have derived analytic solutions for
part of the unknowns, our new method reduces the stiffness of the
non-linear system of equations from which optimal parameters are
computed, allowing us to obtain new SRJ methods for up to $P=15$ and
arbitrarily large number of points per dimension $N$. 

From this number of levels, new problems arise, which hinder the computation of optimal coefficients at relatively low
number of discretization points. These problems are related to the fact that for large values of $P$ the solution to the
problem are very sensitive to tiny changes in the smaller wave numbers, and small numerical errors prevent the succesful
evaluation of the solution of even the simplified system of non-linear equations resulting from the algebraic
simplifications we have shown here. In order to tackle this problem, we are working in two new improvements: an alternative equivalent new
system, and alternative methods for the solution of the optimization problem (including genetic algorithms).

Currently, we have reached acceleration factors that have made that the SRJ methods become competitive (depending on the
dimensionality of the problem and its size) with, e.g., spectral methods for the solution of some ePDEs. In particular,
we have made the comparison in the case of an astrophysical problem that we are interested in for whose solution we were
using spectral methods. We find that for 1D Poisson-like problems, the fastest method of solution is the direct
inversion method implemented in {\tt LAPACK}. This happens because the LU decomposition of the matrix solver, where most
of the computational work is done, needs to be performed once, and the it can be stored for the rest of the
evolution. In 2D, the best performing method depends on whether our initial guess is close to the actual solution or far
off. In realistic applications, where ePDEs are coupled to systems of hyperbolic PDEs, the solution from a previous time
iteration does not change significantly over the course fo a single timestep. In such conditions, the {\tt LAPACK}
libraries are the best performing. However, spectral methods are advantageous if, in 2D, the initial values are far from
the actual solution of the problem. We further note that in realistic coupled systems, and for a relatively large number
of points per dimension ($N>500$), the SRJ methods are competitive with spectral ones. In 3D applications, we find that
the total computational cost of SRJ methods scales in 3D as $N^4$, i.e, as in the case of spectral
methods. Considering that (i) applying direct inversion methods to 3D problems is unfeasible because of memory
restrictions, and that (ii) SRJ methods can be parallelized straightforwardly (much more easily than, e.g., spectral or
multigrid methods), we foresee that they are a competitive alternative for the solution of elliptic problems in
supercomputing applications and in 3D. We are studying improvements in the method from this point of view. Finally, we
outline that the easy implementation of complex boundary conditions in SRJ methods is also an advantage with respect to
other existing alternatives.  

\section*{Acknowledgements}
We thank P.\,Mimica for bringing the paper YM14 to our attention. We acknowledge the support from the European Research
Council (Starting Independent Researcher Grant CAMAP-259276), and the partial support of grants AYA2013-40979-P,
PROMETEO-II-2014-069 and SAF2013-49284-EXP. We thankfully acknowledge the computer resources, technical expertise and
assistance provided by the Servei de Inform\`atica of the University of Valencia.
%
%\newpage
\appendix

\section{Compendium of parameters of optimal SRJ schemes}
\label{sec:P2_P13}

\begin{table}
\caption{Parameters for optimized $P=6$ SRJ schemes for various values of $N$.}
\vspace{-0.cm}
\begin{center}
%\bigskip
\scriptsize
% [inline block 0: 8 envs, 67763 chars -> data_tex | \begin{tabular}{c c c c} \hline...]


\label{table:weiP13_2}

\end{center}
\end{table}

%\
%% If you have bibdatabase file and want bibtex to generate the
%% bibitems, please use
%%
\newpage
\bibliographystyle{elsarticle-num} 
\bibliography{biblio}

\end{document}